\numberwithin{equation}{section}
\numberwithin{figure}{section}
\theoremstyle{plain}
\newtheorem{thm}{\protect\theoremname}[section]
  \theoremstyle{remark}
  \newtheorem*{rem*}{\protect\remarkname}
  \theoremstyle{plain}
  \newtheorem{prop}[thm]{\protect\propositionname}
  \theoremstyle{plain}
  \newtheorem{cor}[thm]{\protect\corollaryname}
  \theoremstyle{definition}
  \newtheorem{defn}[thm]{\protect\definitionname}
  \theoremstyle{remark}
  \newtheorem*{acknowledgement*}{\protect\acknowledgementname}
  \theoremstyle{plain}
  \newtheorem{lem}[thm]{\protect\lemmaname}
  \theoremstyle{remark}
  \newtheorem{rem}[thm]{\protect\remarkname}
\newcommand{\e}{\mathrm{e}}
\newcommand{\1}{1}
\newcommand{\N}{\mathbb{N}}
\newcommand{\R}{\mathbb{R}}
\newcommand{\C}{\mathbb{C}}
\renewcommand{\Pi}{\pi}
\renewcommand{\emptyset}{\varnothing}
\renewcommand{\hat}{\widehat}
\newcommand\diam{\mathrm{diam}}
\DeclareMathOperator*{\Int}{Int}
\newcommand\id{\mathrm{id}}
\newcommand\Chat{\hat{\mathbb{C}}}
\DeclareMathOperator*{\card}{card}
\DeclareMathOperator*{\supp}{supp}
\DeclareMathOperator*{\Aut}{Aut}
\DeclareMathOperator*{\CV}{CV}
\DeclareMathOperator*{\Hol}{\textnormal{H\"ol}}
\newcommand{\M}{\mathit{Q}}
\DeclareMathOperator*{\Rat}{Rat}
  \providecommand{\acknowledgementname}{Acknowledgement}
  \providecommand{\corollaryname}{Corollary}
  \providecommand{\definitionname}{Definition}
  \providecommand{\lemmaname}{Lemma}
  \providecommand{\propositionname}{Proposition}
  \providecommand{\remarkname}{Remark}
\providecommand{\theoremname}{Theorem}
\begin{document}

\title{Multifractal Formalism for Expanding Rational Semigroups and Random
Complex Dynamical Systems}

\author{Johannes Jaerisch and Hiroki Sumi}

\date{July 12, 2015. Published in Nonlinearity {\bf 28} (2015) 2913-2938.}

\thanks{\ \newline \noindent Johannes Jaerisch (corresponding author) \newline  Department of Mathematics,  Faculty of Science and Engineering, 
Shimane University, Nishikawatsu 1060, Matsue, Shimane, 690-8504 Japan \newline E-mail: jaerisch@riko.shimane-u.ac.jp\ \ Web: http://www.math.shimane-u.ac.jp/$\sim$jaerisch/  \newline Tel: +81-(0)8-5232-6377 \newline \ \newline \noindent Hiroki Sumi \newline Department of Mathematics, Graduate School of Science, Osaka University, 1-1 Machikaneyama, Toyonaka, Osaka, 560-0043, Japan\newline E-mail: sumi@math.sci.osaka-u.ac.jp\ \ Web: http://www.math.sci.osaka-u.ac.jp/$\sim$sumi/\newline Tel: +81-(0)6-6850-5307\ \ Fax: +81-(0)6-6850-5327}

\keywords{Complex dynamical systems, rational semigroups, random complex dynamics,
multifractal formalism, Julia set, Hausdorff dimension, random iteration,
iterated function systems, self-similar sets}
\begin{abstract}
We consider the multifractal formalism for the dynamics of semigroups
of rational maps on the Riemann sphere and random complex dynamical
systems. We elaborate a multifractal analysis of level sets given
by quotients of Birkhoff sums with respect to the skew product associated
with a semigroup of rational maps. Applying these results, we perform
a multifractal analysis of the H\"older regularity of limit state
functions of random complex dynamical systems. 
\end{abstract}
\maketitle

\section{Introduction and Statement of Results}

Let $\Rat$ denote the set of all non-constant rational maps on the
Riemann sphere $\Chat$. A subsemigroup of $\Rat$ with semigroup
operation being functional composition is called a \emph{rational
semigroup}. The first study of dynamics of rational semigroups was
conducted by A. Hinkkanen and G. J. Martin (\cite{MR1397693}), who
were interested in the role of polynomial semigroups (i.e., semigroups
of non-constant polynomial maps) while studying various one-complex-dimensional
module spaces for discrete groups, and by F. Ren's group (\cite{MR1435167}),
who studied such semigroups from the perspective of random dynamical
systems. The first study of random complex dynamics was given by J.
E. Fornaess and N. Sibony (\cite{MR1145616}). For the motivations
and the recent studies  on random complex dynamics, see the second
author's work \cite{s11random,S13Coop}. 

The study of multifractals goes back to the work of \cite{Mandelbrot:74,FrischParisi:85,Halsey:86}
which was motivated by statistical physics. In this paper, we perform
a multifractal analysis of level sets given by quotients of Birkhoff
sums with respect to the skew product associated with a rational semigroup.
We recommend \cite{MR1435198,pesindimensiontheoryMR1489237} for a
similar kind of multifractal analysis for conformal repellers. 

One of our main motivations to develop the mutifractal formalism for
rational semigroups is to apply our results to random complex dynamical
systems. The multifractal formalism allows us to investigate level
sets of a prescribed H\"older regularity of limit state functions
(linear combinations of unitary eigenfunctions of transition operators)
of random complex dynamical systems. In this way, our multifractal
analysis exhibits a refined gradation between chaos and order for
random complex dynamical systems, which has been recently studied
by the second author in \cite{s11random,S13Coop}.  We remark that this paper is the first one in which the multifractal formalism is applied to
the study of limit state functions of random complex dynamical systems.
Also, we note that under certain conditions such a limit state function
is continuous on $\Chat$ but varies precisely on the 
Julia set of the associated rational semigroup, which is a thin fractal set.
From this point of view, this study is deeply related to both random
dynamics and fractal geometry. 

Throughout this paper, we will always assume that $I$ is a finite set.
An element $f=\left(f_{i}\right)_{i\in I}\in\left(\Rat\right)^{I}$
is called a \emph{multi-map}. 
We say that \foreignlanguage{english}{$f=\left(f_{i}\right)_{i\in I}$}
is \emph{exceptional} if $\card\left(I\right)=1$ and $\deg\left(f_{i}\right)=1$
for $i\in I$. Throughout, we always assume that $f$ is non-exceptional. 
Let $f=\left(f_{i}\right)_{i\in I}$ be a  multi-map and
let \foreignlanguage{english}{$G=\left\langle f_{i}:i\in I\right\rangle $}, where $\left\langle f_{i}:i\in I\right\rangle$ denotes the rational semigroup generated by   $\left(f_{i}\right)_{i\in I}$, i.e., $G=\{f_{i_1}\circ \dots \circ f_{i_n}\mid  n\in \N, \,\,i_1,\dots ,i_n \in I \}$. Let $\left(p_{i}\right)_{i\in I}$ be a probability vector and let
$\tau:=\sum_{i\in I}p_{i}\delta_{f_{i}}$ where $\delta_{f_{i}}$
refers to the Dirac measure supported on $f_{i}$. We  always assume
that $0<p_{i}<1$ for each $i\in I$.  We consider the i.i.d. random
dynamical system associated with $\tau$, i.e., at every step we choose
a map $f_{i}$ with probability $p_{i}$. We denote by $C\bigl(\Chat\bigr)$
the Banach space of complex-valued continuous functions on $\Chat$
endowed with the supremum norm. The transition operator $M_{\tau}:C\bigl(\Chat\bigr)\rightarrow C\bigl(\Chat\bigr)$
is the bounded linear operator  given by $M_{\tau}\left(\varphi\right)\left(z\right):=\sum_{i\in I}\varphi\left(f_{i}\left(z\right)\right)p_{i}$,  for each $\varphi\in C\bigl(\Chat\bigr)$ and $z\in\Chat$.
A function $\rho\in C\bigl(\Chat\bigr)$ with $\rho \not\equiv 0$ is called a unitary eigenfunction
of\foreignlanguage{english}{ $M_{\tau}$} if there exists $u\in\C$
with $\left|u\right|=1$ such that $M_{\tau}\left(\rho\right)=u\rho$.
Let $U_{\tau}$ be the space of all linear combinations of unitary
eigenfunctions of $M_{\tau}:C\bigl(\Chat\bigr)\rightarrow C\bigl(\Chat\bigr)$. To investigate the regularity of the elements in $U_{\tau}$ we consider the following quantities.
\begin{defn}
\label{def:QRH}Let $\rho:\Chat\rightarrow\C$ be a bounded function and let
$z\in\Chat$. We set 
\[
\M_{*}\left(\rho,z\right):=\liminf_{r\rightarrow0}\frac{\log\M\left(\rho,z,r\right)}{\log r},\;\M^{*}\left(\rho,z\right):=\limsup_{r\rightarrow0}\frac{\log\M\left(\rho,z,r\right)}{\log r}\;\mbox{and}\;\M\left(\rho,z\right):=\lim_{r\rightarrow0}\frac{\log\M\left(\rho,z,r\right)}{\log r},
\]
where $\M\left(\rho,z,r\right)$ is for $r>0$ given by 
\[
\M\left(\rho,z,r\right):=\sup_{y\in B\left(z,r\right)}\left|\rho\left(y\right)-\rho\left(z\right)\right|.
\]
Moreover, we define for each $\alpha\in\R$ the corresponding level
sets
\[
R_{*}\left(\rho,\alpha\right):=\left\{ y\in\Chat:\M_{*}\left(\rho,y\right)=\alpha\right\} ,\; R^{*}\left(\rho,\alpha\right):=\left\{ y\in\Chat:\M^{*}\left(\rho,y\right)=\alpha\right\} 
\]
and 
\[
R\left(\rho,\alpha\right):=\left\{ y\in\Chat:\M\left(\rho,y\right)=\alpha\right\} .
\]
Let $d$ denote the spherical distance on $\Chat$. The pointwise H\"older
exponent \foreignlanguage{english}{$\Hol\left(\rho,z\right)$} of
$\rho$ at $z$ is given by 
\[
\Hol\left(\rho,z\right):=\sup\left\{ \beta\in[0,\infty):\limsup_{y\rightarrow z,y\neq z}\frac{\left|\rho\left(y\right)-\rho\left(z\right)\right|}{d\left(y,z\right)^{\beta}}<\infty\right\} \in\left[0,\infty\right].
\]
The level set $H\left(\rho,\alpha\right)$ with prescribed H\"older
exponent $\alpha\in\R$ is given by 
\[
H\left(\rho,\alpha\right):=\left\{ y\in\Chat:\Hol\left(\rho,y\right)=\alpha\right\} .
\]
\end{defn}
In fact, we will show in Lemma \ref{lem:hoelderexponent-via-Q} that $\Hol(\rho,z)=Q_{*}(\rho,z)$ for every $z\in \Chat$. We refer to Section \ref{sec:Application-to-Random} for the proof of this fact and for further properties of the quantities introduced in Definition \ref{def:QRH}.

We proceed by introducing the necessary  preliminaries
to state our first main result. The \emph{Fatou set} $F\left(G\right)$ and the \emph{Julia set} $J\left(G\right)$
of a rational semigroup $G$ are given by 
\[
F\left(G\right):=\left\{ z\in\Chat:G\mbox{ is normal in a neighbourhood of }z\right\} \quad\mbox{ and }\quad J\left(G\right):=\Chat\setminus F\left(G\right).
\]
If  $G$ is a rational semigroup generated by   a single map $g\in\Rat$, then we write $G=\left\langle g\right\rangle $. Moreover, for a single map $g\in\Rat$, we set $F\left(g\right):=F\left(\left\langle g\right\rangle \right)$
and $J\left(g\right):=J\left(\left\langle g\right\rangle \right)$. 

Let $f=\left(f_{i}\right)_{i\in I}$ be a  multi-map. The skew product associated with the  multi-map
$f=\left(f_{i}\right)_{i\in I}$ is given by 
\[
\tilde{f}:I^{\N}\times\hat{\C}\rightarrow I^{\N}\times\hat{\C},\quad\tilde{f}\left(\omega,z\right):=\left(\sigma\left(\omega\right),f_{\omega_{1}}\left(z\right)\right),
\]
where $\sigma:I^{\N}\rightarrow I^{\N}$ denotes the left-shift map
given by $\sigma\left(\omega_{1},\omega_{2},\dots\right):=\left(\omega_{2},\omega_{3},\dots\right)$,
for $\omega=\left(\omega_{1},\omega_{2},\dots\right)\in I^{\N}$.
We say that a multi-map $f=\left(f_{i}\right)_{i\in I}$ is \emph{expanding}
if the associated skew product $\tilde{f}$ is expanding along fibres
on the Julia set $J\left(\tilde{f}\right)$ (see Definition \ref{def:expanding}). 

We say that $\psi=\left(\psi_{i}\right)_{i\in I}$
is a \emph{H\"older family associated with  the multi-map} $f=\left(f_{i}\right)_{i\in I}$
if $\psi_{i}:f_{i}^{-1}\left(J\left(G\right)\right)\rightarrow\R$
is H\"older continuous for each $i\in I$, where $G=\left\langle f_{i}:i\in I\right\rangle $
and $J\left(G\right)$ is equipped with the metric inherited from
the spherical distance $d$ on $\Chat.$ 
Note that $\cup _{i\in I}f_{i}^{-1}(J(G))=J(G)$ (\cite[Lemma 2.4]{MR1767945}). 
If it is clear from the context
with which multi-map $\psi$ is associated, then we simply say that
$\psi$ is a H\"older family. For a H\"older family $\psi=\left(\psi_{i}\right)_{i\in I}$,
we define $\tilde{\psi}:J\left(\tilde{f}\right)\rightarrow\R$ given
by $\tilde{\psi}\left(\omega,z\right):=\psi_{\omega_{1}}\left(z\right)$,
for all $\omega=\left(\omega_{1},\omega_{2},\dots\right)\in I^{\N}$
and $z\in f_{\omega_{1}}^{-1}\left(J\left(G\right)\right)$, and for
each $n\in\N$ we denote by $S_{n}\tilde{\psi}:J\left(\tilde{f}\right)\rightarrow\R$
the Birkhoff sum of $\tilde{\psi}$ with respect to $\tilde{f}$ given
by $S_{n}\tilde{\psi}:=\sum_{i=0}^{n-1}\tilde{\psi}\circ\tilde{f}^{i}$. 

For an expanding multi-map $f=\left(f_{i}\right)_{i\in I}$, let $\zeta=\left(\zeta_{i}:f_{i}^{-1}\left(J\left(G\right)\right)\rightarrow\R\right)_{i\in I}$
be the H\"older family given by $\zeta_{i}\left(z\right):=-\log\left\Vert f_{i}'\left(z\right)\right\Vert $
for each $i\in I$ and $z\in f_{i}^{-1}\left(J\left(G\right)\right)$,
where $\Vert\cdot\Vert$ denotes the norm of the derivative with respect
to the spherical metric on $\Chat$. Let $\pi_{\Chat}:I^{\N}\times\Chat\rightarrow\Chat$
denote the canonical projection. We  define the level sets $\mathcal{F}\left(\alpha,\psi\right)$,
 which are for $\alpha\in\R$ given by
\[
\mathcal{F}\left(\alpha,\psi\right):=\pi_{\Chat}\left(\tilde{\mathcal{F}}\left(\alpha,\psi\right)\right),\mbox{ where }\tilde{\mathcal{F}}\left(\alpha,\psi\right):=\biggl\{ x\in J\left(\tilde{f}\right):\lim_{n\rightarrow\infty}\frac{S_{n}\tilde{\psi}\left(x\right)}{S_{n}\tilde{\zeta}\left(x\right)}=\alpha\biggr\}.
\]
The \emph{(Hausdorff-) dimension spectrum $l$ of $\left(f,\psi\right)$}
is given by 
\[
l\left(\alpha\right):=\dim_{H}\left(\mathcal{F}\left(\alpha,\psi\right)\right),\mbox{ for }\alpha\in\R.
\]
The range of the multifractal spectrum is given by 
\[
\alpha_{-}\left(\psi\right):=\inf\left\{ \alpha\in\R:\mathcal{F}\left(\alpha,\psi\right)\neq\emptyset\right\} \quad\mbox{and }\quad\alpha_{+}\left(\psi\right):=\sup\left\{ \alpha\in\R:\mathcal{F}\left(\alpha,\psi\right)\neq\emptyset\right\} .
\]

The \emph{free energy function for $\left(f,\psi\right)$} is the
unique function $t:\R\rightarrow\R$ such that $\mathcal{P}\bigl(\beta\tilde{\psi}+t\left(\beta\right)\tilde{\zeta},\tilde{f}\bigr)=0$
for each $\beta\in\R$, where $\mathcal{P}\left(\cdot,\tilde{f}\right)$
denotes the topological pressure with respect to $\tilde{f}$ (\cite{MR648108}).
The number $t\left(0\right)$ is also referred to as the \emph{critical
exponent} $\delta$ of $f$. The convex conjugate of $t$ (\cite[Section 12]{rockafellar-convexanalysisMR0274683})
is given by 
\[
t^{*}:\R\rightarrow\R\cup\left\{ \infty\right\} ,\quad t^{*}\left(c\right):=\sup_{\beta\in\R}\left\{ \beta c-t\left(\beta\right)\right\} ,\quad c\in\R.
\]
We say that $f=\left(f_{i}\right)_{i\in I}$ satisfies the \emph{separation condition}
if $f_{i}^{-1}\left(J\left(G\right)\right)\cap f_{j}^{-1}\left(J\left(G\right)\right)=\emptyset$
for all $i,j\in I$ with $i\neq j$, where $G:=\left\langle f_{i}:i\in I\right\rangle $.
Note that in this case, under the assumption $J(G)\neq \emptyset $, for any probability vector $\left(p_{i}\right)_{i\in I}\in\left(0,1\right)^{I}$,
setting $\tau:=\sum_{i\in I}p_{i}\delta_{f_{i}}$, we have that (1)
$1\le\dim_{\C}\left(U_{\tau}\right)<\infty$ and (2) there exists
a bounded linear operator $\pi_{\tau}:C\bigl(\Chat\bigr)\rightarrow U_{\tau}$
such that, for each $\varphi\in C\bigl(\Chat\bigr)$, we have $\Vert M_{\tau}^{n}\left(\varphi-\pi_{\tau}\left(\varphi\right)\right)\Vert\rightarrow0$,
as $n$ tends to infinity (see \cite[Theorem 3.15]{s11random}). If
an element $\rho\in U_{\tau}$ is non-constant, then $\rho$ is continuous
on $\Chat$ and the set of varying points of $\rho$ is equal to the
thin fractal set $J(G)$ (for the figure of the graph of such a function
$\rho$, see \cite{s11random}). Such $\rho$ is considered as a complex
analogue of a devil's staircase or Lebesgue's singular function. Some
of such functions $\rho$ are called devil's coliseums (see \cite{s11random}). Our first main result shows that, for every $\rho \in U_{\tau}$ non-constant,  the level sets $R_{*}$, \foreignlanguage{english}{$R$,
$R^{*}$ and} $H$ satisfy the multifractal formalism.
\begin{thm}
[Theorem \ref{thm:mf-for-hoelderexponent}]  \label{thm:-regularity-intro}Let
$f=\left(f_{i}\right)_{i\in I}$ be an expanding multi-map and let
\foreignlanguage{english}{\textup{$G=\left\langle f_{i}:i\in I\right\rangle $.}}
Suppose that $f$ satisfies the separation condition. Let $\left(p_{i}\right)_{i\in I}\in\left(0,1\right)^{I}$
be a probability vector and let $\tau:=\sum_{i\in I}p_{i}\delta_{f_{i}}$.
Suppose there exists a non-constant function belonging to $U_{\tau}$.
Let $\rho\in C\bigl(\Chat\bigr)$ be a non-constant function belonging
to $U_{\tau}$. Let $\psi=\left(\psi_{i}:f_{i}^{-1}\left(J\left(G\right)\right)\rightarrow\R\right)_{i\in I}$
be given by $\psi_{i}\left(z\right):=\log p_{i}$. Let $t:\R\rightarrow\R$
denote the free energy function for  $\left(f,\psi\right)$. Then
we have the following.
\begin{enumerate}
\item There exists a number $a\in\left(0,1\right)$ such that $\rho:\Chat\rightarrow\C$
is $a$-H\"older continuous and $a\le\alpha_{-}\left(\psi\right)$.
\selectlanguage{english}%
\item \textup{\emph{We have $\alpha_{+}\left(\psi\right)=\sup\left\{ \alpha\in\R:H\left(\rho,\alpha\right)\neq\emptyset\right\} $
and $\alpha_{-}\left(\psi\right)=\inf\left\{ \alpha\in\R:H\left(\rho,\alpha\right)\neq\emptyset\right\} $.
Moreover, $H$ can be replaced by $R_{*},R$ or $R^{*}$. }}
\selectlanguage{british}%
\item Let $\alpha_{\pm}:=\alpha_{\pm}\left(\psi\right)$. If $\alpha_{-}<\alpha_{+}$
then we have for each $\alpha\in\left(\alpha_{-},\alpha_{+}\right)$,
\[
\dim_{H}\left(R^{*}\left(\rho,\alpha\right)\right)=\dim_{H}\left(R_{*}\left(\rho,\alpha\right)\right)=\dim_{H}\left(R\left(\rho,\alpha\right)\right)=\dim_{H}\left(H\left(\rho,\alpha\right)\right)=-t^{*}\left(-\alpha\right)>0.
\]
 Moreover, $s(\alpha ):=-t^{*}\left(-\alpha\right)$ defines a real
analytic and strictly concave positive function on $\left(\alpha_{-},\alpha_{+}\right)$
with maximum value $\delta>0$. Also, $s''<0$ on $(\alpha _{-}, \alpha _{+}).$ 
\item 

\begin{enumerate}
\item For each $i\in I$ we have $\deg\left(f_{i}\right)\ge2$. Moreover,
we have $\alpha_{-}=\alpha_{+}$ if and only if there exists an automorphism
$\varphi\in\Aut\bigl(\Chat\bigr)$, complex numbers $\left(a_{i}\right)_{i\in I}$
and $\lambda\in\R$ such that for all $i\in I$, 
\[
\varphi\circ f_{i}\circ\varphi^{-1}\left(z\right)=a_{i}z^{\pm\deg\left(f_{i}\right)}\quad\mbox{and}\quad\log\deg\left(f_{i}\right)=\lambda\log p_{i}.
\]

\item If $\alpha_{-}=\alpha_{+}$ then we have
\[
R^{*}\left(\rho,\alpha_{-}\right)=R_{*}\left(\rho,\alpha_{-}\right)=R\left(\rho,\alpha_{-}\right)=H\left(\rho,\alpha_{-}\right)
=J(G),
\]
where $\dim_H(J(G))=\delta>0$ and  $R^{*}\left(\rho,\alpha\right)=R_{*}\left(\rho,\alpha\right)=R\left(\rho,\alpha\right)=H\left(\rho,\alpha\right)=\emptyset$
for all $\alpha\neq\alpha_{-}$.
\end{enumerate}
\end{enumerate}
\end{thm}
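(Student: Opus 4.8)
My plan is to build the whole argument around a \emph{transfer principle} that converts the pointwise regularity of a non-constant $\rho\in U_{\tau}$ --- measured by $\M_{*},\M^{*},\M$, and via Lemma~\ref{lem:hoelderexponent-via-Q} also by $\Hol(\rho,\cdot)=\M_{*}(\rho,\cdot)$ --- into a statement about the Birkhoff quotients $S_{n}\tilde\psi/S_{n}\tilde\zeta$ for the skew product $\tilde f$; once that is in hand, the theorem should follow by combining the multifractal analysis of the level sets $\tilde{\mathcal F}(\alpha,\psi)$ and the thermodynamic formalism for $\tilde f$, which I take to be available from the earlier sections, with a rigidity argument for the degenerate case. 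To set up the transfer principle I would iterate the eigenvalue relations on the finitely many unitary eigenspaces and extend linearly, obtaining $u^{n}\rho(z)=\sum_{\omega\in I^{n}}p_{\omega}\,\rho(\pi_{\Chat}(\tilde f^{n}(\omega,z)))$ with $|u|=1$ and $p_{\omega}=\prod_{j}p_{\omega_{j}}$; this exhibits $\rho$, near a point of $J(G)$, as a $p_{\omega}$-weighted average of its own bounded values up to unimodular twists. Coupling this with bounded distortion of the inverse branches --- available because $f$ is expanding along fibres and satisfies the separation condition, the latter also making the relevant cylinders disjoint --- I expect to show that for $x=(\omega,z)\in J(\tilde f)$ the oscillation $\M(\rho,z,r)$ at scale $r=\exp(S_{n}\tilde\zeta(x))$ is comparable, uniformly in $x$ and $n$, to $\exp(S_{n}\tilde\psi(x))$; taking logarithms then identifies $\M_{*}(\rho,z),\M^{*}(\rho,z),\M(\rho,z)$ with the lower, upper and ordinary limits of $S_{n}\tilde\psi/S_{n}\tilde\zeta$ along the backward orbits of $z$. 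Off $J(G)$ one uses that $\rho$ is locally constant (its set of non-constancy being $J(G)$), so that $\M_{*}(\rho,z)=+\infty=\Hol(\rho,z)$ there and hence $R_{*}(\rho,\alpha),R(\rho,\alpha),R^{*}(\rho,\alpha),H(\rho,\alpha)\subseteq J(G)$ for every finite $\alpha$.

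For \emph{part~(1)} I would observe that $\tilde\psi$ is bounded and bounded away from $0$ (since $0<p_{i}<1$), while $|S_{n}\tilde\zeta|$ grows at least linearly in $n$ on $J(\tilde f)$ and $|S_{n}\tilde\psi|$ at most linearly; this forces $S_{n}\tilde\psi/S_{n}\tilde\zeta\ge\kappa>0$ for some constant $\kappa$, hence $\alpha_{-}(\psi)\ge\kappa>0$ and $\M_{*}(\rho,z)\ge\alpha_{-}(\psi)$ for all $z$, so by Lemma~\ref{lem:hoelderexponent-via-Q} the function $\rho$ is globally $a$-H\"older for any $a\in(0,\min\{1,\alpha_{-}(\psi)\})$.

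For \emph{parts~(2) and~(3)} I would invoke the multifractal analysis of the skew product --- $l(\alpha)=\dim_{H}\mathcal F(\alpha,\psi)=-t^{*}(-\alpha)$ on $(\alpha_{-},\alpha_{+})$, with $s:=-t^{*}(-\cdot)$ real-analytic, strictly concave ($s''<0$), positive, and with maximum $t(0)=\delta=\dim_{H}J(G)>0$ --- which itself rests on a conditional variational principle and a Bowen-type covering argument together with the real-analyticity and strict convexity of $t$ (implicit function theorem for $\mathcal P(\beta\tilde\psi+t(\beta)\tilde\zeta,\tilde f)=0$, strict convexity because the degenerate case is excluded on the open interval). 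The transfer principle should then carry these statements over to the level sets of $\rho$: a Gibbs/equilibrium state for $\beta\tilde\psi+t(\beta)\tilde\zeta$ with $\beta$ tuned to $\alpha$ pushes forward to a measure supported in $R(\rho,\alpha)$ of dimension $-t^{*}(-\alpha)$, giving $\dim_{H}R(\rho,\alpha)\ge-t^{*}(-\alpha)$, while the matching upper bounds for $R_{*},R,R^{*}$ follow by containing these sets (intersected with $J(G)$) in projections of the corresponding $\liminf$/$\limsup$ level sets of $\tilde f$, which have dimension $-t^{*}(-\alpha)$ for interior $\alpha$; squeezing and using $H=R_{*}$ then gives~(3). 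For~(2), $\alpha_{\pm}(\psi)$ are by definition the extremal values of $\lim_{n}S_{n}\tilde\psi/S_{n}\tilde\zeta$ realised in $J(\tilde f)$ (attained, e.g., along orbits prescribing the fibrewise ergodic averages), and the transfer principle turns these into the extremal finite values of $\M(\rho,\cdot)$, hence of $\M_{*},\M^{*}$ and $\Hol(\rho,\cdot)$, so that $\sup$ and $\inf$ of $\{\alpha:H(\rho,\alpha)\neq\emptyset\}$ equal $\alpha_{+}(\psi)$ and $\alpha_{-}(\psi)$, and likewise with $R_{*},R,R^{*}$.

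For \emph{part~(4)} I would first dispose of $\deg(f_{i})\ge2$ by a short argument: a degree-one generator is a M\"obius map and cannot be uniformly expanding along fibres on the infinite relevant part of $J(\tilde f)$, which is nonempty since $J(G)\neq\emptyset$ (because $\rho$ is non-constant) and $\pi_{\Chat}(J(\tilde f))=J(G)$. The equivalence in~(4)(a) I would route through $t$: $\alpha_{-}=\alpha_{+}$ iff the spectrum collapses to a point iff $t$ is affine iff --- by the strict-convexity dichotomy for the pressure together with $\mathcal P(\delta\tilde\zeta,\tilde f)=0$ --- the potential $\tilde\psi$ is cohomologous to a constant multiple of $\tilde\zeta$ for $\tilde f$; evaluating this cohomology relation along periodic orbits shows that for each $i$ the derivative $\|(f_{i}^{m})'\|$ is constant on the $m$-periodic points of $f_{i}$ in $J(G)$ for every $m$, and a Zdunik/Sumi--Urba\'nski-type rigidity theorem for expanding rational semigroups then forces all $f_{i}$ to be simultaneously conjugate, by a single $\varphi\in\Aut(\Chat)$, to power maps $a_{i}z^{\pm\deg(f_{i})}$; reading off the cohomology constant (using that $\|f_{i}'\|$ equals $\deg(f_{i})$ on the resulting circular Julia set) gives $\log\deg(f_{i})=\lambda\log p_{i}$, and the converse is a direct computation with power maps. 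Finally, when $\alpha_{-}=\alpha_{+}$ the coboundary relation makes $S_{n}\tilde\psi-\alpha_{-}S_{n}\tilde\zeta$ uniformly bounded, so $S_{n}\tilde\psi/S_{n}\tilde\zeta\to\alpha_{-}$ uniformly on $J(\tilde f)$, and by the transfer principle $\M(\rho,z)=\alpha_{-}$ for every $z\in J(G)$, whence $R^{*}(\rho,\alpha_{-})=R_{*}(\rho,\alpha_{-})=R(\rho,\alpha_{-})=H(\rho,\alpha_{-})=J(G)$ with $\dim_{H}J(G)=\delta$, while off $J(G)$ the local constancy of $\rho$ makes all these level sets empty for $\alpha\neq\alpha_{-}$. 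The hard part will be this rigidity classification in~(4)(a) --- passing from the cohomological triviality of $\log p_{\omega_{1}}-\lambda\log\|f_{\omega_{1}}'\|$, a fibrewise Liv\v{s}ic-type condition, to the simultaneous conjugacy to power maps --- together with making the distortion estimates underlying the transfer principle uniform under only the separation condition.
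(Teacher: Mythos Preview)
Your overall architecture matches the paper's: the ``transfer principle'' you outline is precisely Lemma~\ref{lem:Q-via-ergodicsums}, and once that is established the chain of inclusions $\mathcal F(\alpha,\psi)\subset R_{*}(\rho,\alpha)\subset\mathcal F^{\sharp}(\alpha,\psi)$ feeds directly into Theorem~\ref{thm:multifractalformalism} and Proposition~\ref{lem:degenerate-via-zdunik} just as you describe. Two minor differences: the paper obtains the global $a$-H\"older continuity of $\rho$ by citing \cite[Theorem~3.29]{S13Coop} rather than extracting it from the transfer principle (your route is fine, but note that a uniform lower bound on the pointwise H\"older exponent alone does not give global H\"older continuity --- you need the uniform constant in $\M(\rho,z,r)\le Cr^{\kappa}$, which your transfer principle does supply); and the paper explicitly passes to $(f_{\omega})_{\omega\in I^{k}}$ for large $k$ in order to upgrade the separation condition to the open set condition required by Theorem~\ref{thm:multifractalformalism}.

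There is, however, a genuine gap in your argument for $\deg(f_{i})\ge2$. You claim that a degree-one generator ``cannot be uniformly expanding along fibres on the infinite relevant part of $J(\tilde f)$''. This is false: Proposition~\ref{prop:expandingness-criteria} shows that loxodromic M\"obius transformations can appear as generators of expanding semigroups (indeed, an expanding semigroup may consist entirely of loxodromic elements of $\Aut(\Chat)$). Expandingness together with the separation condition does \emph{not} by itself force $\deg(f_{i})\ge2$. The paper's argument is different and uses the hypothesis you have not yet touched for this step, namely the existence of a non-constant $\rho\in U_{\tau}$: the separation condition together with $J(G)=\bigcup_{i}f_{i}^{-1}(J(G))$ gives that the kernel Julia set $J_{\ker}(G)=\bigcap_{g\in G}g^{-1}(J(G))$ is empty, and then \cite[Theorem~3.15(21), Remark~3.18]{s11random} shows that under $J_{\ker}(G)=\emptyset$ the existence of a non-constant unitary eigenfunction forces $\deg(f_{i})\ge2$ for every $i$. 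You should replace your expandingness argument by this one. (A related small point: your iterated eigenvalue formula is written for a single eigenfunction; for a general linear combination the paper reduces to the case $M_{\tau}\rho=\rho$ via \cite[Theorem~3.15(10)]{s11random}, which is what makes the two-sided oscillation estimate~(\ref{eq:functional-equation}) an equality rather than merely an inequality.)
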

We denote by $C^{\alpha}(\Chat):=\{\varphi:\Chat \rightarrow \C \mid \Vert\varphi\Vert_{\alpha}<\infty \}$ the Banach space of  $\alpha$-H\"older continuous
functions on $\Chat$ endowed with the $\alpha$-H\"older norm 
\[
\Vert\varphi\Vert_{\alpha}:=\sup_{z\in\Chat}\left|\varphi\left(z\right)\right|+\sup_{x,y\in\Chat,x\neq y}\left|\varphi\left(x\right)-\varphi\left(y\right)\right|/d(x,y)^{\alpha}.
\]
Under the assumptions of Theorem \ref{thm:-regularity-intro} and
some additional conditions, we have $\alpha_{-}<1$ (see the Remark
in Section \ref{sec:Examples}). In this case Theorem \ref{thm:-regularity-intro}  implies
that for each $\alpha\in\left(\alpha_{-},1\right)$ the iteration
of the transition operator $M_{\tau}$ does not behave well on the
Banach space $C^{\alpha}(\Chat)$, 
i.e., there exists an element $\varphi\in C^{\alpha}(\Chat)$ such
that $\Vert M_{\tau}^{n}(\varphi)\Vert_{\alpha}\rightarrow\infty$
as $n\rightarrow\infty$. It means that, regarding the iteration of the transition operator $M_{\tau}$ on functions spaces,  even though the chaos disappears
on $C(\Chat)$ and $C^{a}(\Chat)$, we still have a kind of complexity
(or chaos) on the space $C^{\alpha}(\Chat)$ for each $\alpha\in(\alpha_{-},1)$.
Thus, in random complex dynamical systems we sometimes have a kind
of gradation between chaos and order. Theorem \ref{thm:-regularity-intro}
can be seen as a refinement of this gradation. In Section \ref{sec:Examples}
we give many examples to which we can apply Theorem \ref{thm:-regularity-intro}.

In order to show our first main result, we prove the general multifractal formalism for level sets given
by quotients of Birkhoff sums with respect to the skew product associated
with a semigroup of rational maps.   We say that a multi-map  $f=\left(f_{i}\right)_{i\in I}$ satisfies the \emph{open
set condition} if there exists a non-empty open set $U$ in $\Chat$
such that $\left\{ f_{i}^{-1}\left(U\right):i\in I\right\} $ consists
of pairwise disjoint subsets of $U$. Note that the open set condition is weaker than the separation condition, since the higher iterates of an expanding multi-map satisfying the separation condition also satisfy the open set condition.   

Our second  main result shows that, for an expanding multi-map  satisfying
only the open set condition and for an arbitrary H\"older family $\psi$,
the level sets $\mathcal{F}\left(\alpha,\psi\right)$ satisfy the
multifractal formalism. 
\begin{thm}
[Theorem \ref{thm:multifractalformalism}]  \label{thm:-mf-intro}Let
$f=\left(f_{i}\right)_{i\in I}$ be an expanding multi-map which satisfies
the open set condition. Let $\psi=\left(\psi_{i}\right)_{i\in I}$
be a H\"older family associated with $f$ and let $t:\R\rightarrow\R$
denote the free energy function for  $\left(f,\psi\right)$. Suppose
that there exists $\gamma\in\R$ such that $\mathcal{P}\bigl(\gamma\tilde{\psi},\tilde{f}\bigr)=0$
and suppose that $\alpha_{-}\left(\psi\right)<\alpha_{+}\left(\psi\right)$.
Then (1) the Hausdorff dimension spectrum \emph{$l$ of $\left(f,\psi\right)$}
is real analytic and strictly concave on $\left(\alpha_{-}\left(\psi\right),\alpha_{+}\left(\psi\right)\right)$
with maximal value $\delta$, (2) $l''<0$ on $(\alpha _{-}(\psi ),\alpha _{+}(\psi ))$, and (3) for $\alpha\in\left(\alpha_{-}\left(\psi\right),\alpha_{+}\left(\psi\right)\right)$
we have that 
\[
l\left(\alpha\right)=-t^{*}\left(-\alpha\right)>0.
\]
\end{thm}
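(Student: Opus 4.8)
The plan is to reduce Theorem~\ref{thm:-mf-intro} to the thermodynamic formalism of the skew product $\tilde f$ restricted to its fibred Julia set $J(\tilde f)$: since $f$ is expanding, $\tilde f|_{J(\tilde f)}$ is expanding along fibres, hence codable by a topologically mixing subshift, so every H\"older potential $\phi$ has a unique equilibrium state $\mu_\phi$, which is a Gibbs state (fully supported, hence of positive entropy), and $\phi\mapsto\mathcal{P}(\phi,\tilde f)$ is real-analytic along finite-dimensional families of H\"older potentials. The central object is $\mathcal{T}(\beta,s):=\mathcal{P}(\beta\tilde\psi+s\tilde\zeta,\tilde f)$. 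Because $\tilde\zeta=-\log\Vert f'_{\omega_1}\Vert$ is bounded and uniformly negative, $\partial_s\mathcal{T}(\beta,s)=\int\tilde\zeta\,d\mu_{\beta\tilde\psi+s\tilde\zeta}<0$, so the implicit function theorem makes $s=t(\beta)$ well-defined and real-analytic; differentiating $\mathcal{T}(\beta,t(\beta))=0$ twice, with the asymptotic-variance identity $t''(\beta)=\sigma^{2}_{\mu_\beta}(\tilde\psi+t'(\beta)\tilde\zeta)/(\int\tilde\zeta\,d\mu_\beta)^{2}$, shows $t''>0$ everywhere once $\alpha_-(\psi)<\alpha_+(\psi)$, since the latter is equivalent to $\tilde\psi$ not being cohomologous to a scalar multiple of $\tilde\zeta$, which in turn forces the relevant asymptotic variance to be strictly positive. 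Consequently $\beta\mapsto\alpha(\beta):=-t'(\beta)$ is a real-analytic decreasing bijection of $\R$ onto $(\alpha_-(\psi),\alpha_+(\psi))$.

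For the lower bound, fix $\alpha\in(\alpha_-(\psi),\alpha_+(\psi))$, take the unique $\beta$ with $\alpha(\beta)=\alpha$, and let $\mu_\beta$ be the ergodic Gibbs equilibrium state of $\beta\tilde\psi+t(\beta)\tilde\zeta$. Differentiating $\mathcal{P}(\beta\tilde\psi+t(\beta)\tilde\zeta,\tilde f)=0$ in $\beta$ yields $\int\tilde\psi\,d\mu_\beta+t'(\beta)\int\tilde\zeta\,d\mu_\beta=0$, so that $\int\tilde\psi\,d\mu_\beta/\int\tilde\zeta\,d\mu_\beta=\alpha$, whence Birkhoff's ergodic theorem gives $\mu_\beta(\tilde{\mathcal{F}}(\alpha,\psi))=1$. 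The variational principle together with the Gibbs property gives $h_{\mu_\beta}(\tilde f)=-\beta\int\tilde\psi\,d\mu_\beta-t(\beta)\int\tilde\zeta\,d\mu_\beta$, while the fibre Lyapunov exponent is $\lambda_{\mu_\beta}=-\int\tilde\zeta\,d\mu_\beta>0$; hence $h_{\mu_\beta}(\tilde f)/\lambda_{\mu_\beta}=\beta\alpha+t(\beta)=\inf_{\beta'\in\R}(\beta'\alpha+t(\beta'))=-t^{*}(-\alpha)$, the infimum being attained at the critical point $\beta'=\beta$ by convexity of $t$. A volume lemma for the projected measure $\nu_\beta:=(\pi_{\Chat})_{*}\mu_\beta$ — valid because the open set condition forces $\mu_\beta$-almost every point of $J(\tilde f)$ to carry a unique symbolic code, so $\pi_{\Chat}$ preserves Hausdorff dimension — gives $\dim_H(\nu_\beta)=h_{\mu_\beta}(\tilde f)/\lambda_{\mu_\beta}$; since $\nu_\beta(\mathcal{F}(\alpha,\psi))=1$ we obtain $l(\alpha)\ge\dim_H(\nu_\beta)=-t^{*}(-\alpha)$.

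For the matching upper bound, keep $\alpha,\beta$ and put $s:=\beta\alpha+t(\beta)=-t^{*}(-\alpha)$; the potential $\phi:=\beta\tilde\psi+t(\beta)\tilde\zeta$ has $\mathcal{P}(\phi,\tilde f)=0$. By bounded distortion the length-$n$ symbolic cylinder of $x$ projects to a subset of $J(G)$ of diameter $\asymp\exp(S_n\tilde\zeta(x))$ (indeed $S_n\tilde\zeta(x)=-\log\Vert(f_{\omega_n}\circ\cdots\circ f_{\omega_1})'(\pi_{\Chat}(x))\Vert$), while for $x\in\tilde{\mathcal{F}}(\alpha,\psi)$, using $S_n\tilde\psi(x)/S_n\tilde\zeta(x)\to\alpha$ and $S_n\tilde\zeta(x)\asymp-n$, one has $\exp(S_n\phi(x))=\exp((\beta\alpha+t(\beta))S_n\tilde\zeta(x)+o(n))$, essentially the $s$-th power of that diameter. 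Splitting $\mathcal{F}(\alpha,\psi)$ into the subsets $\mathcal{F}_{\alpha,N,\epsilon}$ on which $|S_n\tilde\psi/S_n\tilde\zeta-\alpha|<\epsilon$ holds for all $n\ge N$, covering each by the $\pi_{\Chat}$-images of the length-$n$ cylinders meeting it, and using the pressure estimate $\sum_{|w|=n}\exp(\sup_{[w]}S_n\phi)\le Ce^{n\mathcal{P}(\phi,\tilde f)}=C$, one gets $\mathcal{H}^{s+C'\epsilon}(\mathcal{F}_{\alpha,N,\epsilon})<\infty$ for all $N$; taking unions over $N$ and then $\epsilon\downarrow0$ gives $\dim_H\mathcal{F}(\alpha,\psi)\le-t^{*}(-\alpha)$, so (3) holds. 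Positivity $-t^{*}(-\alpha)>0$ follows from $l(\alpha)=h_{\mu_\beta}(\tilde f)/\lambda_{\mu_\beta}$ and $h_{\mu_\beta}(\tilde f)>0$ (a Gibbs state on an infinite mixing system has positive entropy); the hypothesis $\mathcal{P}(\gamma\tilde\psi,\tilde f)=0$ serves to pin down $(\alpha_-(\psi),\alpha_+(\psi))$ as precisely the interior of the set on which $\alpha\mapsto-t^{*}(-\alpha)$ is finite and positive.

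Finally, as $t$ is real-analytic and strictly convex on $\R$ with $t'$ a real-analytic diffeomorphism of $\R$ onto $(-\alpha_+(\psi),-\alpha_-(\psi))$, convex duality makes $t^{*}$ real-analytic and strictly convex on $(-\alpha_+(\psi),-\alpha_-(\psi))$ with $(t^{*})''(c)=1/t''((t')^{-1}(c))>0$; hence $l(\alpha)=-t^{*}(-\alpha)$ is real-analytic and strictly concave on $(\alpha_-(\psi),\alpha_+(\psi))$, with $l''(\alpha)=-(t^{*})''(-\alpha)<0$ and $\sup_\alpha l(\alpha)=-\inf_c t^{*}(c)=t(0)=\delta$, attained at $\alpha=-t'(0)$; this gives (1) and (2). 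I expect the main obstacle to be the geometric step underlying the volume lemma: with only the open set condition in force (rather than the separation condition of Theorem~\ref{thm:-regularity-intro}), one must show the symbolic coding still faithfully records the metric geometry of $J(G)$ — in particular that the projected Gibbs measures lose no Hausdorff dimension despite possible overlaps — while the second ingredient to be imported is the real-analyticity of $\beta\mapsto\mathcal{P}(\beta\tilde\psi+t(\beta)\tilde\zeta,\tilde f)$, via Ruelle's analytic perturbation theory for transfer operators adapted to the skew product.
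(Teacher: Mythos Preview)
Your overall strategy coincides with the paper's: both proofs pass through the thermodynamic formalism of the expanding skew product $\tilde f$, establish real-analyticity and strict convexity of $t$ via Ruelle-type perturbation theory plus the cohomology criterion, identify $-t'(\R)=(\alpha_-(\psi),\alpha_+(\psi))$, and then match upper and lower bounds for $\dim_H\mathcal{F}(\alpha,\psi)$ against $-t^{*}(-\alpha)$ using the equilibrium/conformal measures $\tilde\mu_\beta\sim\tilde\nu_\beta$. The concluding duality argument for (1) and (2) is essentially identical to the paper's Lemma~\ref{lem:legendre-transform-convex-conjugate}.

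The implementations diverge in two places. For the \emph{upper bound}, the paper does not cover by projected cylinders; instead (Lemma~\ref{lem:upper-bound}) it bounds the lower local dimension of the subconformal pushforward $\nu_\beta=(\pi_{\Chat})_*\tilde\nu_\beta$ at every $z\in\mathcal{F}^{\sharp}(\alpha,\psi)$ by $t(\beta)+\beta\alpha$, via Koebe distortion and subconformality. This has the advantage of working \emph{without} the open set condition, and of bounding the larger set $\mathcal{F}^{\sharp}(\alpha,\psi)$. Your cylinder-covering argument is also valid, but note that $\mathcal{F}(\alpha,\psi)\subset\Chat$ while $S_n\tilde\psi/S_n\tilde\zeta$ lives on $J(\tilde f)$; you must choose, for each $z$, \emph{one} $\omega$ with $(\omega,z)\in\tilde{\mathcal{F}}(\alpha,\psi)$ and cover by the projected cylinders along that $\omega$.

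For the \emph{lower bound}, your justification of the volume lemma is where the gap sits. The phrase ``$\mu_\beta$-almost every point of $J(\tilde f)$ carries a unique symbolic code'' misidentifies the difficulty: points of $J(\tilde f)$ already carry the code $\omega$; what must be controlled is the \emph{multiplicity of the projection} $\pi_{\Chat}:J(\tilde f)\to J(G)$. Under the open set condition the fibres $\pi_{\Chat}^{-1}(z)$ need not be singletons (the paper's Remark after Theorem~\ref{thm:-mf-intro} stresses that since the $f_i$ are not injective, this step ``needs much more efforts\dots even if the open set condition is assumed''). The paper does \emph{not} prove a.e.\ injectivity; instead (Proposition~\ref{prop:local-dimension-lowerbound}) it imports from \cite[Section~5]{MR2153926} a covering lemma: for every small ball $B(z,r)$ one has $\pi_{\Chat}^{-1}(B(z,r))\cap J(\tilde f)$ covered by at most $C_4$ inverse-branch images $\eta^i(\pi_{\Chat}^{-1}(B(K_{v_i},\epsilon/5)))$ with derivative bound $\|(\gamma_1^i\circ\cdots\circ\gamma_{l_i}^i)'\|\le C_5 r$, with $C_4,C_5$ independent of $z,r$. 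Combining this bounded-multiplicity covering with conformality of $\tilde\nu_\beta$ and an Egoroff restriction to a set where $S_n\tilde\psi/S_n\tilde\zeta\to\alpha$ uniformly yields $\tilde\nu_\beta(\pi_{\Chat}^{-1}(B(z,r))\cap\tilde E)\le C r^{t(\beta)+\beta\alpha-\Delta}$, hence $\dim_H\nu_\beta\ge -t^{*}(-\alpha)$. So the correct replacement for your ``unique code'' step is a \emph{bounded-overlap} statement for inverse branches under OSC, and that is precisely what \cite{MR2153926} supplies.
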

\begin{rem*}
Note that in Theorem \ref{thm:-mf-intro} we only assume that the multi-map $f$ satisfies the open set condition, which does not imply that $f$ satisfies the  separation condition. In fact, there are many $2$-generator expanding polynomial semigroups satisfying the open set condition for which the Julia set is connected (see \cite{twogenerator}).  
Theorem \ref{thm:-mf-intro} can be applied even to such semigroups. 
Moreover, let us remark that, since each map of the generator system $\left(f_{i}\right)_{i\in I}$
is not injective in general, we need much more efforts in the proof
of Theorem \ref{thm:-mf-intro} than in the case of contracting iterated
function systems, even if the open set condition is assumed.
\end{rem*}

\begin{rem*}
Note that, in general, we cannot replace the Hausdorff dimension by the box-counting dimension in Theorem \ref{thm:-mf-intro}. In fact,
if $\alpha=-t'\left(\beta\right)$ for some $\beta\in\R$, then we
have that $\nu_{\beta}\left(\mathcal{F}\left(\alpha,\psi\right)\right)=1$
and $\supp\left(\nu_{\beta}\right)=J\left(G\right)$ by Lemmas \ref{lem:support-of-nu-beta}
and \ref{lem:existence-sub-conformalmeasure-on-juliaset}, where $\nu_{\beta}$
is given in Definition \ref{def-betaconformal-eigenmeasures}. Hence,
$\mathcal{F}\left(\alpha,\psi\right)$ is dense in $J\left(G\right)$,
which implies that $\dim_{B}\left(\mathcal{F}\left(\alpha,\psi\right)\right)=\dim_{B}\left(J\left(G\right)\right)$,
where $\dim_{B}$ refers to the box-counting dimension. 
\end{rem*}
The results stated in Proposition \ref{prop:intro-degenerate-criterion}
and Corollary \ref{cor:intro-lyapunovdegeneracy} below follow from
the general theory without assuming the open set condition (see Theorem
\ref{thm:multifractalformalism} (\ref{enu:range-of-spectrum}) and
(\ref{enu:upperbound-of-spectrum})). If each potential $\psi_{i}$
is constant, then we have the following criterion for a non-trivial
multifractal spectrum. 
\begin{prop}
[Proposition  \ref{lem:degenerate-via-zdunik}]  \label{prop:intro-degenerate-criterion}Let
$f=\left(f_{i}\right)_{i\in I}$ be an expanding multi-map and let
\foreignlanguage{english}{\textup{$G=\left\langle f_{i}:i\in I\right\rangle $.}}
(We do not assume the open set condition.) Suppose that $\deg\left(f_{i_{0}}\right)\ge2$
for some $i_{0}\in I$. Let $\left(c_{i}\right)_{i\in I}$ be a sequence
of negative numbers and let the H\"older family $\psi=\left(\psi_{i}:f_{i}^{-1}\left(J\left(G\right)\right)\rightarrow\R\right)_{i\in I}$
be given by $\psi_{i}\left(z\right)=c_{i}$ for each $z\in f_{i}^{-1}\left(J\left(G\right)\right)$.
 Then we have $\alpha_{-}\left(\psi\right)=\alpha_{+}\left(\psi\right)$
if and only if there exist an automorphism $\varphi\in\Aut\bigl(\Chat\bigr)$,
complex numbers $\left(a_{i}\right)_{i\in I}$ and $\lambda\in\R$
such that for all $i\in I$, 
\[
\varphi\circ f_{i}\circ\varphi^{-1}\left(z\right)=a_{i}z^{\pm\deg\left(f_{i}\right)}\quad\mbox{and}\quad\log\deg\left(f_{i}\right)=\lambda c_{i}.
\]

\end{prop}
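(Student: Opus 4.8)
The plan is to route both implications through a single statement on the skew product: $\alpha_-(\psi)=\alpha_+(\psi)$ holds if and only if $\tilde\psi-\alpha\tilde\zeta$ is a H\"older coboundary for $\tilde f$ on $J(\tilde f)$ for some $\alpha\in(0,\infty)$, i.e. $\tilde\psi-\alpha\tilde\zeta=u\circ\tilde f-u$ for a H\"older $u\colon J(\tilde f)\to\R$. This reduction I would take essentially for free from Theorem \ref{thm:multifractalformalism}(\ref{enu:range-of-spectrum}) and (\ref{enu:upperbound-of-spectrum}): those parts identify $\alpha_\pm(\psi)$ with the extreme values, over $\tilde f$-invariant measures $\mu$ on $J(\tilde f)$, of the ratio $\int\tilde\psi\,d\mu/\int\tilde\zeta\,d\mu$ (equivalently, with the endpoints of $\{-t'(\beta):\beta\in\R\}$); since $\int\tilde\psi\,d\mu<0$ (each $c_i<0$) and $\int\tilde\zeta\,d\mu<0$ (by expansion), this ratio ranges over a compact subinterval of $(0,\infty)$, and its endpoints coincide precisely when $\int(\tilde\psi-\alpha\tilde\zeta)\,d\mu=0$ for all such $\mu$, which by the Livšic/Bowen characterisation of coboundaries for the expanding system $\tilde f$ means exactly that $\tilde\psi-\alpha\tilde\zeta$ is a H\"older coboundary (equivalently, $t$ is affine). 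It then remains to characterise when such a coboundary exists.

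For the ``if'' direction I would argue as follows. Assume $\varphi\circ f_i\circ\varphi^{-1}(z)=a_iz^{\pm\deg f_i}$ and $\log\deg f_i=\lambda c_i$ for all $i$. Since $c_{i_0}<0$ and $\deg f_{i_0}\geq2$ we get $\lambda=\log\deg f_{i_0}/c_{i_0}<0$, hence $\deg f_i\geq2$ for every $i$; thus $0$ and $\infty$ are critical points of every $f_i$, and since $\pi_{\Chat}(J(\tilde f))=J(G)$ and $\tilde f$ has no fibrewise critical point on $J(\tilde f)$, the set $J(G)$ is a compact subset of $\C\setminus\{0\}$. Conjugating the whole system by $\varphi$ (which changes $\tilde\zeta$ only by a coboundary and leaves $\tilde\psi$, the pressure, and the level sets unchanged up to a homeomorphism) I may assume $f_i(z)=a_iz^{\pm\deg f_i}$. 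Then $v(\omega,z):=\log|z|$ is a bounded Lipschitz, hence H\"older, function on $J(\tilde f)$, and using $\|g'(z)\|=|g'(z)|(1+|z|^2)/(1+|g(z)|^2)$ to pass from the spherical to the Euclidean derivative modulo a coboundary, a short computation gives $\tilde\zeta=-\log\deg f_{\omega_1}-(v\circ\tilde f-v)+(\text{coboundary})$. Hence $\tilde\zeta$ is cohomologous to $-\log\deg f_{\omega_1}$, and since $\tilde\psi=c_{\omega_1}=\lambda^{-1}\log\deg f_{\omega_1}$, the function $\tilde\psi-\alpha\tilde\zeta$ with $\alpha:=-1/\lambda\in(0,\infty)$ is a coboundary. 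The reduction then gives $\alpha_-(\psi)=\alpha_+(\psi)$.

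For the ``only if'' direction, from $\alpha_-(\psi)=\alpha_+(\psi)$ the reduction yields $\tilde\psi-\alpha\tilde\zeta=u\circ\tilde f-u$ with $u$ H\"older and $\alpha\in(0,\infty)$. Evaluating this along a $\sigma$-periodic sequence $\omega=\overline{(w_1,\dots,w_n)}$ at a point $z$ fixed by the composition $h_w:=f_{w_n}\circ\cdots\circ f_{w_1}$, the telescoping sum together with the chain rule gives $\alpha\log\|h_w'(z)\|=-\sum_{j=1}^n c_{w_j}$ for every periodic point of every such composition. Choosing $w=(i_0,\dots,i_0)$ shows that every period-$n$ point of $f_{i_0}^n$ has multiplier of spherical norm $\exp(-nc_{i_0}/\alpha)$; since $\deg f_{i_0}\geq2$ and (by expansion) $J(f_{i_0})$ contains no critical point of $f_{i_0}$, the map $f_{i_0}$ is hyperbolic and this rigidity of periodic multipliers is equivalent to $\log\|f_{i_0}'\|$ being cohomologous to a constant on $J(f_{i_0})$. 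I would now invoke Zdunik's rigidity theorem --- the Chebyshev and Latt\`es alternatives being excluded by expansion --- to obtain $\varphi\in\Aut(\Chat)$ with $\varphi\circ f_{i_0}\circ\varphi^{-1}(z)=a_{i_0}z^{\pm\deg f_{i_0}}$.

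The main obstacle is the last step: promoting this to a \emph{simultaneous} conjugacy for all $i\in I$ and deducing the degree relations. After conjugating by $\varphi$, the circle $J(f_{i_0})=\{|z|=r\}$ lies inside the compact set $J(G)\subseteq\C\setminus\{0\}$, and I would need to show that every generator $f_j$ is again of the form $a_jz^{\pm\deg f_j}$ in this coordinate and that $\deg f_j\geq2$. The mechanism is to apply the periodic-multiplier identity $\alpha\log\|h_w'(z)\|=-\sum_j c_{w_j}$ to words $w$ mixing the letters $i_0$ and $j$: the rigidity of $f_{i_0}$ fixes the conformal structure near $J(f_{i_0})$, and the constancy of the multipliers over all such compositions forces the same ``$z\mapsto az^{\pm d}$'' normal form onto $f_j$ --- in particular a generator of degree one cannot occur, as it would violate these rigid identities. (As the Remark after Theorem \ref{thm:-mf-intro} points out, this is precisely where the semigroup case needs considerably more work than the single-map case; it is carried out inside the proof of Theorem \ref{thm:multifractalformalism} by combining Zdunik's method with the cohomological relation above.) Once every $f_i$ has the form $a_iz^{\pm\deg f_i}$, a period-$n$ point of $f_i^n$ on the invariant circle has multiplier of spherical norm $(\deg f_i)^n$, so $-nc_i/\alpha=n\log\deg f_i$, i.e. $\log\deg f_i=\lambda c_i$ with $\lambda:=-1/\alpha$, completing the proof.
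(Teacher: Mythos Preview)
Your overall strategy matches the paper's: both routes reduce $\alpha_-(\psi)=\alpha_+(\psi)$ to a cohomological equation (the paper phrases it as $\delta\tilde\zeta=\gamma\tilde\psi+v-v\circ\tilde f$, equivalent to your $\tilde\psi-\alpha\tilde\zeta$ being a coboundary with $\alpha=\delta/\gamma$), and both invoke Zdunik's rigidity for the ``only if'' direction and a Livšic argument for ``if''. Your ``if'' direction is essentially the paper's.

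The gap is in the ``only if'' direction, at exactly the place you flag as the main obstacle: obtaining a \emph{single} $\varphi$ that simultaneously conjugates every $f_i$ to $a_iz^{\pm\deg f_i}$. Your plan---conjugate $f_{i_0}$ first via Zdunik, then use mixed-word multiplier identities to force the same normal form on the other generators---is only a sketch, and the sentence ``it is carried out inside the proof of Theorem~\ref{thm:multifractalformalism}'' is incorrect: that theorem contains no Zdunik-type argument. The paper proceeds differently and more directly. From the cohomology it shows that for \emph{every} finite word $\xi\in I^n$ the pressure function $u\mapsto p(u,\xi):=\mathcal P\bigl(u\log\|f_\xi'\|\bigr)$ on $J(f_\xi)$ has constant derivative $-(\gamma/\delta)\sum_{j=1}^n c_{\xi_j}$, hence is affine; this means every composition $f_\xi$ individually satisfies Zdunik's rigidity hypothesis. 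The simultaneous conjugacy (same $\varphi$ for all $i$) is then obtained by the argument of \cite[Proof of Theorem~3.1]{su09}, which the paper cites rather than reproduces. So the missing piece in your write-up is not in Theorem~\ref{thm:multifractalformalism} but in \cite{su09}; and the cleaner route is to apply the affine-pressure criterion to all words $\xi$ at once rather than bootstrapping from $i_0$.

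One further point: your claim that ``a generator of degree one cannot occur, as it would violate these rigid identities'' needs an actual argument. In the paper this is deduced after the normal form is established, from $\deg f_{i_0}\ge 2$ together with expandingness; it is not obtained from the multiplier identities alone.
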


\begin{rem*}
Let us point out the relation between Proposition \ref{prop:intro-degenerate-criterion} and rigidity results in thermodynamic formalism. There are several equivalent formulations to characterize when the multifractal spectrum degenerates. Namely,  the equality  
$\alpha_{-}(\psi )=\alpha_{+}(\psi )$ is equivalent to each of the following statements: (1) The equilibrium states $\tilde{\mu }_{0}$ and $\tilde{\mu }_{\gamma }$  coincide. (2) The graph of the free energy function $t$ is a straight line.  
(3) There exists a 
continuous function $u:J(\tilde{f})\rightarrow \Bbb{R}
 $ such that 
$\delta \tilde{\zeta }-\gamma \tilde{\psi }=u-u\circ \tilde{f}.$  
For precise statement and the proof, we refer to 
Proposition \ref{prop:characterisation-strict-convexity} and the proofs of 
Theorem~\ref{thm:multifractalformalism} and Proposition~\ref{lem:degenerate-via-zdunik}.  
\end{rem*}

\begin{rem*} 
The proof of Proposition \ref{prop:intro-degenerate-criterion} makes use of a rigidity result of Zdunik (\cite{MR1032883}) for the classical iteration of  a single rational map. We give an extension of this result to rational semigroups. We emphasize that the map $\varphi$ in Proposition \ref{prop:intro-degenerate-criterion}  is independent of $i\in I$.
\end{rem*}

An interesting special case is given by the Lyapunov spectrum of an
expanding multi-map $f=\left(f_{i}\right)_{i\in I}$. The Lyapunov
spectrum is given by the level sets $\mathcal{L}\left(\alpha\right)$,
$\alpha\in\R$, where we define 
\[
\mathcal{L}\left(\alpha\right):=\mathcal{\mathcal{F}}\left(\alpha,-\1\right)=\left\{ z\in\Chat:\exists\omega\in I^{\N}\mbox{ such that }(\omega, z)\in J(\tilde{f}) \mbox{ and }\lim_{n\rightarrow\infty}\frac{n}{\log\Vert\left(f_{\omega_{n}}\circ\dots\circ f_{\omega_{1}}\right)'\left(z\right)\Vert}=\alpha\right\} .
\]
We say that $f=\left(f_{i}\right)_{i\in I}$ has trivial Lyapunov
spectrum if there exists $\alpha_{0}\in\R$ such that $\mathcal{L}\left(\alpha\right)=\emptyset$
if $\alpha\neq\alpha_{0}$. 
\begin{cor}
[Lyapunov spectrum] \label{cor:intro-lyapunovdegeneracy}Let $f=\left(f_{i}\right)_{i\in I}$
be an expanding multi-map. Suppose that $\deg\left(f_{i_{0}}\right)\ge2$
for some $i_{0}\in I$. Then $f$ has trivial Lyapunov spectrum if
and only if there exist an automorphism $\varphi\in\Aut\bigl(\Chat\bigr)$
and complex numbers $\left(a_{i}\right)_{i\in I}$ such that $\varphi\circ f_{i}\circ\varphi^{-1}\left(z\right)=a_{i}z^{\pm\deg\left(f_{i_{0}}\right)}$. 
\end{cor}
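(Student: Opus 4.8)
The plan is to derive Corollary~\ref{cor:intro-lyapunovdegeneracy} from Proposition~\ref{prop:intro-degenerate-criterion} by choosing the constant H\"older family appropriately. Concretely, take $c_i = -1$ for every $i\in I$, so that $\psi = -\1$ and $\mathcal{F}(\alpha,\psi) = \mathcal{L}(\alpha)$ by the very definition of the Lyapunov spectrum. Since the $c_i$ are negative constants, Proposition~\ref{prop:intro-degenerate-criterion} applies directly (we already assumed $\deg(f_{i_0})\ge 2$ for some $i_0\in I$), and it tells us that $\alpha_-(\psi)=\alpha_+(\psi)$ — equivalently, that $f$ has trivial Lyapunov spectrum — if and only if there exist $\varphi\in\Aut(\Chat)$, complex numbers $(a_i)_{i\in I}$ and $\lambda\in\R$ with
\[
\varphi\circ f_i\circ\varphi^{-1}(z) = a_i z^{\pm\deg(f_i)}\quad\text{and}\quad \log\deg(f_i) = \lambda c_i = -\lambda
\]
for all $i\in I$.

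The point then is to simplify the second condition. Because the right-hand side $-\lambda$ is independent of $i$, the equation $\log\deg(f_i) = -\lambda$ forces $\deg(f_i)$ to be the same integer for all $i\in I$; in particular, since $\deg(f_{i_0})\ge 2$, we get $\deg(f_i) = \deg(f_{i_0})\ge 2$ for every $i$. Conversely, if all $f_i$ have the common degree $\deg(f_{i_0})$ and are simultaneously conjugated by $\varphi$ to the normal forms $a_i z^{\pm\deg(f_{i_0})}$, then setting $\lambda := -\log\deg(f_{i_0})$ makes the relation $\log\deg(f_i)=\lambda c_i$ hold automatically. Thus the ``$\lambda$'' in Proposition~\ref{prop:intro-degenerate-criterion} can be eliminated from the statement, and the condition collapses to exactly: there exist $\varphi\in\Aut(\Chat)$ and $(a_i)_{i\in I}$ with $\varphi\circ f_i\circ\varphi^{-1}(z) = a_i z^{\pm\deg(f_{i_0})}$. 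One should also remark that the exponents $\pm\deg(f_i)$ appearing in Proposition~\ref{prop:intro-degenerate-criterion} are the actual degrees, so writing $\deg(f_{i_0})$ in place of $\deg(f_i)$ is legitimate precisely because we have just shown all degrees coincide.

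I expect this to be essentially a bookkeeping argument once Proposition~\ref{prop:intro-degenerate-criterion} is in hand, so there is no serious obstacle; the only thing requiring a line of care is checking that the specialization $c_i=-1$ genuinely yields $\mathcal{F}(\alpha,-\1)=\mathcal{L}(\alpha)$, i.e.\ that the normalization $\tilde\zeta$ in the denominator of the defining quotient $S_n\tilde\psi/S_n\tilde\zeta$ matches $-\log\Vert(f_{\omega_n}\circ\cdots\circ f_{\omega_1})'(z)\Vert$ via the chain rule along fibres of the skew product, which is immediate from $S_n\tilde\zeta(\omega,z) = -\sum_{k=0}^{n-1}\log\Vert f_{\omega_{k+1}}'(f_{\omega_k}\circ\cdots\circ f_{\omega_1}(z))\Vert$ and the fact that $S_n(-\1) = -n$. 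After that, the forward and backward implications are just the two halves of Proposition~\ref{prop:intro-degenerate-criterion} with the harmless substitution $\lambda = -\log\deg(f_{i_0})$.
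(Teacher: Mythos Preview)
Your proof is correct and follows exactly the approach the paper intends: the corollary is stated immediately after Proposition~\ref{prop:intro-degenerate-criterion}, and the paper itself records the identity $\mathcal{L}(\alpha)=\mathcal{F}(\alpha,-\1)$, so specializing $c_i=-1$ and reading off that $\log\deg(f_i)=-\lambda$ forces a common degree is precisely the intended derivation. Your verification that $\alpha_-(\psi)=\alpha_+(\psi)$ is equivalent to triviality of the Lyapunov spectrum, and the chain-rule check that $S_n\tilde\zeta$ matches $-\log\Vert(f_{\omega|_n})'(z)\Vert$, are the only bookkeeping points, and you handle both.
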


The paper is organised as follows. In Section \ref{sec:Preliminaries}
we collect the necessary preliminaries on the dynamics of expanding
rational semigroups. In Section \ref{sec:Thermodynamic-formalism-for}
we recall basic facts from thermodynamic formalism for expanding dynamical
systems in the framework of the skew product associated with an expanding
rational semigroup. In Section \ref{sec:Multifractal-Formalism} we
investigate the local dimension of conformal measures supported on
subsets of the Julia set of rational semigroups satisfying the open
set condition and we prove a multifractal formalism for H\"older
continuous potentials in Theorem \ref{thm:multifractalformalism}.
In Section \ref{sec:Application-to-Random}  we apply the multifractal
formalism to investigate the H\"older regularity of linear combinations
of unitary eigenfunctions of transition operators in random complex
dynamics. Finally, examples of our results are given in Section \ref{sec:Examples}. 
\begin{acknowledgement*}
The authors would like to thank Rich Stankewitz for valuable comments. The authors would like to thank the referee for a careful reading
of the manuscript and for the helpful comments. 
The research of the first author was supported by the research fellowship
JA 2145/1-1 of the German Research Foundation (DFG) and the JSPS Postdoctoral Fellowship (ID No. P14321). The research
of the second author was partially supported by JSPS KAKENHI 24540211. 
\end{acknowledgement*}

\section{Preliminaries \label{sec:Preliminaries}}

Let $f=\left(f_{i}\right)_{i\in I}\in\left(\Rat\right)^{I}$ be a 
multi-map and let \foreignlanguage{english}{$G=\left\langle f_{i}:i\in I\right\rangle $.
For $n\in\N$ and $\left(\omega_{1},\omega_{2},\dots,\omega_{n}\right)\in I^{n}$,
we set $f_{\left(\omega_{1},\omega_{2},\dots,\omega_{n}\right)}:=f_{\omega_{n}}\circ f_{\omega_{n-1}}\circ\dots\circ f_{\omega_{1}}$.
For $\omega\in I^{\N}$ we set $\omega_{|n}:=\left(\omega_{1},\omega_{2},\dots,\omega_{n}\right)$
and we define} 
\[
F_{\omega}:=\left\{ z\in\Chat:\Bigl(f_{\omega_{|n}}\Bigr)_{n\in\N}\mbox{ is normal in a neighbourhood of }z\right\} \mbox{ and }J_{\omega}:=\Chat\setminus F_{\omega}.
\]
The \emph{skew product }associated with\emph{ $f=\left(f_{i}\right)_{i\in I}$}
is given by 
\[
\tilde{f}:I^{\N}\times\hat{\C}\rightarrow I^{\N}\times\hat{\C},\quad\tilde{f}\left(\omega,z\right):=\left(\sigma\left(\omega\right),f_{\omega_{1}}\left(z\right)\right),
\]
where $\sigma:I^{\N}\rightarrow I^{\N}$ denotes the left-shift map
given by $\sigma\left(\omega_{1},\omega_{2},\dots\right):=\left(\omega_{2},\omega_{3},\dots\right)$,
for $\omega=\left(\omega_{1},\omega_{2},\dots\right)\in I^{\N}$. 

For each $\omega\in I^{\N}$, we set $J^{\omega}:=\left\{ \omega\right\} \times J_{\omega}$
and we set 
\[
J\left(\tilde{f}\right):=\overline{\bigcup_{\omega\in I^{\N}}J^{\omega}},\quad F\left(\tilde{f}\right):=\left(I^{\N}\times\hat{\C}\right)\setminus J\left(\tilde{f}\right),
\]
where the closure is taken with respect to the product topology on
$I^{\N}\times\Chat$. Let $\pi_{1}:I^{\N}\times\Chat\rightarrow I^{\N}$
and $\pi_{\Chat}:I^{\N}\times\Chat\rightarrow\Chat$ denote the canonical
projections. We refer to \cite[Proposition 3.2]{MR1767945} for the
proof of the following proposition. 
\begin{prop}
\label{prop:basic-facts}Let $f=\left(f_{i}\right)_{i\in I}\in\left(\Rat\right)^{I}$
and let $G=\left\langle f_{i}:i\in I\right\rangle $.  Let $\tilde{f}:I^{\N}\times\hat{\C}\rightarrow I^{\N}\times\hat{\C}$ be the skew product associated with $f$.  Then we have the following. \end{prop}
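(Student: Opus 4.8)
\emph{Proof plan.} The proposition collects the standard structural facts about the skew product $\tilde f$, and all of them are obtained by transporting the classical Fatou--Julia theory to each fibre $\{\omega\}\times\Chat$ and then controlling the passage to the closure in $J(\tilde f)=\overline{\bigcup_{\omega}\{\omega\}\times J_{\omega}}$ by means of the compactness of $I^{\N}$; the standing hypothesis that $I$ is finite is used here in an essential way. The facts to be established are: (i) $\tilde f$ is a continuous, open, finite-to-one self-map of $I^{\N}\times\Chat$; (ii) $J(\tilde f)$ is compact and completely invariant, $\tilde f(J(\tilde f))=J(\tilde f)=\tilde f^{-1}(J(\tilde f))$; (iii) $\pi_{\Chat}(J(\tilde f))=J(G)$; and (iv) if $J(G)$ has at least three points, then $J(\tilde f)$ is perfect and equals the closure of the set of repelling periodic points of $\tilde f$.

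For (i): $\sigma$ is continuous on $I^{\N}$, and $(\omega,z)\mapsto f_{\omega_{1}}(z)$ is continuous since $\omega\mapsto\omega_{1}$ is locally constant (because $I$ is finite and $I^{\N}$ carries the product topology) and each $f_{i}\in\Rat$ is continuous on $\Chat$. The same local constancy of $\omega_{1}$ reduces openness and finite-to-one-ness of $\tilde f$ to the corresponding assertions for $\sigma\times f_{i}$ on each clopen cylinder $\{\omega_{1}=i\}\times\Chat$: a non-constant rational map is open and finite-to-one on $\Chat$, with fibres of cardinality at most $\deg f_{i}$, and $\sigma$ is open on $I^{\N}$.

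For (ii)--(iv) the engine is the fibrewise, non-autonomous analogue of the complete invariance of the Julia set, namely $f_{\omega_{1}}(J_{\omega})=J_{\sigma\omega}$ and $f_{\omega_{1}}^{-1}(J_{\sigma\omega})=J_{\omega}$; this follows from $f_{\omega_{|n}}=f_{(\sigma\omega)_{|n-1}}\circ f_{\omega_{1}}$ together with the openness of $f_{\omega_{1}}$, which make normality of $(f_{\omega_{|n}})_{n}$ near $z$ equivalent to normality of $(f_{(\sigma\omega)_{|n}})_{n}$ near $f_{\omega_{1}}(z)$. Hence $\tilde f\bigl(\bigcup_{\omega}\{\omega\}\times J_{\omega}\bigr)=\bigcup_{\omega}\{\omega\}\times J_{\omega}=\tilde f^{-1}\bigl(\bigcup_{\omega}\{\omega\}\times J_{\omega}\bigr)$. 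Passing to the closure, $\tilde f(J(\tilde f))=\overline{\tilde f(\bigcup_{\omega}\{\omega\}\times J_{\omega})}=J(\tilde f)$ follows from continuity and compactness, while the remaining inclusion $\tilde f^{-1}(J(\tilde f))\subseteq J(\tilde f)$ is exactly where openness of $\tilde f$ enters: if $\tilde f(x)\in\overline{\bigcup_{\omega}\{\omega\}\times J_{\omega}}$, then by openness $\tilde f$ maps every neighbourhood $U$ of $x$ onto a set containing a neighbourhood of $\tilde f(x)$, which therefore meets $\bigcup_{\omega}\{\omega\}\times J_{\omega}$; pulling a point of this intersection back into $U$ and using $\tilde f^{-1}\bigl(\bigcup_{\omega}\{\omega\}\times J_{\omega}\bigr)=\bigcup_{\omega}\{\omega\}\times J_{\omega}$ shows $U$ meets $\bigcup_{\omega}\{\omega\}\times J_{\omega}$, so $x\in J(\tilde f)$. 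For (iii), $\pi_{\Chat}(J(\tilde f))\subseteq J(G)$ because $J_{\omega}\subseteq J(G)$ for every $\omega$ (a point of normality for all of $G$ is, a fortiori, one for the subfamily $(f_{\omega_{|n}})_{n}$) and $J(G)$ is closed; conversely, for each finite word $w$ one has $J_{\overline{w}}=J(f_{w})$ (compare the iteration of $f_{w}$ with the non-autonomous sequence along the periodic word $\overline{w}$), whence $\bigcup_{\omega}J_{\omega}\supseteq\bigcup_{g\in G}J(g)$, and the density $J(G)=\overline{\bigcup_{g\in G}J(g)}$, valid when $\#J(G)\ge3$, gives $\pi_{\Chat}(J(\tilde f))=\overline{\bigcup_{\omega}J_{\omega}}=J(G)$. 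Finally, (iv) is obtained from a Montel-type density argument carried out directly on $J(\tilde f)$, paralleling the classical proof that repelling cycles are dense in the Julia set: over a periodic word $\overline{w}$ the skew product restricts to the ordinary iteration of $f_{w}$, whose repelling periodic points are dense in $J(f_{w})=J_{\overline{w}}$, and periodic words are dense in $I^{\N}$; perfectness is then immediate since repelling periodic points of $\tilde f$ are non-isolated in $J(\tilde f)$.

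The step I expect to be the main obstacle is the transition from the fibrewise statements to (iii) and (iv): one must verify that the closure defining $J(\tilde f)$ does not push the $\Chat$-projection beyond $J(G)$, and one must produce repelling periodic points densely and uniformly over the whole (uncountable) symbol space $I^{\N}$ rather than only fibre by fibre. This is precisely where compactness of $I^{\N}$ together with the openness and finite-to-one-ness of $\tilde f$ do the work. Since all of this is carried out in detail in \cite[Proposition 3.2]{MR1767945}, in the present paper we simply refer to that reference.
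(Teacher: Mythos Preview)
Your proposal and the paper take the same approach: both defer to \cite[Proposition 3.2]{MR1767945} rather than giving a self-contained argument, and your closing sentence makes this explicit. The paper in fact gives no sketch at all---the entire ``proof'' is the single sentence preceding the proposition referring to that citation---so your outline goes well beyond what the paper does.

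One point worth flagging: because the statement you were shown cuts off at ``Then we have the following,'' you had to guess at the enumerated items, and your list (i)--(iv) does not match the paper's actual items (1)--(3). The paper's items are: (1) the fibrewise relations $\tilde f(J^{\omega})=J^{\sigma\omega}$ and $(\tilde f|_{\pi_1^{-1}(\omega)})^{-1}(J^{\sigma\omega})=J^{\omega}$; (2) complete invariance of $J(\tilde f)$ and $F(\tilde f)$; and (3) under $\card(J(G))\ge 3$, the characterisation $J(\tilde f)=\bigcap_{n\ge 0}\tilde f^{-n}(I^{\N}\times J(G))$ together with $\pi_{\Chat}(J(\tilde f))=J(G)$. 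Your sketch covers (1) and (2) correctly and handles the projection half of (3), but you omit the intersection characterisation in (3), and you include two items---openness/finite-to-one-ness of $\tilde f$, and density of repelling periodic points with perfectness of $J(\tilde f)$---that are not part of this proposition at all (the latter appears separately in the paper as Lemma~\ref{lem:ratsemi-facts-basic}). None of this affects correctness, since the cited reference covers everything, but if you intend the sketch to stand in for a proof you should adjust the list of targets accordingly.
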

\begin{enumerate}
\item \label{enu:basic-facts-1}$\tilde{f}\left(J^{\omega}\right)=J^{\sigma\omega}$ and $(\tilde{f}_{|\pi_{1}^{-1}\left(\omega\right)})^{-1}\left(J^{\sigma\omega}\right)=J^{\omega}$, for each $\omega\in I^{\N}$. 
\item \label{enu:basic-facts-2}$\tilde{f}\left(J\left(\tilde{f}\right)\right)=J\left(\tilde{f}\right),$ $\tilde{f}^{-1}\left(J\left(\tilde{f}\right)\right)=J\left(\tilde{f}\right)$, $\tilde{f}\left(F\left(\tilde{f}\right)\right)=F\left(\tilde{f}\right),$ $\tilde{f}^{-1}\left(F\left(\tilde{f}\right)\right)=F\left(\tilde{f}\right)$. 
\item \label{enu:basic-facts-3}Let $G=\left\langle f_{i}:i\in I\right\rangle $ and suppose that $\card\left(J\left(G\right)\right)\ge3$. Then we have $J\left(\tilde{f}\right)=\bigcap_{n\in\N_{0}}\tilde{f}^{-n}\left(I^{\N}\times J\left(G\right)\right)$ and $\pi_{\Chat}\left(J\left(\tilde{f}\right)\right)=J\left(G\right)$. Here, $\N_0 := \N \cup \{ 0\}$.\end{enumerate}
\begin{defn}
Let $G$ be a rational semigroup and let  $z\in\Chat$. The backward orbit $G^{-}\left(z\right)$ of $z$ and the set of exceptional points $E\left(G\right)$ are defined by $G^{-}\left(z\right):=\bigcup_{g\in G}g^{-1}\left(z\right)$ and $E\left(G\right):=\left\{ z\in\Chat:\card\left(G^{-}\left(z\right)\right)<\infty\right\} $.  We say that  a set $A\subset \Chat$ is $G$-backward invariant, if $g^{-1}(A)\subset A$, for each $g\in G$, and we say that  $A$ is  $G$-forward  invariant, if $g(A)\subset A$, for each $g\in G$. 
\end{defn}
The following is proved in \cite{MR1397693} (see also \cite[Lemma 2.3]{MR1767945}, \cite{ MR2900562}). 
\begin{lem}
\label{lem:ratsemi-facts-basic}The following holds for a rational semigroup $G$. \renewcommand{\theenumi}{\alph{enumi}} \begin{enumerate} \item $F(G)$ is $G$-forward invariant and $J(G)$ is $G$-backward invariant.  \item \label{enu:perfectset}If $\card\left(J\left(G\right)\right)\ge3$, then $J\left(G\right)$ is a perfect set. \item \label{enu:atmost-two-exceptionalpoints}If $\card\left(J\left(G\right)\right)\ge3$, then $\card\left(E\left(G\right)\right)\le2$.  \item \label{enu:preimages-dense}If $z\in\Chat\setminus E\left(G\right)$, then $J(G)\subset \overline{G^{-}\left(z\right)}.$ In particular, if $z\in J\left(G\right)\setminus E\left(G\right)$ then $\overline{G^{-}\left(z\right)}=J\left(G\right)$.  \item \label{enu:julia-is-smallestbackwardinvariant}If $\card\left(J\left(G\right)\right)\ge3$, then $J\left(G\right)$ is the smallest closed set containing at least three points which is $G$-backward invariant. \item \label{enu:densityofrepellingfixedpoints}If $\card\left(J\left(G\right)\right)\ge3$, then  \[ J\left(G\right)=\overline{\left\{ z\in\Chat:z\text{ is a repelling fixed point of some }g\in G\right\} }=\overline{\bigcup_{g\in G}J\left(g\right)}, \] where the closure is taken in $\Chat$.  \end{enumerate}
\end{lem}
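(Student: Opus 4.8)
The plan is to derive all six statements from part (e) together with Montel's normality criterion, proving (e) first. Begin with (a): every $g\in\Rat$ is an open mapping and $G\circ G\subseteq G$, so if $G$ is normal on a neighbourhood $U$ of $z$ and $g\in G$, then $g(U)$ is a neighbourhood of $g(z)$ on which every $h\in G$ factors through $h\circ g$ (along local branches of $g$); local uniform convergence of subsequences of $\{h\circ g:h\in G\}$ on $U$ therefore transfers to $g(U)$, so $g(z)\in F(G)$. Hence $F(G)$ is $G$-forward invariant and $J(G)$ is $G$-backward invariant.

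Next prove (e): if $A\subseteq\Chat$ is closed, $G$-backward invariant and $\card(A)\ge 3$, then $\Chat\setminus A$ is open and $G$-forward invariant, so the family $G$ restricted to $\Chat\setminus A$ omits the at least three points of $A$ and is normal by Montel, giving $\Chat\setminus A\subseteq F(G)$, i.e. $J(G)\subseteq A$; with (a) this is exactly the minimality assertion. Then (d) follows: $h^{-1}(G^{-}(z))=\bigcup_{g\in G}(g\circ h)^{-1}(z)\subseteq G^{-}(z)$, and $\overline{G^{-}(z)}$ is $G$-backward invariant as well (lift convergent sequences through the open generators); for $z\notin E(G)$ this set is infinite, so (e) gives $J(G)\subseteq\overline{G^{-}(z)}$, with equality when $z\in J(G)$ by (a).

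For (b), the derived set $D$ of $J(G)$ is closed, contained in $J(G)$, and $G$-backward invariant (again by lifting sequences through the finite-to-one generators); if $\card(D)\ge 3$ then (e) forces $D=J(G)$, so $J(G)$ is perfect, and the remaining case $\card(D)\le 2$ is excluded when $\card(J(G))\ge 3$ via the classical structure — either some generator has degree $\ge 2$, whose single-map Julia set lies in $J(G)$ and is infinite and perfect, or $G\subseteq\Aut(\Chat)$ is a non-elementary group with perfect limit set. For (c): whenever $\card(G^{-}(z))<\infty$ one has $\bigcup_{n\ge 0}g^{-n}(z)\subseteq\{z\}\cup G^{-}(z)$, so $E(G)\subseteq E(\langle g\rangle)$ for every generator $g$; taking $g$ of degree $\ge 2$ (classically $\card(E(\langle g\rangle))\le 2$), or invoking the Kleinian case when all generators have degree $1$, yields $\card(E(G))\le 2$.

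Finally (f): the inclusion $\overline{\bigcup_{g\in G}J(g)}\subseteq J(G)$ is immediate since $\langle g\rangle\subseteq G$ gives $F(G)\subseteq F(g)$, and repelling fixed points of $g\in G$ lie in $J(g)\subseteq J(G)$. For $J(G)\subseteq\overline{\{\text{repelling fixed points of elements of }G\}}$ I would pick a generator $g_{0}$ of degree $\ge 2$, choose a repelling periodic point $z_{0}\in J(g_{0})\setminus E(G)$ (possible since $J(g_{0})$ carries infinitely many such points while $\card(E(G))\le 2$), pass to a power to make $z_{0}$ a repelling fixed point of some $g_{1}\in G$, use (d) to write $J(G)=\overline{G^{-}(z_{0})}$, and then run a perturbation argument producing a repelling fixed point of a suitable element of $G$ arbitrarily close to each point of $G^{-}(z_{0})$; the case in which no generator has degree $\ge 2$ reduces to the classical density of loxodromic fixed points in the limit set of a non-elementary Kleinian group. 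I expect (f) to be the main obstacle — both the perturbation argument near preimages and the Kleinian case split require real work — while a recurring secondary technicality is the backward invariance of the auxiliary sets $\overline{G^{-}(z)}$, $D$ and $E(G)$, which must be verified by lifting approximating sequences through the (generally non-injective) generators. All of these details are carried out in \cite{MR1397693}.
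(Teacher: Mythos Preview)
The paper gives no proof of this lemma at all; it simply records that the result is proved in \cite{MR1397693} (with further references to \cite{MR1767945} and \cite{MR2900562}). Your sketch therefore already goes beyond the paper's own treatment, and its overall architecture---prove the Montel-based minimality statement (e) first, then derive (b), (c), (d) from it, and handle (f) via density of preimages plus a perturbation to repelling fixed points---is exactly the strategy of \cite{MR1397693}, to which you correctly defer for the details.

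Two small points of imprecision are worth flagging. In (b) and (c) your fallback to ``$G\subseteq\Aut(\Chat)$ is a non-elementary group with perfect limit set'' is not quite right as stated: $G$ is only a \emph{semigroup}, and the identification of $J(G)$ with the limit set of the group it generates is itself something that needs to be argued (Hinkkanen--Martin handle the M\"obius case directly rather than by quoting Kleinian-group facts). Second, in (a) and in the backward-invariance checks for $\overline{G^{-}(z)}$ and the derived set $D$, the lifting of convergent sequences through a generator must be done through an arbitrary element $g\in G$, not a ``generator'' in any fixed generating set, since the lemma is stated for general rational semigroups; your arguments go through unchanged once phrased this way. None of this affects the correctness of your plan, and you already identify (f) as the part requiring the most work.
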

For a holomorphic map $h:\Chat\rightarrow\Chat$ and $z\in\Chat$, the norm of the derivative of $h$ at $z\in\Chat$ with respect to the spherical metric is denoted by $\left\Vert h'\left(z\right)\right\Vert $. 
\begin{defn}
\label{def:expanding}Let $f=\left(f_{i}\right)_{i\in I}\in\left(\Rat\right)^{I}$
and and let $\tilde{f}:I^{\N}\times\Chat\rightarrow I^{\N}\times\Chat$ denote the associated skew product.  For each $n\in\N$ and $\left(\omega,z\right)\in J\left(\tilde{f}\right)$, we set $\left(\tilde{f}^{n}\right)'\left(\omega,z\right):=(f_{\omega |_{n}})'(z)$.  We say that  $\tilde{f}$ is expanding along fibers if $J\left(\tilde{f}\right)\neq\emptyset$ and if there exist constants $C>0$ and $\lambda>1$ such that for all $n\in\N$,  \[ \inf_{\left(\omega,z\right)\in J\left(\tilde{f}\right)}\Vert\left(\tilde{f}^{n}\right)'\left(\omega,z\right)\Vert\ge C\lambda^{n}, \] where $\Vert\left(\tilde{f}^{n}\right)'\left(\omega,z\right)\Vert$ denotes the norm of the derivative of $f_{\omega_{n}}\circ f_{\omega_{n-1}}\circ\dots\circ f_{\omega_{1}}$ at $z$ with respect to the spherical metric. $f=(f_i)_{i\in I}$ is called expanding if $\tilde{f}$ is expanding along fibers. Also, $G=\langle f_i :i\in I \rangle$ is called expanding if $f=(f_i)_{i\in I}$ is expanding.\end{defn}
\begin{rem}
It follows from Proposition \ref{prop:expandingness-criteria} below
that, for a rational semigroup $G=\left\langle f_{i}:i\in I\right\rangle $,
the notion of expandingness is independent of the choice of the generator
system.
\end{rem}
The following lemma was proved in \cite[Theorem 2.14]{MR1827119}
(see also Proposition \ref{prop:expandingness-criteria} below).
\begin{lem}
\label{lem:exp-implies-compactfibres}If \foreignlanguage{english}{\textup{$f=\left\langle f_{i}:i\in I\right\rangle $
is expanding, then we have $J\left(\tilde{f}\right)=\bigcup_{\omega\in I^{\N}}J^{\omega}$. }}\end{lem}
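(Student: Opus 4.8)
The inclusion $\bigcup_{\omega\in I^{\N}}J^{\omega}\subseteq J(\tilde f)$ is immediate, since by definition $J(\tilde f)$ is the closure of that union, so the whole content is the reverse inclusion: I would show that the union is already closed, i.e.\ that every $(\omega,z)\in J(\tilde f)$ in fact lies in $J^{\omega}$. Because $I^{\N}\times\Chat$ is metrizable (both factors are), I can realise $(\omega,z)$ as a limit of a \emph{sequence} $(\omega^{(k)},z^{(k)})\in\bigcup_{\eta\in I^{\N}}J^{\eta}$ with $\omega^{(k)}\to\omega$ in $I^{\N}$ and $z^{(k)}\to z$ in $\Chat$; here $z^{(k)}\in J_{\omega^{(k)}}$, so in particular each $(\omega^{(k)},z^{(k)})$ lies in $J(\tilde f)$ and the fibrewise expansion estimate of Definition~\ref{def:expanding}, with its constants $C>0$ and $\lambda>1$ \emph{uniform over $J(\tilde f)$}, applies to all of them.

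The key step is to pass this estimate to the limit. Fix $n\in\N$. Since $I$ is discrete, $\omega^{(k)}\to\omega$ forces $\omega^{(k)}|_{n}=\omega|_{n}$ for all $k$ large enough, and then
\[
\bigl\Vert\bigl(f_{\omega|_{n}}\bigr)'\bigl(z^{(k)}\bigr)\bigr\Vert=\bigl\Vert\bigl(\tilde f^{n}\bigr)'\bigl(\omega^{(k)},z^{(k)}\bigr)\bigr\Vert\ge C\lambda^{n}.
\]
Letting $k\to\infty$ and using that $w\mapsto\Vert h'(w)\Vert$ is continuous on $\Chat$ for a fixed rational map $h$ (here $h=f_{\omega|_{n}}$), I obtain $\Vert(f_{\omega|_{n}})'(z)\Vert\ge C\lambda^{n}$; as $n$ was arbitrary, $\Vert(f_{\omega|_{n}})'(z)\Vert\to\infty$ as $n\to\infty$.

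It then remains to exclude $z\in F_{\omega}$. If $z\in F_{\omega}$, then $(f_{\omega|_{n}})_{n\in\N}$ is a normal family on some neighbourhood $V$ of $z$, so by Marty's theorem the spherical derivatives $\Vert(f_{\omega|_{n}})'(\cdot)\Vert$ are locally uniformly bounded on $V$, contradicting the growth just established. (Equivalently, one extracts a locally uniform limit $g$ of a subsequence $f_{\omega|_{n_{j}}}$ on $V$; $g$ is holomorphic into $\Chat$, possibly the constant $\infty$, and by the Weierstrass convergence theorem $\Vert(f_{\omega|_{n_{j}}})'(z)\Vert\to\Vert g'(z)\Vert<\infty$ — the same contradiction, the constant-$\infty$ case being handled via the fact that $w\mapsto 1/w$ is a spherical isometry.) Hence $z\in J_{\omega}$, i.e.\ $(\omega,z)\in J^{\omega}$, which gives $J(\tilde f)\subseteq\bigcup_{\omega}J^{\omega}$ and completes the proof.

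I do not anticipate a genuine obstacle: morally the argument is just that uniform fibrewise expansion forces $\Vert(\tilde f^{n})'\Vert\to\infty$ pointwise on $J(\tilde f)$, a property that is preserved under limits, while Fatou behaviour (normality) is incompatible with it. The only delicate points are (a) the reduction from a closure to sequential limits, which is legitimate by metrizability, and (b) the degenerate normal-limit case in which the subsequential limit is constant $\equiv\infty$, which Marty's theorem sidesteps cleanly. This recovers \cite[Theorem~2.14]{MR1827119}; see also Proposition~\ref{prop:expandingness-criteria} below for related characterisations of expandingness.
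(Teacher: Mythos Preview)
Your proof is correct. Note, however, that the paper does not actually prove this lemma itself: it simply cites \cite[Theorem 2.14]{MR1827119}. So there is no ``paper's own proof'' to compare against; your argument supplies a self-contained justification that the paper omits.

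One simplification worth pointing out: the detour through the approximating sequence $(\omega^{(k)},z^{(k)})$ is unnecessary. By hypothesis $(\omega,z)$ already lies in $J(\tilde f)$, and Definition~\ref{def:expanding} gives the expansion estimate on \emph{all} of $J(\tilde f)$, so you have
\[
\bigl\Vert (f_{\omega|_n})'(z)\bigr\Vert=\bigl\Vert(\tilde f^{\,n})'(\omega,z)\bigr\Vert\ge C\lambda^{n}
\]
directly, without any limiting argument. Your Marty-theorem step then finishes the proof exactly as written. The sequential approximation would only be needed if the expansion estimate were assumed merely on the dense subset $\bigcup_\eta J^\eta$ rather than on its closure $J(\tilde f)$.
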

\begin{defn}
\label{def:hyperbolic}A rational semigroup $G$ is hyperbolic if $P\left(G\right)\subset F\left(G\right)$, where $P\left(G\right)$ denotes the postcritical set of $G$ given by  \[ P\left(G\right):=\overline{\bigcup_{g\in G}\CV\left(g\right)} \] and where $\CV\left(g\right)$ denotes the set of critical values of $g$.\end{defn}
\begin{rem*}
Let $G=\langle f_i :i\in I \rangle$.  Since    $P\left(G\right)=\overline{\bigcup_{g\in G\cup \{ \id \} }g\left(\bigcup_{i\in I}\CV\left(f_{i}\right)\right)}$, we have that   $P(G)$ is $G$-forward invariant. 
\end{rem*}
In the next proposition we give necessary and sufficient conditions
for a rational semigroup to be expanding. We refer to \cite{MR1625124}
for the proofs. For $g\in \Aut(\Chat)$, we say that $g$ is \emph{loxodromic}
if $g$ has exactly two fixed points, for which the modulus of the
multipliers is not equal to one.
\begin{prop}
\label{prop:expandingness-criteria}Let $f=\left(f_{i}\right)_{i\in I}\in\left(\Rat\right)^{I}$
and \foreignlanguage{english}{\textup{$G=\left\langle f_{i}:i\in I\right\rangle $. }}
\begin{enumerate}
\item \label{enu:exp-implies-hyperbolicloxodromic}If $f$ is expanding,
then $G$ is hyperbolic, each element $g\in G$ with $\deg\left(g\right)=1$
is loxodromic, and there exists a $G$-forward invariant non-empty
compact subset of $F\left(G\right)$. 
\item If there exists $g\in G$ with $\deg\left(g\right)\ge2$, if $G$
is hyperbolic and if each element $g\in G$ with $\deg\left(g\right)=1$
is loxodromic, then $f$ is expanding.
\item If $G\subset\Aut(\Chat)$ and each element $g\in G$ is loxodromic,
and if there exists a $G$-forward invariant non-empty compact subset
of $F\left(G\right)$, then $f$ is expanding. 
\end{enumerate}
\end{prop}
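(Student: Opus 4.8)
The plan is to recover all three statements from the single uniform derivative bound that defines expansion along fibres, using the fibre structure of $J(\tilde{f})$ given by Proposition~\ref{prop:basic-facts} and Lemma~\ref{lem:exp-implies-compactfibres}; full details are in \cite{MR1625124}. Throughout, for $\omega\in I^{\N}$ and $z\in\Chat$ I write $z_{0}:=z$ and $z_{k}:=f_{\omega_{k}}(z_{k-1})$, so that $\|(\tilde{f}^{n})'(\omega,z)\|=\prod_{k=1}^{n}\|f'_{\omega_{k}}(z_{k-1})\|$, and I use that if $(\omega,z)\in J(\tilde{f})$ then $z_{k}\in J(G)$ for every $k$ (since $\tilde{f}(J(\tilde{f}))=J(\tilde{f})$ and $\pi_{\Chat}(J(\tilde{f}))\subset J(G)$), and that under expansion $J(\tilde{f})=\bigcup_{\omega}J^{\omega}$ is compact, whence $\bigcup_{\omega}J_{\omega}=J(G)$.

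For (1), assume $f$ is expanding. Since $\|(\tilde{f}^{n})'(\omega,z)\|\ge C\lambda^{n}>0$, no composition $f_{\omega|_{n}}$ has a critical point on the fibre $J_{\omega}$; writing $\mathrm{Crit}(f_{\omega|_{n}})=\bigcup_{k=1}^{n}f_{\omega|_{k-1}}^{-1}\bigl(\mathrm{Crit}(f_{\omega_{k}})\bigr)$ and using $\tilde{f}^{k}(J^{\omega})=J^{\sigma^{k}\omega}$ and $\tilde{f}^{-1}(J(\tilde{f}))=J(\tilde{f})$, one gets $J_{\omega}\cap\mathrm{Crit}(f_{\omega_{1}})=\emptyset$ for all $\omega$, hence $\CV(f_{i})\cap\bigcup_{\omega}J_{\omega}=\CV(f_{i})\cap J(G)=\emptyset$ for all $i$. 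As $F(G)$ is $G$-forward invariant (Lemma~\ref{lem:ratsemi-facts-basic}), $\bigcup_{g\in G\cup\{\id\}}g\bigl(\bigcup_{i}\CV(f_{i})\bigr)\subset F(G)$, and the technical heart of this direction is to upgrade this to $P(G)\subset F(G)$ (i.e.\ $G$ hyperbolic) by using the expansion to control the $G$-orbits of the finitely many critical values near $J(G)$. Next, for $g=f_{\omega|_{m}}$ with $\omega$ the periodic word of period $m$ one has $J(\langle g\rangle)\subset J_{\omega}$; if $\deg(g)=1$ and $g$ were parabolic, its fixed point $p$ lies in $J(\langle g\rangle)\subset J_{\omega}$ but $\|(\tilde{f}^{mn})'(\omega,p)\|=1$ for all $n$, contradicting the expansion, while an elliptic $g$ is excluded using hyperbolicity and the $G$-forward invariant Fatou compact described next; hence every degree-one element is loxodromic. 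Finally, that $G$-forward invariant nonempty compact $K\subset F(G)$ is $P(G)$ itself when some $\deg(f_{i_{0}})\ge2$ (nonempty because a factorisation of $g_{0}$ then uses a degree-$\ge2$ generator, so $\bigcup_{i}\CV(f_{i})\neq\emptyset$; inside $F(G)$ by hyperbolicity), and is provided directly by the expansion when $G\subset\Aut(\Chat)$, e.g.\ $K:=\Chat\setminus W$ for a suitable open $W\supset J(G)$ with $g^{-1}(W)\subset W$ for all $g\in G$.

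For (2) and (3), I would argue the converse via the hyperbolic metric. Take $K\subset F(G)$ nonempty, compact and $G$-forward invariant — $K=P(G)$ under the hypotheses of (2), the given set under those of (3) — and, after adjoining a finite $G$-forward invariant subset of $F(G)$ in the degenerate case $\card(K)\le2$, assume $\Omega:=\Chat\setminus K$ is hyperbolic. Then $J(G)\subset\Omega$, the invariance $f_{i}(K)\subset K$ gives $f_{i}^{-1}(\Omega)\subseteq\Omega$, and $\mathrm{Crit}(f_{i})\cap f_{i}^{-1}(\Omega)=\emptyset$ (critical values lie in $K$ in case (2); $f_{i}$ is injective in case (3)), so each $f_{i}$ restricts to an unramified holomorphic covering of $\Omega$ and hence pulls $\rho_{\Omega}$ back to the hyperbolic metric $\rho_{f_{i}^{-1}(\Omega)}$. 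Schwarz--Pick for the inclusion $f_{i}^{-1}(\Omega)\hookrightarrow\Omega$ then gives $\|f'_{i}(z)\|_{\rho_{\Omega}}=\rho_{f_{i}^{-1}(\Omega)}(z)/\rho_{\Omega}(z)\ge1$ on $f_{i}^{-1}(\Omega)$. I would upgrade this to $\lambda_{0}:=\inf_{i\in I}\inf_{z}\|f'_{i}(z)\|_{\rho_{\Omega}}>1$, the inner infimum over the compact set of $z\in J(G)$ with $f_{i}(z)\in J(G)$; then, since $\rho_{\Omega}$ and the spherical metric are comparable on the compact set $J(G)$, a telescoping estimate along the $\tilde{f}$-orbit of $(\omega,z)\in J(\tilde{f})$ gives $\|(\tilde{f}^{n})'(\omega,z)\|\ge C_{0}\lambda_{0}^{n}$ for some $C_{0}>0$, so $f$ is expanding.

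The main obstacle, common to both directions, is controlling the words along which the Schwarz--Pick inequality fails to be strict — equivalently, along which $f_{\omega|_{n}}^{-1}(\Omega)=\Omega$ for all $n$, which is also where the exclusion of elliptic elements in (1) must be organised. By compactness, a failure of the uniform bound $\lambda_{0}>1$ produces such a word, and this can persist only through indifferent (parabolic or elliptic) M\"obius behaviour, which the loxodromicity hypothesis on degree-one elements rules out, or through a degenerate exclusion set with $\card(K)\le2$, handled by the preliminary enlargement of $K$. Performing that enlargement so that $K$ stays compact, stays inside $F(G)$, stays $G$-forward invariant, and is large enough that $\Chat\setminus K$ is hyperbolic and each $f_{i}$ still restricts to an unramified covering of it, together with the orbit-control argument needed to deduce hyperbolicity in (1), are the parts I expect to require the most care.
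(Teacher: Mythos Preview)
The paper does not prove this proposition; the sentence preceding it reads ``We refer to \cite{MR1625124} for the proofs.'' Your outline is the standard one found there: derivative analysis on fibres for (1), and the hyperbolic metric on $\Omega=\Chat\setminus K$ together with Schwarz--Pick for (2) and (3). The technical points you flag---passing from $\bigcup_{g}g(\bigcup_i\CV(f_i))\subset F(G)$ to its closure $P(G)\subset F(G)$, and upgrading $\|f_i'\|_{\rho_\Omega}\ge1$ to a strict uniform bound $\lambda_0>1$ on the relevant compact set---are exactly the places where work is required, and your identification of them is accurate.

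There is one genuine gap: your exclusion of elliptic elements in (1). The parabolic case is fine, since the fixed point lies in $J(\langle g\rangle)=J_{\bar\omega}$ and gives a point of $J(\tilde f)$ with bounded derivative. But for elliptic $g$ one has $J(\langle g\rangle)=\emptyset$, so $J_{\bar\omega}=\emptyset$ and your periodic-word argument does not apply directly; your deferral to ``hyperbolicity and the $G$-forward invariant Fatou compact'' is circular (hyperbolicity is what (1) is proving), and the remark in your final paragraph that indifferent behaviour is ``ruled out by the loxodromicity hypothesis'' applies to (2) and (3), not to (1). A correct argument in (1) goes via backward invariance of $J(G)$: if $\card(J(G))\ge3$ and $g=f_\alpha$ is elliptic with irrational rotation, then for any $z\in J(G)$ the backward $g$-orbit $\{g^{-n}(z)\}\subset J(G)$ is dense in the $g$-invariant circle $C_z$, so $C_z\subset J(G)$; since $g^n(z)\in C_z\subset J(G)$ for all $n$, Proposition~\ref{prop:basic-facts}\,(\ref{enu:basic-facts-3}) gives $(\bar\alpha,z)\in J(\tilde f)$, while $\|(g^n)'\|$ stays bounded on $C_z$ (the family $\{g^n\}$ is normal off the two fixed points), contradicting expansion. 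The finite-order case yields $\id\in G$, and then any $z\in J(G)$ with the periodic word for $\id$ gives the contradiction. The case $\card(J(G))\le2$ is handled via Lemma~\ref{lem:moebiussemigroup-expanding-implies-J-not-twoelements} and a short shift-invariance argument on the set $\{\omega:(\omega,p)\in J(\tilde f)\}$.
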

Finally, we state the following facts about the exceptional set of
an expanding rational semigroup.
\begin{lem}
\label{lem:moebiussemigroup-expanding-implies-goodexceptionalset}
Let \foreignlanguage{english}{\textup{$G=\left\langle f_{i}:i\in I\right\rangle $}}
denote an expanding rational semigroup. Suppose that $\card\left(J\left(G\right)\right)\ge3$.
Then  we have $E\left(G\right)\subset F\left(G\right)$. \end{lem}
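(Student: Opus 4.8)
The plan is to argue by contradiction. Assume $E(G)\cap J(G)\neq\emptyset$ and set $A:=E(G)\cap J(G)$. Since both $E(G)$ and $J(G)$ are $G$-backward invariant, so is $A$, and by Lemma \ref{lem:ratsemi-facts-basic} (\ref{enu:atmost-two-exceptionalpoints}) we have $1\le\card(A)\le 2$. The first thing I would prove is that each generator $f_i$ maps $A$ bijectively onto itself, i.e.\ $f_i^{-1}(A)=f_i(A)=A$ with $f_i|_A$ a bijection. Backward invariance gives $f_i^{-1}(A)\subseteq A$. For the reverse inclusions, note that $\bigcup_{j\in I}f_j^{-1}(J(G))=J(G)$ (\cite[Lemma 2.4]{MR1767945}) yields $f_j^{-1}(J(G))\subseteq J(G)$, hence $f_j(J(G))\supseteq f_j(f_j^{-1}(J(G)))=J(G)$ because $f_j$ is surjective; so every point of $A$ is the $f_j$-image of some point of $J(G)$, which by $f_j^{-1}(A)\subseteq A$ already lies in $A$. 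Thus $A\subseteq f_j(A)$, and since $A$ is finite this upgrades to $f_j(A)=A$ with $f_j|_A$ bijective, whence $f_j^{-1}(A)=f_j^{-1}(f_j(A))\supseteq A$, so $f_j^{-1}(A)=A$.

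Next I would distinguish whether some generator has degree at least $2$. Suppose $\deg(f_{i_0})\ge 2$. Counting the preimage of the set $A$ under $f_{i_0}$ with multiplicity, its total multiplicity equals $\card(A)\cdot\deg(f_{i_0})$, which is strictly larger than $\card(A)=\card(f_{i_0}^{-1}(A))$; hence some point $v\in A$ is a critical point of $f_{i_0}$, so $f_{i_0}(v)\in\CV(f_{i_0})\subseteq P(G)$. On the other hand $f_{i_0}(v)\in f_{i_0}(A)=A\subseteq J(G)$, and since $G$ is hyperbolic by Proposition \ref{prop:expandingness-criteria} (\ref{enu:exp-implies-hyperbolicloxodromic}) we have $P(G)\subseteq F(G)$; thus $f_{i_0}(v)\in F(G)\cap J(G)=\emptyset$, a contradiction.

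The remaining case $G\subseteq\Aut(\Chat)$ is the one I expect to be the main obstacle: there $P(G)\subseteq F(G)$ carries no information, and one has to extract the necessary rigidity from the facts (Proposition \ref{prop:expandingness-criteria} (\ref{enu:exp-implies-hyperbolicloxodromic})) that every element of $G$ is loxodromic and that there is a non-empty $G$-forward invariant compact set $K\subseteq F(G)$. By the first step, each $f_i$ induces a bijection of the at-most-two-point set $A$. A M\"obius map interchanging the two points of $A$ would, after normalising $A=\{0,\infty\}$ by a conjugacy, have the form $w\mapsto c/w$, an involution with both multipliers equal to $-1$, hence not loxodromic; so in fact every $f_i$ fixes each point of $A$.

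It then remains to rule out both possibilities. If $\card(A)=2$, normalising $A=\{0,\infty\}$ puts $G$ inside $\{w\mapsto\mu w\}$. If $\card(A)=1$, normalising $A=\{\infty\}$ puts $G$ inside the affine group $\{w\mapsto aw+b\}$, with every linear part of modulus $\neq 1$ by loxodromicity and with $K\subseteq\C$ since $\infty\in J(G)$; here I would subdivide: if all generators have linear part of modulus $>1$, a uniform contraction estimate near $\infty$ (legitimate because $I$ is finite) shows $G$ is normal at $\infty$, so $\infty\in F(G)$---contradicting $\infty\in J(G)$; if all have modulus $<1$, every element of $G$ is loxodromic with repelling fixed point $\infty$; otherwise $G$ contains a map $g$ with linear part of modulus $>1$, whose finite repelling fixed point $p$ is the only point of $\C$ not attracted to $\infty$ under the iterates of $g$, so $G$-invariance of the compact set $K\subseteq\C$ forces $K=\{p\}$, whence $G$ fixes $p$ as well and lies inside $\{w\mapsto\mu(w-p)+p\}$. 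In each surviving sub-case the repelling fixed point of every element of $G$ lies in one fixed set of at most two points, so $J(G)$ has at most two points by Lemma \ref{lem:ratsemi-facts-basic} (\ref{enu:densityofrepellingfixedpoints}), contradicting $\card(J(G))\ge 3$. Since every case ends in a contradiction, $E(G)\cap J(G)=\emptyset$, i.e.\ $E(G)\subseteq F(G)$.
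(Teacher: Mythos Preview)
Your proof is correct, but it follows a genuinely different route from the paper's argument.

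The paper does not split on the degree of the generators. Instead, starting from $z_0\in E(G)\cap J(G)$, it uses density of repelling fixed points to pick $g_1\in G$ with a repelling fixed point $z_1\neq z_0$; backward invariance of $E(G)$ together with $\card(E(G))\le 2$ forces $g_1^2(z_0)=z_0$, and since $g_1$ is either of degree $\ge 2$ or loxodromic (Proposition~\ref{prop:expandingness-criteria}~(\ref{enu:exp-implies-hyperbolicloxodromic})), the point $z_0$ is an \emph{attracting} fixed point of $g_1^2$. The paper then builds, from points of $J(G)$ accumulating at $z_0$, an explicit sequence in $J(\tilde{f})$ along which the derivative of high iterates of $\tilde{f}$ tends to zero, contradicting the expanding condition directly at the level of its definition.

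Your argument, by contrast, never returns to the skew product or the derivative bound. You use only the \emph{consequences} of expandingness listed in Proposition~\ref{prop:expandingness-criteria}~(\ref{enu:exp-implies-hyperbolicloxodromic}): hyperbolicity handles the case $\deg(f_{i_0})\ge 2$ via a critical-value contradiction with $P(G)\subset F(G)$, while in the M\"obius case you exploit loxodromicity (ruling out swaps of $A$) and the forward-invariant compact set $K\subset F(G)$ to force the group into a one- or two-parameter diagonal form, whereupon Lemma~\ref{lem:ratsemi-facts-basic}~(\ref{enu:densityofrepellingfixedpoints}) bounds $\card(J(G))$ by~$2$. The trade-off: the paper's proof is shorter and uniform, but leans on a claim (``$z_0$ is attracting for $g_1^2$'') that itself hides a small case analysis; your proof is longer and case-heavy, but each step is elementary and self-contained, and it makes the M\"obius case---where hyperbolicity carries no information---fully transparent.
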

\begin{proof}
Suppose for a contradiction that there exists $z_{0}\in E\left(G\right)\cap J\left(G\right)$. Since $\card\left(J\left(G\right)\right)\ge3$, it follows from the density of the repelling fixed points (Lemma \ref{lem:ratsemi-facts-basic} (\ref{enu:densityofrepellingfixedpoints})) that there exist $z_{1}\in J\left(G\right)$ and $g_{1}\in G$, such that $z_{1}\neq z_{0}$, $g_{1}\left(z_{1}\right)=z_{1}$ and $\Vert g_{1}'\left(z_{1}\right)\Vert>1$. Furthermore, we have  $\card\left(E\left(G\right)\right)\le2$ by Lemma \ref{lem:ratsemi-facts-basic} (\ref{enu:atmost-two-exceptionalpoints}). Combining with the fact that $g^{-1}\left(E\left(G\right)\right)\subset E\left(G\right)$  for each $g\in G$, we conclude that $g_{1}^{2}\left(z_{0}\right)=z_{0}$. Since $G$ is expanding, we have that either $\deg(g_1)\ge 2$ or that  $g_1$ is a  loxodromic M\"obius transformation  by Proposition  \ref{prop:expandingness-criteria} (\ref{enu:exp-implies-hyperbolicloxodromic}). Thus,  we have that  $z_{0}$ is an  attracting fixed point of $g_{1}^{2}$. Let $V$ be a neighborhood of $z_0$ and let $0<c<1$ such that $g_1^2(V)\subset V$ and $\Vert (g_1^2)'(z)\Vert <c$, for each $z\in V$.  By Lemma \ref{lem:ratsemi-facts-basic} (\ref{enu:perfectset})  there exists a sequence $(a_n)$ with $a_n \in J(G)\setminus \{ z_0\}$  such that $\lim_n a_n =z_0$.  Then there exists a sequence $(n_k)\in \N^{\N}$ tending to infinity and a sequence $(b_k)\in \Chat^{\N}$  such that  $b_k \in g_1^{-2n_k}(a_{n_k})$ and  $b_k \in V$. Hence, $\lim_k \Vert (g_1^{2n_k})'(b_k)\Vert  \le\lim_k c^{n_k}=0$.   Moreover, write  $g_{1}^{2}=f_\alpha$,  for some $m\in\N$ and $\alpha\in I^{m}$, and denote by $\alpha^n:=(\alpha \dots \alpha)\in I^{mn}$ the $n$-fold concatenation of $\alpha$. Let $(\beta_k)\in I^\N$ with $(\beta_k, a_{n_k})\in J(\tilde{f})$.  Then $\big(\alpha^{n_k}\beta_k, b_k) \in J(\tilde{f})$.  This contradicts that $G$ is expanding and finishes the proof.\end{proof}
\begin{lem}
\label{lem:moebiussemigroup-expanding-implies-J-not-twoelements}
Let \foreignlanguage{english}{\textup{$G=\left\langle f_{i}:i\in I\right\rangle $}}
denote an expanding rational semigroup. Suppose that $1\le \card\left(J\left(G\right)\right) \le 2$.
Then  we have $\card\left(J\left(G\right)\right)=1$. \end{lem}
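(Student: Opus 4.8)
The plan is to argue by contradiction: suppose $\card\left(J\left(G\right)\right)=2$, say $J\left(G\right)=\left\{ a,b\right\} $ with $a\neq b$, and progressively rigidify $G$ until the conclusions of Proposition \ref{prop:expandingness-criteria} become untenable. First I would use that $J\left(G\right)$ is $G$-backward invariant (Lemma \ref{lem:ratsemi-facts-basic}(a)), so $g^{-1}\left(\left\{ a,b\right\} \right)\subseteq\left\{ a,b\right\} $ for every $g\in G$. Since each $g\in G$ is a non-constant, hence surjective, rational map, $g^{-1}\left(a\right)$ and $g^{-1}\left(b\right)$ are non-empty disjoint subsets of the two-point set $\left\{ a,b\right\} $; therefore each is a singleton, $g$ permutes $\left\{ a,b\right\} $, and $g$ is totally ramified over both $a$ and $b$. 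In particular, if some $f_{i_{0}}$ had $\deg\left(f_{i_{0}}\right)\ge2$, then the unique $f_{i_{0}}$-preimage of $a$ would be a critical point of $f_{i_{0}}$, so $a\in\CV\left(f_{i_{0}}\right)\subseteq P\left(G\right)$; but $G$, being expanding, is hyperbolic by Proposition \ref{prop:expandingness-criteria}(1), i.e.\ $P\left(G\right)\subseteq F\left(G\right)$, contradicting $a\in J\left(G\right)$. Hence $\deg\left(f_{i}\right)=1$ for all $i\in I$, so $G\subseteq\Aut\bigl(\Chat\bigr)$, and by Proposition \ref{prop:expandingness-criteria}(1) every element of $G$ is loxodromic.

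Next I would rule out that any generator $f_{i}$ swaps $a$ and $b$: if $f_{i}\left(a\right)=b$, then $a$ is not a fixed point of the loxodromic map $f_{i}$, so $f_{i}^{n}\left(a\right)$ must converge to the attracting fixed point of $f_{i}$; but $f_{i}^{n}\left(a\right)$ alternates between $a$ and $b$, a contradiction. Thus every $f_{i}$, and hence every element of $G$, fixes both $a$ and $b$. Conjugating $f$ by a Möbius transformation sending $a\mapsto0$ and $b\mapsto\infty$ (which changes neither $\card\left(J\left(G\right)\right)$ nor the expanding property, the conjugating map being a diffeomorphism of the compact $\Chat$), I may assume $J\left(G\right)=\left\{ 0,\infty\right\} $, $F\left(G\right)=\C\setminus\left\{ 0\right\} $, and $f_{i}\left(z\right)=\mu_{i}z$ with $\mu_{i}\in\C$, $\left|\mu_{i}\right|\neq1$; consequently every $g\in G$ has the form $z\mapsto\mu_{g}z$ with $\mu_{g}$ a finite product of the numbers $\mu_{i}$.

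Finally, I would invoke the remaining assertion of Proposition \ref{prop:expandingness-criteria}(1): there exists a non-empty compact $G$-forward invariant set $K\subseteq F\left(G\right)=\C\setminus\left\{ 0\right\} $. Writing $m:=\min_{w\in K}\left|w\right|>0$ and fixing $w_{0}\in K$ with $\left|w_{0}\right|=m$, forward invariance yields $\mu_{g}w_{0}\in g\left(K\right)\subseteq K$, hence $\left|\mu_{g}\right|m=\left|\mu_{g}w_{0}\right|\ge m$, so $\left|\mu_{g}\right|\ge1$ for every $g\in G$; in particular $\left|\mu_{i}\right|>1$ for every $i\in I$. But then, in the coordinate $w=1/z$ near $\infty$, every $g\in G$ acts as $w\mapsto w/\mu_{g}$ with $\left|1/\mu_{g}\right|\le\bigl(\min_{i}\left|\mu_{i}\right|\bigr)^{-1}<1$, so $G$ maps a fixed closed disc about $\infty$ into itself and is therefore normal near $\infty$ by Montel's theorem. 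Thus $\infty\in F\left(G\right)$, contradicting $\infty\in J\left(G\right)$; this contradiction forces $\card\left(J\left(G\right)\right)=1$.

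The step I expect to need most care is the rigidification chain, specifically ruling out $\deg\left(f_{i_{0}}\right)\ge2$ via the total-ramification/critical-value observation together with hyperbolicity, and ruling out the swap case via the loxodromic dynamics. Once these two points are settled, the semigroup is so rigid (scalar multiplications $z\mapsto\mu z$ with all $\left|\mu_{i}\right|>1$) that the $G$-forward invariant compactum in $F\left(G\right)$ delivers the contradiction with no analytic estimate needed; the non-injectivity of the generators, which complicates the other results in the paper, plays no role here.
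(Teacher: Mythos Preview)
Your proof is correct and follows the same overall architecture as the paper's: assume $J(G)=\{a,b\}$, reduce to $G\subset\Aut(\Chat)$ with each generator of the form $z\mapsto\mu_i z$ after conjugation, force all $|\mu_i|>1$, and then contradict $\infty\in J(G)$. The execution differs in two places worth noting. First, the paper asserts without argument that every $g\in G$ fixes both $a$ and $b$; you explicitly dispose of the swap case via the loxodromic dynamics (a point the paper glosses over). Second, to obtain $|\mu_i|>1$ for all $i$, the paper appeals directly to the expanding estimate on $J(\tilde{f})$, whereas you instead exploit the $G$-forward invariant compact set $K\subset F(G)=\C\setminus\{0\}$ guaranteed by Proposition~\ref{prop:expandingness-criteria}(1) and read off $|\mu_i|\ge1$ from $\min_{w\in K}|w|$. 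Your route avoids any manipulation of the skew product Julia set and is arguably more elementary; the paper's route is shorter once one is willing to locate suitable points in $J(\tilde{f})$. Your argument for ruling out $\deg(f_{i_0})\ge2$ via hyperbolicity and critical values is correct, though the quicker observation (implicit in the paper's ``clearly'') is that $\deg(g)\ge2$ forces $J(g)$ to be perfect, hence $\card(J(G))\ge\card(J(g))\ge3$.
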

\begin{proof}
Clearly, we have $G\subset\Aut(\Chat)$ and  each element of $G$ is loxodromic by Proposition  \ref{prop:expandingness-criteria} (\ref{enu:exp-implies-hyperbolicloxodromic}).  Now, suppose by way of contradiction that $J\left(G\right)=\left\{ a,b\right\} $ with $a\neq b$. Without loss of generality, we may assume that $a=0$ and $b=\infty$. Since $J(G)$ is $G$-backward invariant, we have $g\left(a\right)=a$ and $g\left(b\right)=b$ for each $g\in G$. Thus, there exists a sequence $\left(c_{i}\right)\in\C^{I}$ such that $f_{i}\left(z\right)=c_{i}z$, for each $z\in\Chat$ and $i\in I$. We may assume that there exists $i_0 \in I$ such that $\Vert f_{i_0}'(a) \Vert >1$. Since $G$ is expanding with respect to $\left\{ f_{i}:i\in I\right\} $, there exists a constant $c_0>1$ such that $\Vert f_{i}'\left(a\right)\Vert =\left|c_{i}\right|\ge c_0>1$,  for all $i\in I$. 
Hence, we have  $\Vert f_{i}'\left(b\right)\Vert  \le c_0^{-1}<1$, for all $i\in I$, which gives that $b\in F\left(G\right)$. This contradiction proves the lemma. 
\end{proof}

\section{\label{sec:Thermodynamic-formalism-for}Thermodynamic formalism for
expanding rational semigroups}

In this section we collect some of the main results from the thermodynamic
formalism in the context of expanding rational semigroups. It was
shown in \cite{MR2153926} that the skew product of a finitely generated
expanding rational semigroup is an open distance expanding map. We
refer to \cite{MR2656475} or the classical references \cite{bowenequilibriumMR0442989,MR511655,MR648108}
for general results on thermodynamic formalism for expanding maps.

\subsection{Conformal measure and equilibrium states}

We first give the definition of conformal measures which are useful
to investigate geometric properties of the Julia set of a rational
semigroup. A general notion of conformal measure was introduced in
\cite{MR1014246}. For results on conformal measures in the context
of expanding rational semigroups we refer to \cite{MR1625124,MR2153926}.
\begin{defn}
\label{conformal-measure}Let $f=\left(f_{i}\right)_{i\in I}\in\left(\Rat\right)^{I}$
and let $\tilde{\varphi}:J\left(\tilde{f}\right)\rightarrow\R$ be
Borel measurable. A Borel probability measure $\tilde{\nu}$ on $J\left(\tilde{f}\right)$
is called \emph{$\tilde{\varphi}$-conformal} (for $\tilde{f}$) if,
for each Borel set $A\subset J\left(\tilde{f}\right)$ such that $\tilde{f}\big|_{A}$
is injective, we have 
\[
\tilde{\nu}\bigl(\tilde{f}\left(A\right)\bigr)=\int_{A}\e^{-\tilde{\varphi}}d\tilde{\nu}.
\]

\end{defn}
Next we give the fundamental definitions of topological pressure and
equilibrium states.
\begin{defn}
Let $f=\left(f_{i}\right)_{i\in I}\in\left(\Rat\right)^{I}$ and let
$\tilde{\varphi}:J\left(\tilde{f}\right)\rightarrow\R$ be continuous.
The\emph{ topological pressure} $\mathcal{P}\left(\tilde{\varphi},\tilde{f}\right)$
of $\tilde{\varphi}$ with respect to $\tilde{f}: J(\tilde{f})\rightarrow J(\tilde{f})$ is given by 
\[
\mathcal{P}\left(\tilde{\varphi},\tilde{f}\right):=\sup\left\{ h\left(\tilde{m}\right)+\int\tilde{\varphi}d\tilde{m}:\tilde{m}\in\mathcal{M}_{e}\left(\tilde{f}\right)\right\} ,
\]
where $\mathcal{M}_{e}\left(\tilde{f}\right)$ denotes the set of
all $\tilde{f}$-invariant ergodic Borel probability measures on $J\left(\tilde{f}\right)$
and $h\left(\tilde{m}\right)$ refers to the measure-theoretic entropy
of $\left(\tilde{f},\tilde{m}\right)$. An ergodic $\tilde{f}$-invariant
Borel probability measure $\tilde{\mu}$ on $J\left(\tilde{f}\right)$
is called an \emph{equilibrium state} for $\tilde{\varphi}$ if 
\[
\mathcal{P}\left(\tilde{\varphi},\tilde{f}\right)=h(\tilde{\mu})+\int\tilde{\varphi}d\tilde{\mu}.
\]

\end{defn}
The following lemma guarantees existence and uniqueness of conformal
measures and equilibrium states for H\"older continuous potentials.
The lemma can be proved similarly as in \cite[Lemma 3.6 and  3.10]{MR2153926}.
For the uniqueness of the equilibrium state, see e.g. \cite{MR2656475}. 

For $\omega,\kappa\in I^{\N}$, we set $d_{0}\left(\omega,\tau\right):=2^{-\left|\omega\wedge\kappa\right|}$,
where $\omega\wedge\kappa$ denotes the longest common initial block
of $\omega$ and $\kappa$. The Julia set $J\left(\tilde{f}\right)$
is equipped with the metric $\tilde{d}$ which is for $\left(\omega,x\right),\left(\kappa,y\right)\in J\left(\tilde{f}\right)$
given by $\tilde{d}\left(\left(\omega,x\right),\left(\kappa,y\right)\right):=d_{0}\left(\omega,\kappa\right)+d\left(x,y\right)$,
where $d$ denotes the spherical distance on $\Chat$. We say that
$\tilde{\varphi}:J\left(\tilde{f}\right)\rightarrow\R$ is H\"older
continuous if there exists $\theta>0$ such that 
\[
\sup\left\{ \frac{d\left(\tilde{\varphi}\left(\omega,x\right),\tilde{\varphi}\left(\kappa,y\right)\right)}{\tilde{d}\left(\left(\omega,x\right),\left(\kappa,y\right)\right)^{\theta}}:\left(\omega,x\right),\left(\kappa,y\right)\in J\left(\tilde{f}\right),\left(\omega,x\right)\neq\left(\kappa,y\right)\right\} <\infty.
\]

\begin{lem}
\label{lem:existence-skew-product-conformalmeasure}Let $f=\left(f_{i}\right)_{i\in I}\in\left(\Rat\right)^{I}$
be expanding. Let $\tilde{\varphi}:J\left(\tilde{f}\right)\rightarrow\R$
be H\"older continuous with $\mathcal{P}\left(\tilde{\varphi},\tilde{f}\right)=0$.
Then we have the following.
\begin{enumerate}
\item There exists a unique $\tilde{\varphi}$-conformal measure $\tilde{\nu}$
on $J\left(\tilde{f}\right)$. 
\item There exists a unique continuous function $\tilde{h}:J\left(\tilde{f}\right)\rightarrow\R^{+}$
such that the probability measure $\tilde{\mu}:=\tilde{h}d\tilde{\nu}$
is $\tilde{f}$-invariant. Moreover, we have that $\tilde{\mu}$ is
exact (hence ergodic) and \foreignlanguage{english}{\textup{$\tilde{\mu}$}}
is the unique equilibrium state for $\tilde{\varphi}$. 
\end{enumerate}
\end{lem}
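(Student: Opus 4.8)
The plan is to deduce the lemma from the classical thermodynamic formalism for \emph{open distance expanding maps}, applied to the skew product $\tilde{f}$ restricted to $J(\tilde{f})$, following \cite{MR2153926}. The two structural facts that make this reduction possible are: (i) by Lemma~\ref{lem:exp-implies-compactfibres} the set $J(\tilde{f})=\bigcup_{\omega\in I^{\N}}J^{\omega}$ is compact, and by Proposition~\ref{prop:expandingness-criteria}~(\ref{enu:exp-implies-hyperbolicloxodromic}) the semigroup $G$ is hyperbolic, so that $\CV(f_{i})\cap J(G)=\emptyset$ for every $i\in I$; hence $\tilde{f}$ is a finite-to-one open local homeomorphism on $J(\tilde{f})$ which, by the expanding hypothesis, is distance expanding for the metric $\tilde{d}$, and each sufficiently small ball in $J(\tilde{f})$ carries a full complement of well-defined holomorphic inverse branches along fibres; (ii) expansion together with the H\"older continuity of $\tilde{\varphi}$ yields the standard uniform bounded distortion estimates for the Birkhoff sums $S_{n}\tilde{\varphi}$ along injectivity branches of $\tilde{f}^{n}$. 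With these in hand the proof is the usual Ruelle--Perron--Frobenius package.

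For the conformal measure I would introduce the transfer operator $\mathcal{L}=\mathcal{L}_{\tilde{\varphi}}$ acting on $C(J(\tilde{f}))$ by $(\mathcal{L}g)(x)=\sum_{y\in\tilde{f}^{-1}(x)}\mathrm{e}^{\tilde{\varphi}(y)}g(y)$ (a finite sum, since $I$ is finite and each $f_{i}$ has finite degree) and consider its dual acting on probability measures. A Schauder--Tychonoff fixed point argument produces a probability measure $\tilde{\nu}$ with $\mathcal{L}^{*}\tilde{\nu}=\lambda\tilde{\nu}$ for some $\lambda>0$; the hypothesis $\mathcal{P}(\tilde{\varphi},\tilde{f})=0$ forces $\lambda=\mathrm{e}^{\mathcal{P}(\tilde{\varphi},\tilde{f})}=1$, and unwinding the fixed point equation over Borel sets on which $\tilde{f}$ is injective identifies $\tilde{\nu}$ as a $\tilde{\varphi}$-conformal measure. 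Uniqueness uses topological exactness of $\tilde{f}$ on $J(\tilde{f})$ (a consequence of the structure theory of Julia sets of rational semigroups, Lemma~\ref{lem:ratsemi-facts-basic}, as established in \cite{MR2153926}): bounded distortion shows any two $\tilde{\varphi}$-conformal measures are mutually absolutely continuous with densities bounded away from $0$ and $\infty$, and a covering/exactness argument then forces equality.

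For the density and the equilibrium state, the Ruelle--Perron--Frobenius theorem for distance expanding open maps with H\"older potentials supplies a unique strictly positive eigenfunction: by conformality and bounded distortion the iterates $\mathcal{L}^{n}1$ are uniformly bounded above and below and equicontinuous, so one extracts from the Ces\`aro averages $\tfrac{1}{n}\sum_{k=0}^{n-1}\mathcal{L}^{k}1$ a continuous (in fact H\"older) function $\tilde{h}>0$ with $\mathcal{L}\tilde{h}=\tilde{h}$. Normalising $\int\tilde{h}\,d\tilde{\nu}=1$ and setting $\tilde{\mu}:=\tilde{h}\,d\tilde{\nu}$, the identities $\mathcal{L}^{*}\tilde{\nu}=\tilde{\nu}$ and $\mathcal{L}\tilde{h}=\tilde{h}$ give $\tilde{f}$-invariance of $\tilde{\mu}$ directly. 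Exactness of $\tilde{\mu}$ (hence ergodicity) follows from the convergence $\mathcal{L}^{n}g\to\tilde{h}\int g\,d\tilde{\nu}$ uniformly for $g\in C(J(\tilde{f}))$, part of the same RPF theorem; and $\tilde{\mu}$, being a Gibbs state for the H\"older potential $\tilde{\varphi}$, is the unique equilibrium state by the variational principle together with the uniqueness of equilibrium states for such systems (see \cite{MR2656475}).

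The main obstacle is not the abstract machinery but verifying its hypotheses in this non-manifold, non-injective setting: that $\tilde{f}$ is open, distance expanding and topologically exact on $J(\tilde{f})$, and that the inverse-branch and bounded-distortion estimates hold uniformly over all fibres $J^{\omega}$. The non-injectivity of the generators means the transfer operator genuinely branches and the local inverse branches must be organised carefully along fibres; this is precisely the point where the situation differs from contracting iterated function systems, and it is controlled by the hyperbolicity of $G$ (critical values in $F(G)$) and the compactness $J(\tilde{f})=\bigcup_{\omega}J^{\omega}$. Once these are in place the argument is a line-by-line adaptation of \cite[Lemmas 3.6 and 3.10]{MR2153926}.
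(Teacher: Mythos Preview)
Your proposal is correct and takes essentially the same approach as the paper, which does not give a self-contained proof but simply refers to \cite[Lemma~3.6 and~3.10]{MR2153926} for the conformal measure and invariant density, and to \cite{MR2656475} for uniqueness of the equilibrium state. Your sketch fleshes out exactly this route via the Ruelle--Perron--Frobenius theorem for open distance expanding maps; the only minor discrepancy is that the paper derives topological exactness of $\tilde{f}$ on $J(\tilde{f})$ from \cite[Proposition~3.2~(f)]{MR1767945} together with Lemma~\ref{lem:moebiussemigroup-expanding-implies-goodexceptionalset}, rather than from Lemma~\ref{lem:ratsemi-facts-basic} directly.
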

We also consider subconformal measures on $J\left(G\right)$. 
\begin{defn}
Let $f=\left(f_{i}\right)_{i\in I}\in\left(\Rat\right)^{I}$ and let
$G=\left\langle f_{i}:i\in I\right\rangle $. Let $\varphi=\left(\varphi_{i}:f_{i}^{-1}\left(J\left(G\right)\right)\rightarrow\R\right)_{i\in I}$
be a family of measurable functions. A Borel probability measure $\nu$
on $J\left(G\right)$ is called \emph{$\varphi$-subconformal} (for
$f$) if, for each $i\in I$ and for each Borel set $B\subset f_{i}^{-1}\left(J\left(G\right)\right)$,
\[
\nu\left(f_{i}\left(B\right)\right)\le\int_{B}\e^{-\varphi_{i}}d\nu.
\]
 
\end{defn}
Next lemma shows that the support of a subconformal measure is equal
to the Julia set.
\begin{lem}
\label{lem:support-subconformal-contains-juliaset}Let $f=\left(f_{i}\right)_{i\in I}\in\left(\Rat\right)^{I}$
be expanding and let $G=\left\langle f_{i}:i\in I\right\rangle $.
Let $\nu$ denote a $\varphi$-subconformal measure of a measurable
family $\varphi=\left(\varphi_{i}:f_{i}^{-1}\left(J\left(G\right)\right)\rightarrow\R\right)_{i\in I}$.
Then we have $\supp\left(\nu\right)=J\left(G\right)$.\end{lem}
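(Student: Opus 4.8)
The plan is to establish the only nontrivial inclusion $J(G) \subseteq \supp(\nu)$ — the reverse one being immediate since $\nu$ is by hypothesis carried by $J(G)$ — and for this it suffices to show $\nu\bigl(U \cap J(G)\bigr) > 0$ for every open set $U \subseteq \Chat$ meeting $J(G)$. First I would dispose of the degenerate case: by Lemma \ref{lem:moebiussemigroup-expanding-implies-J-not-twoelements} either $\card(J(G)) = 1$, in which case $\nu$ is a Dirac mass and the claim is trivial, or $\card(J(G)) \ge 3$, which we henceforth assume. Then $J(G)$ is perfect (Lemma \ref{lem:ratsemi-facts-basic}(\ref{enu:perfectset})) and, by expandingness, $E(G) \subseteq F(G)$ (Lemma \ref{lem:moebiussemigroup-expanding-implies-goodexceptionalset}), so $J(G) \cap E(G) = \emptyset$. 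Fixing a point $z_0 \in \supp(\nu) \subseteq J(G)$, we have $z_0 \notin E(G)$, hence $\overline{G^-(z_0)} = J(G)$ by Lemma \ref{lem:ratsemi-facts-basic}(\ref{enu:preimages-dense}).

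Now fix an open $U$ with $U \cap J(G) \neq \emptyset$. By density of the backward orbit there exist $g \in G$ and $w \in U \cap J(G)$ with $g(w) = z_0$. Write $g = f_{\omega_n} \circ \cdots \circ f_{\omega_1}$ with $(\omega_1,\dots,\omega_n) \in I^n$ and set $w_0 := w$, $w_j := f_{\omega_j}(w_{j-1})$, so $w_n = z_0$; since $\bigcup_{i\in I} f_i^{-1}(J(G)) = J(G)$, a downward induction gives $w_j \in J(G)$ for every $j$. The core step is to prove, by downward induction on $j$ from $n$ to $0$, that $\nu\bigl(B(w_j,r) \cap J(G)\bigr) > 0$ for all $r > 0$. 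The case $j = n$ holds since $w_n = z_0 \in \supp(\nu)$. For the step, given $r' > 0$, put $B := B(w_{j-1}, r') \cap f_{\omega_j}^{-1}(J(G))$, which is a Borel subset of $f_{\omega_j}^{-1}(J(G)) \subseteq J(G)$ and contains $w_{j-1}$. As $f_{\omega_j}$ is a non-constant rational map it is open, so $f_{\omega_j}\bigl(B(w_{j-1},r')\bigr)$ is an open neighbourhood of $w_j$; combined with the identity $h\bigl(A \cap h^{-1}(C)\bigr) = h(A) \cap C$ this yields $f_{\omega_j}(B) = f_{\omega_j}\bigl(B(w_{j-1},r')\bigr) \cap J(G) \supseteq B(w_j,\rho) \cap J(G)$ for some $\rho > 0$, which has positive $\nu$-measure by the inductive hypothesis. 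Since $\varphi$-subconformality gives $\nu\bigl(f_{\omega_j}(B)\bigr) \le \int_B \e^{-\varphi_{\omega_j}}\,d\nu$ and the right-hand side vanishes whenever $\nu(B)=0$ (the integrand being strictly positive), we conclude $\nu(B) > 0$, whence $\nu\bigl(B(w_{j-1},r') \cap J(G)\bigr) > 0$. Taking $j = 0$ and $r'$ small enough that $B(w,r') \subseteq U$ gives $\nu(U \cap J(G)) > 0$, as desired.

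The one routine technicality is the $\nu$-measurability of the image $f_{\omega_j}(B)$ of a Borel set under the finite-to-one map $f_{\omega_j}$; this is handled by partitioning $B$ into Borel pieces on which $f_{\omega_j}$ is injective and invoking the Lusin--Souslin theorem, and it is in any case presupposed in the definition of $\varphi$-subconformal measure. Beyond this there is no genuine obstacle: the argument rests on combining the density of the backward orbit of a support point $z_0$ (the only place expandingness enters, via $E(G) \cap J(G) = \emptyset$) with the openness of the maps $f_{\omega_j}$, which lets one pull positivity of $\nu$-measure back along the composition $g = f_{\omega_n} \circ \cdots \circ f_{\omega_1}$ by repeatedly using subconformality in the contrapositive form ``$\nu(B) = 0 \Rightarrow \nu(f_{\omega_j}(B)) = 0$''. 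The downward induction on the length of the defining word of $g$ is the heart of the proof.
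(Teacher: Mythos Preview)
Your argument is correct and is essentially the proof the paper has in mind: the paper's own proof consists only of the same case distinction (via Lemmas \ref{lem:moebiussemigroup-expanding-implies-J-not-twoelements} and \ref{lem:moebiussemigroup-expanding-implies-goodexceptionalset}) followed by a reference to \cite[Proposition 4.3]{MR1625124}, and that proposition is proved by exactly the mechanism you describe --- pick $z_{0}\in\supp(\nu)$, use $E(G)\subset F(G)$ to get $\overline{G^{-}(z_{0})}=J(G)$, and then pull back positive $\nu$-measure along a word for $g$ using subconformality in the form $\nu(B)=0\Rightarrow\nu(f_{i}(B))=0$. One cosmetic remark: your parenthetical ``the integrand being strictly positive'' is not the reason $\int_{B}\e^{-\varphi_{\omega_{j}}}\,d\nu$ vanishes when $\nu(B)=0$; that holds for any nonnegative integrand, and positivity of $\e^{-\varphi_{\omega_{j}}}$ is irrelevant here.
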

\begin{proof}
We consider two cases. If $\card\left(J\left(G\right)\right)\ge3$
then $E\left(G\right)\subset F\left(G\right)$ by Lemma \ref{lem:moebiussemigroup-expanding-implies-goodexceptionalset}.
Then $\supp\left(\nu\right)=J\left(G\right)$ can be proved similarly
as in \cite[Proposition 4.3]{MR1625124}. Finally, if $1\le\card\left(J\left(G\right)\right)\le2$,
then we have $\card\left(J\left(G\right)\right)=1$ by Lemma \ref{lem:moebiussemigroup-expanding-implies-J-not-twoelements},
which immediately gives that $\supp\left(\nu\right)=J\left(G\right)$.
The proof is complete. 
\end{proof}
For a Borel measure $\tilde{m}$ on $J\left(\tilde{f}\right)$ we
denote by $\left(\pi_{\Chat}\right)_{*}\left(\tilde{m}\right)$ the
pushforward measure, which is for each Borel set $B\subset J\left(G\right)$
given by $\left(\pi_{\Chat}\right)_{*}\left(\tilde{m}\right)\left(B\right):=\tilde{m}\bigl(\pi_{\Chat}^{-1}\left(B\right)\bigr)$.
Next lemma is a straightforward generalisation of \cite[Lemma 3.11]{MR2153926}.
\begin{lem}
\label{lem:existence-sub-conformalmeasure-on-juliaset}Let $f=\left(f_{i}\right)_{i\in I}\in\left(\Rat\right)^{I}$
be expanding and let $G=\left\langle f_{i}:i\in I\right\rangle $.
Let $\varphi=\left(\varphi_{i}\right)_{i\in I}$ be a H\"older family.
Suppose that $\mathcal{P}\left(\tilde{\varphi},\tilde{f}\right)=0$
and let $\tilde{\nu}$ denote the unique $\tilde{\varphi}$-conformal
measure. Then the probability measure $\nu:=\left(\pi_{\Chat}\right)_{*}\left(\tilde{\nu}\right)$
is a $\varphi$-subconformal measure with $\supp\left(\nu\right)=J(G)$. 
\end{lem}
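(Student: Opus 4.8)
The plan is to transport the $\tilde\varphi$-conformality of $\tilde\nu$ on $J(\tilde f)$ down to the base $J(G)$ via the projection $\pi_\Chat$, following the proof of \cite[Lemma 3.11]{MR2153926}; the only genuinely delicate issue is that the generators $f_i$, and hence $\tilde f$, are finite-to-one rather than injective.

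First I would collect the preliminaries. Since $f$ is expanding we have $J(\tilde f)\subset I^\N\times J(G)$: this is immediate from Proposition \ref{prop:basic-facts} (\ref{enu:basic-facts-3}) when $\card(J(G))\ge3$, while the case $\card(J(G))\le2$ is trivial by Lemma \ref{lem:moebiussemigroup-expanding-implies-J-not-twoelements}. Consequently $\tilde\nu$ is carried by $I^\N\times J(G)$, so $\nu=(\pi_\Chat)_*(\tilde\nu)$ is a well-defined Borel probability measure on $J(G)$. Writing $[i]:=\{\omega\in I^\N:\omega_1=i\}$, note that $\tilde\varphi=\varphi_i\circ\pi_\Chat$ on $[i]\times\Chat$, and recall the change-of-variables identity $\int g\circ\pi_\Chat\,d\tilde\nu=\int g\,d\nu$ valid for every non-negative Borel function $g$ on $J(G)$. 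With these in place, $\varphi$-subconformality of $\nu$ reduces to the estimate $\nu(f_i(B))\le\int_B\e^{-\varphi_i}\,d\nu$ for every $i\in I$ and every Borel set $B\subset f_i^{-1}(J(G))$.

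For this estimate I would set $\tilde B:=\bigl([i]\times B\bigr)\cap J(\tilde f)$, on which $\tilde f$ acts by $\tilde f(\omega,z)=(\sigma\omega,f_i(z))$. The key inclusion is $\pi_\Chat^{-1}\bigl(f_i(B)\bigr)\cap J(\tilde f)\subset\tilde f(\tilde B)$: given $(\kappa,w)\in J(\tilde f)$ with $w\in f_i(B)$, pick $z\in B$ with $f_i(z)=w$; since $w\in J_\kappa$ and $f_i^{-1}(J_\kappa)=J_{i\kappa}$ by Proposition \ref{prop:basic-facts} (\ref{enu:basic-facts-1}), we get $z\in J_{i\kappa}$, so $(i\kappa,z)\in\tilde B$ and $\tilde f(i\kappa,z)=(\kappa,w)$. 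Next, as $f_i$ is rational of finite degree, I would write $f_i^{-1}(J(G))$ as a countable disjoint union of Borel sets $U_1,U_2,\dots$ on each of which $f_i$ is injective, and put $\tilde B_k:=\tilde B\cap([i]\times U_k)$; then $\tilde f$ is injective on each $\tilde B_k$, since its first coordinate is governed by $\sigma|_{[i]}$, a bijection onto $I^\N$. Applying $\tilde\varphi$-conformality of $\tilde\nu$ on each $\tilde B_k$ together with countable subadditivity of $\tilde\nu$ gives
\[
\nu\bigl(f_i(B)\bigr)=\tilde\nu\bigl(\pi_\Chat^{-1}(f_i(B))\bigr)\le\tilde\nu\bigl(\tilde f(\tilde B)\bigr)\le\sum_{k}\tilde\nu\bigl(\tilde f(\tilde B_k)\bigr)=\sum_{k}\int_{\tilde B_k}\e^{-\tilde\varphi}\,d\tilde\nu,
\]
and since $\tilde\varphi=\varphi_i\circ\pi_\Chat$ on $[i]\times\Chat$ and $\tilde B\subset\pi_\Chat^{-1}(B)$, the right-hand side equals $\int_{\tilde B}\e^{-\varphi_i\circ\pi_\Chat}\,d\tilde\nu\le\int_{B}\e^{-\varphi_i}\,d\nu$. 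Thus $\nu$ is $\varphi$-subconformal, and since the family $\varphi$ is in particular measurable, Lemma \ref{lem:support-subconformal-contains-juliaset} yields $\supp(\nu)=J(G)$.

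I expect the main obstacle to be exactly this non-injectivity of the $f_i$: in contrast with the contracting iterated function system setting, where each generator is injective and one obtains an exact conformality identity on the base, here one must split along the injectivity sheets $U_k$ and only an inequality survives. A minor measure-theoretic technicality hidden in the chain above is the measurability of the images $\tilde f(\tilde B_k)$ (needed even to write $\tilde\nu(\tilde f(\tilde B_k))$): $\tilde f$ being continuous and injective on the Borel set $\tilde B_k$, each such image is analytic, hence universally measurable, and $\tilde\nu$ is to be read on the appropriate completion — exactly as in \cite[Lemma 3.11]{MR2153926}, which I would cite rather than reproduce. The remaining steps are routine manipulations with the pushforward.
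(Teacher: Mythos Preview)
Your proposal is correct and follows exactly the approach the paper intends: the paper gives no proof beyond the sentence ``Next lemma is a straightforward generalisation of \cite[Lemma 3.11]{MR2153926}'', and you have written out precisely that generalisation, handling the non-injectivity of the $f_i$ by decomposing into injectivity sheets and invoking Lemma~\ref{lem:support-subconformal-contains-juliaset} for the support statement. One small remark: the case distinction on $\card(J(G))$ for the inclusion $J(\tilde f)\subset I^\N\times J(G)$ is unnecessary, since $J_\omega\subset J(G)$ always holds (normality of $G$ implies normality of any subfamily), so $J(\tilde f)=\overline{\bigcup_\omega J^\omega}\subset I^\N\times J(G)$ directly; and the images $\tilde f(\tilde B_k)$ are in fact Borel (Luzin--Souslin), not merely analytic, so no completion is needed.
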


\subsection{The free energy function}

Let us now introduce the free energy function and an important family
of associated measures (see \cite{MR511655} and \cite{bowenequilibriumMR0442989,MR1435198,pesindimensiontheoryMR1489237}).
\begin{defn}
\label{def-betaconformal-eigenmeasures}Let $f=\left(f_{i}\right)_{i\in I}\in\left(\Rat\right)^{I}$
be expanding and let $G=\left\langle f_{i}:i\in I\right\rangle $.
Let $\psi=\left(\psi_{i}\right)_{i\in I}$ be a H\"older family associated
with $f$. The \emph{free energy function }\foreignlanguage{english}{\emph{for
 $\left(f,\psi\right)$} is the unique function} $t:\R\rightarrow\R$
such that $\mathcal{P}\bigl(\beta\tilde{\psi}+t\left(\beta\right)\tilde{\zeta},\tilde{f}\bigr)=0$,
for each $\beta\in\R$. For each $\beta\in\R$, we denote by $\tilde{\nu}_{\beta}$
the unique $\beta\tilde{\psi}+t\left(\beta\right)\tilde{\zeta}$-conformal
measure for $\tilde{f}$, and we denote by $\tilde{\mu}_{\beta}$
the unique equilibrium state for $\beta\tilde{\psi}+t\left(\beta\right)\tilde{\zeta}$.
Moreover, we denote by $\nu_{\beta}$ the pushforward measure $\left(\pi_{\Chat}\right)_{*}\left(\tilde{\nu}_{\beta}\right)$
supported on $J\left(G\right)$. We also set 
\[
\alpha_{0}\left(\psi\right):=\frac{\int\tilde{\psi}d\tilde{\mu}_{0}}{\int\tilde{\zeta}d\tilde{\mu}_{0}}.
\]
\end{defn}
\begin{rem}
Using that $f$ is expanding, one immediately verifies that, for each
$\beta\in\R$, there exists a unique $t\left(\beta\right)$ such that
$\mathcal{P}\left(\beta\tilde{\psi}+t\left(\beta\right)\tilde{\zeta},\tilde{f}\right)=0$.
In particular, we have that $t\left(0\right)$ is the unique real
number $\delta$ such that $\mathcal{P}\left(\delta\tilde{\zeta},\tilde{f}\right)=0$,
which is also called the critical exponent of $f$ (\cite{MR2153926}).
Since we always assume that $f$ is non-exceptional, we have that
$t\left(0\right)=\delta>0$. 
\end{rem}
The following two propositions go back to work of  Ruelle (\cite{MR511655})
for shift spaces. Since the skew product map $\tilde{f}$ is an open
distance expanding map (\cite{MR2153926}), we see that $\left(J\left(\tilde{f}\right),\tilde{f}\right)$
is semi-conjugate to a shift space by choosing a Markov partition.
Moreover, by \cite[Proposition 3.2 (f)]{MR1767945} and 
Lemma~\ref{lem:moebiussemigroup-expanding-implies-goodexceptionalset}, 
$\tilde{f}:J(\tilde{f})\rightarrow J(\tilde{f})$ is topologically exact. 
Then it is standard to derive the following two propositions. (See
also \cite{MR1435198,pesindimensiontheoryMR1489237}, where these
results of Ruelle are applied to a similar kind of multifractal analysis
for conformal repellers.)
\begin{prop}
\label{prop:free-energy-function}Let $f=\left(f_{i}\right)_{i\in I}\in\left(\Rat\right)^{I}$
be expanding. Let $\psi=\left(\psi_{i}\right)_{i\in I}$ be a H\"older
family associated with $f$. The free energy function $t:\R\rightarrow\R$
for $\left(f,\psi\right)$ is convex and real analytic and its first
derivative is given by 
\[
t'\left(\beta\right)=-\frac{\int\tilde{\psi}d\tilde{\mu}_{\beta}}{\int\tilde{\zeta}d\tilde{\mu}_{\beta}},\quad\mbox{for each }\beta\in\R.
\]

\end{prop}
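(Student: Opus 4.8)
The plan is to derive both assertions from the standard real-analyticity, convexity and differentiability properties of the topological pressure on Hölder potentials for the skew product $\tilde f$; as explained above, $\tilde f$ is an open distance expanding and topologically exact map, so the Ruelle--Perron--Frobenius transfer operator theory (obtained, say, via a Markov partition semi-conjugating $(J(\tilde f),\tilde f)$ to a subshift of finite type) is available. Concretely I would use: (i) for the two Hölder families $\psi=(\psi_i)_{i\in I}$ and $\zeta=(\zeta_i)_{i\in I}$ associated with $f$, the function $F(\beta,s):=\mathcal{P}(\beta\tilde\psi+s\tilde\zeta,\tilde f)$ is real analytic and convex on $\R^2$; and (ii) writing $\tilde\mu_{\beta,s}$ for the (unique, ergodic) equilibrium state of $\beta\tilde\psi+s\tilde\zeta$, the partial derivatives of $F$ are $\partial_\beta F(\beta,s)=\int\tilde\psi\,d\tilde\mu_{\beta,s}$ and $\partial_s F(\beta,s)=\int\tilde\zeta\,d\tilde\mu_{\beta,s}$. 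In the notation of Definition \ref{def-betaconformal-eigenmeasures} we have $\tilde\mu_\beta=\tilde\mu_{\beta,t(\beta)}$.

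The one place where the hypotheses are used in an essential (non-formal) way is the sign of $\int\tilde\zeta\,d\tilde\mu_\beta$. Since $f$ is expanding there are $C>0$ and $\lambda>1$ with $\inf_{J(\tilde f)}\Vert(\tilde f^n)'\Vert\ge C\lambda^n$, so $S_n\tilde\zeta=-\log\Vert(\tilde f^n)'\Vert\le-\log(C\lambda^n)$ on $J(\tilde f)$; applying Birkhoff's ergodic theorem to the ergodic measure $\tilde\mu_\beta$ yields
\[
\int\tilde\zeta\,d\tilde\mu_\beta=\lim_{n\to\infty}\frac1n S_n\tilde\zeta\le-\log\lambda<0.
\]
In particular $\partial_s F(\beta,t(\beta))=\int\tilde\zeta\,d\tilde\mu_\beta\neq 0$, so the implicit function theorem applied to the real analytic function $F$ at each point of the level set $\{F=0\}$ shows that the solution $t$ of $F(\beta,t(\beta))=0$ --- which exists and is unique by the Remark following Definition \ref{def-betaconformal-eigenmeasures} --- is real analytic. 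Differentiating $F(\beta,t(\beta))=0$ in $\beta$ and substituting the expressions for the partials gives $\int\tilde\psi\,d\tilde\mu_\beta+t'(\beta)\int\tilde\zeta\,d\tilde\mu_\beta=0$, and dividing by $\int\tilde\zeta\,d\tilde\mu_\beta<0$ yields the asserted formula for $t'(\beta)$.

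For convexity I would avoid computing $t''$ and argue directly from convexity of $\mathcal{P}(\cdot,\tilde f)$. Fix $\beta_1,\beta_2\in\R$ and $\theta\in[0,1]$, set $\beta:=\theta\beta_1+(1-\theta)\beta_2$ and $s:=\theta t(\beta_1)+(1-\theta)t(\beta_2)$. Then $\beta\tilde\psi+s\tilde\zeta=\theta(\beta_1\tilde\psi+t(\beta_1)\tilde\zeta)+(1-\theta)(\beta_2\tilde\psi+t(\beta_2)\tilde\zeta)$, so convexity of the pressure gives $F(\beta,s)\le\theta F(\beta_1,t(\beta_1))+(1-\theta)F(\beta_2,t(\beta_2))=0=F(\beta,t(\beta))$. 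Since $s\mapsto F(\beta,s)$ has derivative $\int\tilde\zeta\,d\tilde\mu_{\beta,s}<0$ everywhere it is strictly decreasing, so $F(\beta,s)\le F(\beta,t(\beta))$ forces $s\ge t(\beta)$, i.e. $t(\beta)\le\theta t(\beta_1)+(1-\theta)t(\beta_2)$.

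As for difficulty, there is no real obstacle: the statement is the standard output of Ruelle's thermodynamic formalism once transported to the skew product, and the only genuinely used inputs are that $\tilde f$ is open distance expanding and topologically exact (which underpins analyticity and differentiability of $F$) together with the strict inequality $\int\tilde\zeta\,d\tilde\mu_\beta\le-\log\lambda<0$ coming from expansion, which is what makes the implicit function theorem applicable and the quotient defining $t'(\beta)$ meaningful.
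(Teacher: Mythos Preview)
Your proposal is correct and follows precisely the approach the paper indicates: the paper does not give a detailed proof but simply remarks that, since $\tilde f$ is an open distance expanding and topologically exact map (hence semi-conjugate to a subshift of finite type via a Markov partition), the proposition is a standard consequence of Ruelle's thermodynamic formalism. Your write-up fills in exactly these standard details --- real analyticity of the two-variable pressure, the derivative formula for pressure in terms of the equilibrium state, the implicit function theorem with the key input $\int\tilde\zeta\,d\tilde\mu_\beta\le-\log\lambda<0$ from expansion, and the direct convexity argument from convexity of pressure --- so there is nothing to add.
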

The following proposition gives a criterion for strict convexity of
the free energy function. For the readers, we give a proof. 
\begin{prop}
\label{prop:characterisation-strict-convexity}Let $f=\left(f_{i}\right)_{i\in I}\in\left(\Rat\right)^{I}$
be expanding. Let $\psi=\left(\psi_{i}\right)_{i\in I}$ be a H\"older
family associated with $f$ and let $t:\R\rightarrow\R$ denote the
free energy function for $\left(f,\psi\right)$. Suppose that there
exists $\gamma\in\R$ such that $\mathcal{P}\left(\gamma\tilde{\psi},\tilde{f}\right)=0$. Then, the following (1)-(4) are equivalent:
\begin{enumerate}
\item There exists  $\beta _{0}\in \R $ such that $t''(\beta _{0})=0$.  
\item $t'$ is constant on $\R .$
In this case, we have $t(\beta )=\delta -\beta \delta /\gamma$. 
\label{enu:linear}
\item $\tilde{\mu }_{0}=\tilde{\mu }_{\gamma }$. 
\item There exists a continuous function $v:J(\tilde{f})\rightarrow \R $
such that $\delta \tilde{\zeta} =\gamma \tilde{\psi}+v-v\circ \tilde{f}.$
\end{enumerate}
In particular, $t''>0$ on $\R $ if and only if
$\tilde{\mu }_{0}\neq \tilde{\mu }_{\gamma }.$

Moreover, if there exists $\beta \in \R $ such that
$t''(\beta )\neq 0$, then $t''>0$ on $\R $ and $t$ is strictly convex
on $\R$.
\end{prop}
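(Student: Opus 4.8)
The plan is to establish the cycle of implications (4)$\Rightarrow$(3)$\Rightarrow$(2)$\Rightarrow$(1)$\Rightarrow$(4), with the "in particular" and "moreover" clauses falling out as corollaries. First I would differentiate the defining relation $\mathcal{P}(\beta\tilde{\psi}+t(\beta)\tilde{\zeta},\tilde{f})=0$ twice in $\beta$, using the real analyticity of $t$ from Proposition \ref{prop:free-energy-function} and the standard formula for the second derivative of pressure along a line in potential space. Concretely, writing $\Phi_\beta:=\beta\tilde{\psi}+t(\beta)\tilde{\zeta}$ and differentiating $\mathcal{P}(\Phi_\beta,\tilde{f})=0$ once gives $\int\tilde{\psi}\,d\tilde{\mu}_\beta+t'(\beta)\int\tilde{\zeta}\,d\tilde{\mu}_\beta=0$ (this is Proposition \ref{prop:free-energy-function}). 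Differentiating again and using that $\frac{d}{ds}\mathcal{P}(g+s\,h,\tilde{f})\big|_{s=0}=\int h\,d\tilde{\mu}_g$ and $\frac{d^2}{ds^2}\mathcal{P}(g+s\,h,\tilde{f})\big|_{s=0}=\sigma_g^2(h)$, the asymptotic variance of $h$ (minus its average) with respect to $\tilde{\mu}_g$, one obtains an identity of the shape $\sigma_{\tilde{\mu}_\beta}^2\big(\tilde{\psi}+t'(\beta)\tilde{\zeta}\big)+t''(\beta)\int\tilde{\zeta}\,d\tilde{\mu}_\beta=0$. Since $\tilde{\zeta}=-\log\|f'_{\omega_1}\|$ and $\tilde{f}$ is expanding, $\int\tilde{\zeta}\,d\tilde{\mu}_\beta<0$; hence $t''(\beta)\ge 0$ always, and $t''(\beta_0)=0$ if and only if $\tilde{\psi}+t'(\beta_0)\tilde{\zeta}$ is $\tilde{\mu}_{\beta_0}$-cohomologous to a constant (the asymptotic variance vanishes exactly on coboundaries-plus-constants, a standard fact for Hölder potentials over expanding maps — see the references \cite{MR511655,MR2656475}).

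For (1)$\Rightarrow$(4): suppose $t''(\beta_0)=0$. By the variance characterization just described, there is a continuous (indeed Hölder) $w:J(\tilde{f})\to\R$ and a constant $c$ with $\tilde{\psi}+t'(\beta_0)\tilde{\zeta}=c+w-w\circ\tilde{f}$. The constant $c$ must equal $\int(\tilde{\psi}+t'(\beta_0)\tilde{\zeta})\,d\tilde{\mu}_{\beta_0}=0$ by Proposition \ref{prop:free-energy-function}. Now I use the hypothesis that $\mathcal{P}(\gamma\tilde{\psi},\tilde{f})=0$: this forces $\gamma$ to be the value with $t(\gamma)=0$, so $\tilde{\mu}_\gamma$ is the equilibrium state for $\gamma\tilde{\psi}$, and comparing pressures along the line through $\Phi_0=\delta\tilde{\zeta}$ and $\Phi_\gamma=\gamma\tilde{\psi}$ (both of zero pressure, and on the graph of the affine-if-$t''\equiv0$ function $t$) pins down $t'(\beta_0)=-\delta/\gamma$. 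Substituting back, $\tilde{\psi}-(\delta/\gamma)\tilde{\zeta}=w-w\circ\tilde{f}$, i.e. $\gamma\tilde{\psi}-\delta\tilde{\zeta}=\gamma w-\gamma w\circ\tilde{f}$; setting $v:=-\gamma w$ yields $\delta\tilde{\zeta}=\gamma\tilde{\psi}+v-v\circ\tilde{f}$, which is (4). (Here I should first rule out $\gamma=0$: if $\gamma=0$ then $\mathcal{P}(0,\tilde{f})=h_{\mathrm{top}}(\tilde{f})=0$, contradicting expansiveness with nontrivial Julia set; a quick argument via Lemma \ref{lem:moebiussemigroup-expanding-implies-goodexceptionalset} and positivity of $\delta$ handles this.)

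For (4)$\Rightarrow$(3): from $\delta\tilde{\zeta}=\gamma\tilde{\psi}+v-v\circ\tilde{f}$, the potentials $\delta\tilde{\zeta}$ and $\gamma\tilde{\psi}$ are cohomologous, hence have the same pressure (both $0$) and the same equilibrium state; but the equilibrium state for $\delta\tilde{\zeta}$ is $\tilde{\mu}_0$ and the one for $\gamma\tilde{\psi}$ is $\tilde{\mu}_\gamma$, so $\tilde{\mu}_0=\tilde{\mu}_\gamma$. For (3)$\Rightarrow$(2): if $\tilde{\mu}_0=\tilde{\mu}_\gamma=:\tilde{\mu}$, then by the uniqueness of equilibrium states two Hölder potentials with the same equilibrium state differ by a coboundary plus a constant; applying this to $\delta\tilde{\zeta}$ and $\gamma\tilde{\psi}$ and matching pressures gives $\delta\tilde{\zeta}-\gamma\tilde{\psi}=u-u\circ\tilde{f}$ for continuous $u$, whence for every $\beta$ the potential $\beta\tilde{\psi}+t(\beta)\tilde{\zeta}$ is cohomologous to $(\gamma t(\beta)/\delta+\beta)\tilde{\psi}+\text{const}$; computing pressure and using $\mathcal{P}(\gamma\tilde{\psi})=0$ forces $t(\beta)=\delta-\beta\delta/\gamma$, an affine function, so $t'\equiv-\delta/\gamma$ is constant. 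This also gives the displayed formula $t(\beta)=\delta-\beta\delta/\gamma$ in (2). Finally (2)$\Rightarrow$(1) is immediate. The "in particular" and "moreover" statements follow: $t''\ge 0$ everywhere from the variance identity, $t''>0$ somewhere implies no $\beta_0$ with $t''(\beta_0)=0$, hence by the contrapositive of (1)$\Rightarrow\cdots\Rightarrow$(1) we get $t''>0$ on all of $\R$, and strict convexity is then standard.

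\textbf{Main obstacle.} The delicate point is the precise bookkeeping in the step that extracts $t'(\beta_0)=-\delta/\gamma$ (and the affine formula for $t$) purely from the two pressure-zero constraints $\mathcal{P}(\delta\tilde{\zeta})=\mathcal{P}(\gamma\tilde{\psi})=0$ together with a single vanishing of $t''$ — one must be careful that the line on which $\mathcal{P}=0$ really is the graph of $t$ and that $\gamma\ne 0$, $\int\tilde{\zeta}\,d\tilde{\mu}_\beta\ne 0$. The other technical input I am leaning on — that, for Hölder potentials over the open distance expanding map $\tilde{f}:J(\tilde{f})\to J(\tilde{f})$, the asymptotic variance $\sigma_\mu^2(h)$ vanishes iff $h$ is cohomologous (via a Hölder function) to a constant, and that equal equilibrium states force cohomologous-plus-constant — is classical and is exactly what the references \cite{bowenequilibriumMR0442989,MR511655,MR648108,MR2656475} cited just above the proposition provide, so I would invoke it rather than reprove it.
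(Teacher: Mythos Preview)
Your proof is correct and follows essentially the same strategy as the paper: both derive from $t''(\beta_0)=0$ that $\tilde{\psi}+t'(\beta_0)\tilde{\zeta}$ is a coboundary, deduce that $t$ is affine, and then use $t(0)=\delta$, $t(\gamma)=0$ to identify the slope as $-\delta/\gamma$. The only notable difference is in execution: the paper routes the cohomology argument through a Markov partition and symbolic dynamics, and for the step from $\tilde{\mu}_0=\tilde{\mu}_\gamma$ to the coboundary equation it constructs $v$ explicitly from the Perron--Frobenius eigenfunctions $\tilde{h}_0,\tilde{h}_\gamma$, whereas you work directly on $J(\tilde{f})$ and invoke the abstract principle that equal equilibrium states force cohomologous-plus-constant potentials --- both routes are standard and valid.
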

\begin{proof}
$(J(\tilde{f}), \tilde{f})$ is an open distance expanding map and it is topologically exact. 
Fix a Markov partition $\{ R_{1},\ldots ,R_{d}\} $ as in \cite[p118]{MR2656475}. 
Let $A$ be a $d\times d$ matrix with $a_{ij}=0$ or $1$ according to 
$\tilde{f}(\mbox{Int}(R_{i}))\cap \mbox{Int}(R_{j})$ is empty or not.  
By \cite[Theorem 4.5.7]{MR2656475}, there exists a surjective H\"older continuous map 
$\pi :\Sigma \rightarrow J(\tilde{f})$ (which is almost bijective), such that 
$\tilde{f}\circ \pi =\pi \circ \sigma $, where 
$\sigma :\Sigma \rightarrow \Sigma $ is the subshift of finite type constructed by $A.$ 
By \cite[Theorem 4.5.8]{MR2656475}, every H\"older continuous function 
$\tilde{\varphi }$ on $J(\tilde{f})$ defines a H\"older continuous function  
$\tilde{\varphi }\circ \pi $ on $\Sigma $, and we have $P(\tilde{\varphi }, \tilde{f})=
P(\tilde{\varphi }\circ \pi ,\sigma ).$ Hence, the free energy function $t:\R \rightarrow \R $ 
is given by $P(\beta \tilde{\psi }\circ \pi +t(\beta )\tilde{\zeta }\circ \pi , \sigma )=0.$ 

We first show that (1) implies (2) and (3). Suppose that there exists $\beta _{0}\in \R $ 
such that $t''(\beta _{0})=0.$ 
Then it is well-known  (see e.g. \cite[p129]{MR2003772} where the 
geometric potential is given by $-\zeta $ in our notation) that 
there exists a  H\"older continuous function $u:\Sigma \rightarrow \R $ 
such that $t'(\beta _{0})\tilde{\zeta }\circ \pi +\tilde{\psi }\circ \pi =u-u\circ \sigma .$ 
It follows that $P((-\beta t'(\beta _{0})+t(\beta ))\tilde{\zeta }\circ \pi , \sigma )=0$. 
Hence, $-\beta t'(\beta _{0})+t(\beta )=t(0)=\delta .$ Thus, 
$t(\beta )=\delta +\beta t'(\beta _{0}).$ Therefore $t'(\beta )=t'(\beta _{0})$ for all $\beta \in \R .$ 
To determine $t'(\beta _{0})$, note that by the assumption there exists a unique 
$\gamma \in \R $ such that $P(\gamma \tilde{\psi }\circ \pi ,\sigma )=0.$ Hence, 
$t(\gamma )=0.$ It follows that $t'(\beta _{0})=-\delta /\gamma .$ 
We have thus shown that $t(\beta )=\delta -\beta \delta /\gamma $ and that 
$\delta \tilde{\zeta }\circ \pi $ is cohomologous to $\gamma \tilde{\psi }\circ \pi .$ 
Hence, $\rho _{\delta \tilde{\zeta }\circ \pi }=\rho _{\gamma \tilde{\psi }\circ \pi }$, 
where $\rho _{\delta \tilde{\zeta }\circ \pi }$ and $\rho _{\gamma \tilde{\psi }\circ \pi }$ 
are the unique equilibrium states of $\delta \tilde{\zeta }\circ \pi $ and 
$\tilde{\psi }\circ \pi $, respectively. 
Since $\pi $ defines an isomorphism of the probability spaces 
$(\Sigma, \rho _{\delta \tilde{\zeta }\circ \pi })$ and 
$(J(\tilde{f}), \rho _{\delta \tilde{\zeta }\circ \pi }\circ \pi ^{-1})$ by 
\cite[Theorem 4.5.9]{MR2656475}, 
we have that $\rho _{\delta \tilde{\zeta }\circ \pi }\circ \pi ^{-1}$ is the equilibrium 
state for $\delta \tilde{\zeta }.$ Similarly, 
$\rho _{\gamma \tilde{\psi }\circ \pi }\circ \pi ^{-1}$ is the equilibrium state 
for  $\gamma \tilde{\psi }.$ 
Also, 
$\rho _{\delta \tilde{\zeta }\circ \pi }\circ \pi ^{-1}=
\rho _{\gamma \tilde{\psi }\circ \pi }\circ \pi ^{-1}. $  
 Since the equilibrium states of   H\"older  continuous potentials on $J(\tilde{f})$ are 
 unique (\cite[Theorem 5.6.2]{MR2656475}) 
 and the probability measures $\tilde{\mu }_{0}$ and $\tilde{\mu }_{\gamma }$ are 
 equilibrium states for $\delta \tilde{\zeta }$ and $\gamma \tilde{\zeta }$ 
 respectively, we conclude that $\rho _{\delta \tilde{\zeta }\circ \pi }\circ \pi ^{-1}
 =\tilde{\mu }_{0}$ and $\rho _{\gamma \tilde{\psi }\circ \pi }\circ \pi ^{-1}=\tilde{\mu}_{\gamma }.$ 
Thus it follows that $\tilde{\mu }_{0}=\tilde{\mu }_{\gamma }.$  Hence, we have shown that (1) implies (2) and (3). 
 
We now suppose (3).  To prove (4) we
proceed similarly as in the proof of \cite[Theorem 3.1]{su09}. We
consider the Perron-Frobenius operators 
\[
\mathcal{L}_{0}:C\left(J\left(\tilde{f}\right)\right)\rightarrow C\left(J\left(\tilde{f}\right)\right)\quad\mbox{and}\quad\mathcal{L}_{\gamma}:C\left(J\left(\tilde{f}\right)\right)\rightarrow C\left(J\left(\tilde{f}\right)\right),
\]
which are for $\tilde{h}\in C\left(J\left(\tilde{f}\right)\right)$
and $\left(\omega,z\right)\in J\left(\tilde{f}\right)$ given by 
\[
\mathcal{L}_{0}\left(\tilde{h}\right)\left(\omega,z\right):=\sum_{\left(i\omega,y\right)\in\tilde{f}^{-1}\left(\omega,z\right)}\Vert\tilde{f}'_{i}\left(y\right)\Vert^{-\delta}\tilde{h}\left(i\omega,y\right)\quad\mbox{and}\quad\mathcal{L}_{\gamma}\left(\tilde{h}\right)\left(\omega,z\right):=\sum_{\left(i\omega,y\right)\in\tilde{f}^{-1}\left(\omega,z\right)}\e^{\gamma\psi_{i}\left(y\right)}\tilde{h}\left(i\omega,y\right).
\]
We denote by $\mathcal{L}_{0}^{*}$ and $\mathcal{L}_{\gamma}^{*}$
the dual operators acting on the space $\left(C\left(J\left(\tilde{f}\right)\right)\right)^{*}$
of bounded linear functionals on $C\left(J\left(\tilde{f}\right)\right)$.
Then it follows from \cite{MR2153926} that there exist unique continuous 
positive 
functions $\tilde{h}_{0},\tilde{h}_{\gamma}:J\left(\tilde{f}\right)\rightarrow\R^{+}$
such that $\mathcal{L}_{0}\left(\tilde{h}_{0}\right)=\tilde{h}_{0}$
and $\mathcal{L}_{\gamma}\left(\tilde{h}_{\gamma}\right)=\tilde{h}_{\gamma}$,
and unique Borel probability measures $\tilde{\nu}_{0},\tilde{\nu}_{\gamma}$
on $J\left(\tilde{f}\right)$ such that $\mathcal{L}_{0}^{*}\left(\tilde{\nu}_{0}\right)=\tilde{\nu}_{0}$
and $\mathcal{L}_{\gamma}^{*}\left(\tilde{\nu}_{\gamma}\right)=\tilde{\nu}_{\gamma}$.
Then $\tilde{\nu}_{0}$ is $\delta\tilde{\zeta}$-conformal and $\tilde{\nu}_{\gamma}$
is $\gamma\tilde{\psi}$-conformal (see Definition \ref{conformal-measure}
and \cite{MR1014246}). Moreover, we have that the unique equilibrium
states $\tilde{\mu}_{0},\tilde{\mu}_{\gamma}$ are given by $\tilde{\mu}_{0}=\tilde{h}_{0}d\tilde{\nu}_{0}$
and $\tilde{\mu}_{\gamma}=\tilde{h}_{\gamma}d\tilde{\nu}_{\gamma}$
(see also Lemma \ref{lem:existence-skew-product-conformalmeasure}).
Since $\tilde{\mu}_{0}=\tilde{\mu}_{\gamma}$, we have that $\tilde{h}_{0}d\tilde{\nu}_{0}=\tilde{h}_{\gamma}d\tilde{\nu}_{\gamma}$.
Using this equality and conformality of $\tilde{\nu}_{0}$ and $\tilde{\nu}_{\gamma}$,
we obtain that, for each Borel set $A\subset J\left(\tilde{f}\right)$
such that $\tilde{f}_{|A}$ is injective, 
\[
\tilde{\mu}_{0}\left(\tilde{f}\left(A\right)\right)=\int_{\tilde{f}\left(A\right)}\tilde{h}_{\gamma}d\tilde{\nu}_{\gamma}=\int_{A}\e^{-\gamma\tilde{\psi}}\left(\tilde{h}_{\gamma}\circ\tilde{f}\right)d\tilde{\nu}_{\gamma}=\int_{A}\e^{-\gamma\tilde{\psi}}\left(\tilde{h}_{\gamma}\circ\tilde{f}\right)\frac{\tilde{h}_{0}}{\tilde{h}_{\gamma}}d\tilde{\nu}_{0}
\]
and  
\[
\tilde{\mu}_{0}\left(\tilde{f}\left(A\right)\right)=\left(\tilde{h}_{0}d\tilde{\nu}_{0}\right)\left(\tilde{f}\left(A\right)\right)=\int_{A}\Vert\tilde{f}'\Vert^{\delta}\left(\tilde{h}_{0}\circ\tilde{f}\right)d\tilde{\nu}_{0}.
\]
We deduce that 
\[
\left(\tilde{h}_{0}\circ\tilde{f}\right)\cdot\Vert\tilde{f}'\Vert^{\delta}=\e^{-\gamma\tilde{\psi}}\frac{\tilde{h}_{\gamma}\circ\tilde{f}}{\tilde{h}_{\gamma}}\tilde{h}_{0}.
\]
By taking logarithm, we have thus shown that 
\begin{equation}
-\delta\tilde{\zeta}=-\gamma\tilde{\psi}+\log\tilde{h}_{\gamma}\circ\tilde{f}-\log\tilde{h}_{\gamma}+\log\tilde{h}_{0}-\log\tilde{h}_{0}\circ\tilde{f},\label{eq:cohomologous-potentials}
\end{equation}
 which proves (4). 

We now suppose (4).  Then  $P((\beta +t(\beta )\gamma /\delta )\tilde{\psi }, \tilde{f})=0.$ 
 It follows that $t(\beta )=\delta -\beta \delta /\gamma .$ Thus,  
 $t'$ is constant on $\R$, which proves (1) and thus finishes the proof of the proposition.  
\end{proof}

Recall that, for a convex function $a:\R\rightarrow\R$, its convex
conjugate $a^{*}:\R\rightarrow\R\cup\left\{ \infty\right\} $ is given
by \foreignlanguage{english}{$a^{*}\left(c\right):=\sup_{\beta\in\R}\left\{ \beta c-a\left(\beta\right)\right\} $
for each $c\in\R$.} We  make use of the following facts about the
convex conjugate. We refer to \cite[Theorem 23.5, 26.5]{rockafellar-convexanalysisMR0274683}
for the proofs and further details. 
\begin{lem}
\label{lem:conjugate}Let $a:\R\rightarrow\R$ be convex and differentiable
and let $a^{*}:\R\rightarrow\R\cup\left\{ \infty\right\} $ be the
convex conjugate of $a$.  
\begin{enumerate}
\item \label{enu:conjugate-facts-1}For each $\beta\in\R$ we have that
$a^{*}\left(a'\left(\beta\right)\right)=a'\left(\beta\right)\beta-a\left(\beta\right)$. 
\item \label{enu:conjugate-facts-2}If $\alpha\notin\overline{a'\left(\R\right)}$
then $a^{*}\left(\alpha\right)=\infty$. 
\item \label{enu:conjugate-facts-3}Suppose that $a$ is twice differentiable and 
$a''>0$ on $\Bbb{R}.$ 
Then $a'\left(\R\right)$ is an open subset of $\R$, $a':\R\rightarrow a'\left(\R\right)$
is invertible, $a^{*}$ is twice differentiable on $a'\left(\R\right)$, 
$(a^{\ast })''>0$ on $a'\left(\R\right)$, 
and we have $\left(a^{*}\right)'\left(a'\left(\beta\right)\right)=\beta$
for each $\beta\in\R$. If moreover $a$ is real analytic, then $a^{*}$
is real analytic on $a'\left(\R\right)$. 
\end{enumerate}
\end{lem}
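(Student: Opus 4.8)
The plan is to obtain all three parts from elementary convex analysis, using only that a convex differentiable function lies above each of its tangent lines and that the derivative $a'$ is non-decreasing (hence continuous, being a differentiable monotone function on $\R$). For (1) I would fix $\beta\in\R$ and invoke the supporting-line inequality $a(\gamma)\ge a(\beta)+a'(\beta)(\gamma-\beta)$, valid for all $\gamma\in\R$; rearranging gives $a'(\beta)\gamma-a(\gamma)\le a'(\beta)\beta-a(\beta)$, so taking the supremum over $\gamma$ yields $a^{*}(a'(\beta))\le a'(\beta)\beta-a(\beta)$, while the choice $\gamma=\beta$ already attains this value, so equality holds.

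For (2) I would set $L_{+}:=\sup a'(\R)=\lim_{\beta\to\infty}a'(\beta)$ and $L_{-}:=\inf a'(\R)=\lim_{\beta\to-\infty}a'(\beta)$, so that $\overline{a'(\R)}=[L_{-},L_{+}]$. If $\alpha>L_{+}$ (in particular $L_{+}<\infty$), then $a(\beta)=a(0)+\int_{0}^{\beta}a'(s)\,ds\le a(0)+L_{+}\beta$ for $\beta\ge 0$, whence $\alpha\beta-a(\beta)\ge(\alpha-L_{+})\beta-a(0)\to\infty$ as $\beta\to\infty$, so $a^{*}(\alpha)=\infty$; the case $\alpha<L_{-}$ is symmetric, letting $\beta\to-\infty$.

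For (3) I assume in addition $a''>0$. Then $a'$ is continuous and strictly increasing, so $a'(\R)$ is an interval, and it is open because $a'(\beta)$ can equal neither $\sup a'(\R)$ nor $\inf a'(\R)$ (e.g. $a'(\beta)<a'(\beta+1)\le\sup a'(\R)$). Thus $a':\R\to a'(\R)$ is a continuous strictly increasing bijection; writing $g:=(a')^{-1}$, the inverse function theorem gives that $g$ is differentiable with $g'=1/(a''\circ g)>0$. For fixed $\alpha\in a'(\R)$ the concave differentiable map $\gamma\mapsto\alpha\gamma-a(\gamma)$ has derivative $\alpha-a'(\gamma)$ vanishing exactly at $\gamma=g(\alpha)$, so its supremum is attained there and $a^{*}(\alpha)=\alpha g(\alpha)-a(g(\alpha))$. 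Differentiating this formula and using $a'(g(\alpha))=\alpha$ gives $(a^{*})'(\alpha)=g(\alpha)+g'(\alpha)\bigl(\alpha-a'(g(\alpha))\bigr)=g(\alpha)$, hence $(a^{*})'(a'(\beta))=\beta$; differentiating once more gives $(a^{*})''(\alpha)=g'(\alpha)=1/a''(g(\alpha))>0$. If $a$ is moreover real analytic, then $a'$ is real analytic with nowhere-vanishing derivative, so $g$ is real analytic by the analytic inverse function theorem, and then $a^{*}(\alpha)=\alpha g(\alpha)-a(g(\alpha))$ is real analytic on $a'(\R)$.

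The only step requiring genuine care is the middle of (3): establishing that $a'(\R)$ is open and that the supremum defining $a^{*}(\alpha)$ is attained at the interior critical point $g(\alpha)$, which is precisely what legitimizes the closed-form expression $a^{*}(\alpha)=\alpha g(\alpha)-a(g(\alpha))$. Once that expression is in hand, the twice-differentiability, the positivity of $(a^{*})''$, and the real analyticity all drop out by the chain rule and the analytic inverse function theorem; everything else is routine and could equally be quoted from Rockafellar.
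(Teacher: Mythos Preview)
Your proof is correct and self-contained. The paper, however, does not actually prove this lemma: it simply refers the reader to Rockafellar's \emph{Convex Analysis} (Theorems 23.5 and 26.5) for the proofs and further details. So the comparison is that you have supplied an elementary direct argument --- supporting-line inequality for (1), a linear lower bound on $\alpha\beta-a(\beta)$ for (2), and the explicit formula $a^{*}(\alpha)=\alpha g(\alpha)-a(g(\alpha))$ with $g=(a')^{-1}$ for (3) --- where the paper only quotes the literature. What your approach buys is that the reader need not consult Rockafellar and can see exactly which features of $a$ (the tangent-line inequality, monotonicity of $a'$, the inverse function theorem) drive each conclusion; what the paper's approach buys is brevity, since these facts are entirely standard in convex analysis and do not require detailed justification in a dynamics paper.
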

The proofs of the following two lemmata are standard (see e.g. \cite{MR1435198,pesindimensiontheoryMR1489237}).
To make this article more self-contained, we include the proofs. In
the next lemma we deduce analytic properties of the convex conjugate
of the free energy function. 
\begin{lem}
\label{lem:legendre-transform-convex-conjugate}Let $f=\left(f_{i}\right)_{i\in I}\in\left(\Rat\right)^{I}$
be expanding and let $\delta>0$ denote the critical exponent of \foreignlanguage{english}{\textup{$f$.}}
Let $\psi=\left(\psi_{i}\right)_{i\in I}$ be a H\"older family associated
with $f$ and let $t:\R\rightarrow\R$ denote the free energy function
for $\left(f,\psi\right)$. Suppose that there exists $\gamma\in\R$
such that $\mathcal{P}\left(\gamma\tilde{\psi},\tilde{f}\right)=0$.
Let $\beta\in\R$ and $\alpha=-t'\left(\beta\right)$. Then we have
\[
-t^{*}\left(-\alpha\right)=\beta\alpha+t\left(\beta\right)=-\frac{h\left(\tilde{\mu}_{\beta}\right)}{\int\tilde{\zeta}d\tilde{\mu}_{\beta}}>0.
\]
If $\tilde{\mu}_{0}=\tilde{\mu}_{\gamma}$ then we have $-t^{*}\left(-\alpha_{0}(\psi )\right)=\delta$
and $-t^{*}\left(-\alpha\right)=-\infty$ for each $\alpha\neq\alpha_{0}(\psi )$.
If $\tilde{\mu}_{0}\neq\tilde{\mu}_{\gamma}$ then $s(\alpha ):=-t^{*}\left(-\alpha \right)$
is a strictly concave real analytic function on $-t'\left(\R\right)$
with maximum value $\delta=-t^{*}\left(-\alpha_{0}(\psi )\right)>0$, and 
$s''<0$ on $-t'\left(\R\right)$.
\end{lem}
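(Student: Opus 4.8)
The plan is to derive the displayed identity by combining the elementary properties of the convex conjugate (Lemma~\ref{lem:conjugate}) with the variational equality satisfied by the equilibrium state $\tilde{\mu}_{\beta}$ and the derivative formula of Proposition~\ref{prop:free-energy-function}, and then to treat separately the cases $\tilde{\mu}_{0}=\tilde{\mu}_{\gamma}$ and $\tilde{\mu}_{0}\neq\tilde{\mu}_{\gamma}$, which by Proposition~\ref{prop:characterisation-strict-convexity} correspond respectively to $t$ being affine and to $t$ being strictly convex with $t''>0$ on $\R$. Apart from the strict inequality $h(\tilde{\mu}_{\beta})>0$, which needs a small dynamical input, everything below is bookkeeping with the convex conjugate.

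For the identity: since $t$ is convex and differentiable (Proposition~\ref{prop:free-energy-function}), Lemma~\ref{lem:conjugate}(\ref{enu:conjugate-facts-1}) applied to $a=t$ gives $t^{*}(t'(\beta))=t'(\beta)\beta-t(\beta)$, and since $-\alpha=t'(\beta)$ this reads $-t^{*}(-\alpha)=\alpha\beta+t(\beta)$. On the other hand $\tilde{\mu}_{\beta}$ is the equilibrium state for $\beta\tilde{\psi}+t(\beta)\tilde{\zeta}$ and $\mathcal{P}(\beta\tilde{\psi}+t(\beta)\tilde{\zeta},\tilde{f})=0$, so the variational equality gives
\[
0=h(\tilde{\mu}_{\beta})+\beta\int\tilde{\psi}\,d\tilde{\mu}_{\beta}+t(\beta)\int\tilde{\zeta}\,d\tilde{\mu}_{\beta}.
\]
Expandingness forces $\int\tilde{\zeta}\,d\tilde{\mu}_{\beta}\le-\log\lambda<0$ (use $\tilde{f}$-invariance of $\tilde{\mu}_{\beta}$ together with $\|(\tilde{f}^{n})'\|\ge C\lambda^{n}$ on $J(\tilde{f})$), and Proposition~\ref{prop:free-energy-function} gives $\alpha=-t'(\beta)=\int\tilde{\psi}\,d\tilde{\mu}_{\beta}/\int\tilde{\zeta}\,d\tilde{\mu}_{\beta}$; substituting $\int\tilde{\psi}\,d\tilde{\mu}_{\beta}=\alpha\int\tilde{\zeta}\,d\tilde{\mu}_{\beta}$ and dividing by $\int\tilde{\zeta}\,d\tilde{\mu}_{\beta}$ yields $\alpha\beta+t(\beta)=-h(\tilde{\mu}_{\beta})/\int\tilde{\zeta}\,d\tilde{\mu}_{\beta}$. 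Finally $h(\tilde{\mu}_{\beta})>0$: since $f$ is non-exceptional, $\tilde{f}$ on $J(\tilde{f})$ has positive topological entropy (it factors onto $\sigma\colon I^{\N}\to I^{\N}$ if $\card(I)\ge2$, and is conjugate to $f_{i}$ on $J(f_{i})$ with $\deg f_{i}\ge2$ if $\card(I)=1$), and $\tilde{\mu}_{\beta}$, being the equilibrium (Gibbs) measure of a Hölder continuous potential for the expanding, topologically exact map $\tilde{f}$, has positive entropy; together with $\int\tilde{\zeta}\,d\tilde{\mu}_{\beta}<0$ this gives $-t^{*}(-\alpha)=\alpha\beta+t(\beta)>0$.

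If $\tilde{\mu}_{0}=\tilde{\mu}_{\gamma}$, then $t(\beta)=\delta-\beta\delta/\gamma$ for all $\beta\in\R$ by Proposition~\ref{prop:characterisation-strict-convexity}; here $\gamma\neq0$ since $t(\gamma)=0\neq\delta=t(0)$, and $\alpha_{0}(\psi)=-t'(0)=\delta/\gamma$ by Proposition~\ref{prop:free-energy-function}. A direct evaluation gives $t^{*}(c)=\sup_{\beta\in\R}\{\beta(c+\delta/\gamma)\}-\delta$, which equals $-\delta$ if $c=-\delta/\gamma$ and $+\infty$ otherwise; hence $-t^{*}(-\alpha_{0}(\psi))=\delta$ and $-t^{*}(-\alpha)=-\infty$ for all $\alpha\neq\alpha_{0}(\psi)$.

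If $\tilde{\mu}_{0}\neq\tilde{\mu}_{\gamma}$, then $t''>0$ on $\R$ by Proposition~\ref{prop:characterisation-strict-convexity} and $t$ is real analytic by Proposition~\ref{prop:free-energy-function}, so Lemma~\ref{lem:conjugate}(\ref{enu:conjugate-facts-3}) applies with $a=t$: $t'(\R)$ is open, $t^{*}$ is real analytic on $t'(\R)$ with $(t^{*})''>0$ there, and $(t^{*})'(t'(\beta))=\beta$. Hence $s(\alpha):=-t^{*}(-\alpha)$ is real analytic on $-t'(\R)$, $s''(\alpha)=-(t^{*})''(-\alpha)<0$ there (so $s$ is strictly concave), and $s'(\alpha)=(t^{*})'(-\alpha)$ vanishes precisely when $-\alpha=t'(0)$, i.e.\ at $\alpha=-t'(0)=\alpha_{0}(\psi)$; there, by the identity above, $s(\alpha_{0}(\psi))=\alpha_{0}(\psi)\cdot0+t(0)=\delta$, which is $>0$ since $f$ is non-exceptional. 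Thus $s$ has unique maximum value $\delta=-t^{*}(-\alpha_{0}(\psi))$, as claimed. The only step beyond routine convex-analytic manipulation is the positivity $h(\tilde{\mu}_{\beta})>0$.
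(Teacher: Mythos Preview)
Your proof is correct and follows the paper's argument almost verbatim for the identity $-t^{*}(-\alpha)=\beta\alpha+t(\beta)=-h(\tilde{\mu}_{\beta})/\int\tilde{\zeta}\,d\tilde{\mu}_{\beta}$ and for the analysis of the two cases $\tilde{\mu}_{0}=\tilde{\mu}_{\gamma}$ and $\tilde{\mu}_{0}\neq\tilde{\mu}_{\gamma}$ via Proposition~\ref{prop:characterisation-strict-convexity} and Lemma~\ref{lem:conjugate}.

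The one genuine difference is how you establish the strict inequality $>0$. You argue directly that $h(\tilde{\mu}_{\beta})>0$, invoking the fact that the equilibrium state of a H\"older potential on a topologically exact expanding map with positive topological entropy has positive metric entropy. The paper instead stays entirely within the convex-analytic framework already in place: assuming $-t^{*}(t'(\beta_{0}))=0$ for some $\beta_{0}$, in the affine case this forces $0=-t^{*}(t'(0))=t(0)=\delta$, contradicting non-exceptionality; in the strictly convex case, since $-t^{*}\ge 0$ on the open set $t'(\R)$ (from $h(\tilde{\mu}_{\beta})\ge 0$) and equals $0$ at the interior point $t'(\beta_{0})$, this point would be an interior minimum of a function with $(-t^{*})''<0$, which is impossible. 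The paper's route is fully self-contained (it needs only $\delta>0$ and Lemma~\ref{lem:conjugate}), whereas yours is shorter but imports a dynamical fact not proved in the paper; the assertion ``positive topological entropy $\Rightarrow$ $h(\tilde{\mu}_{\beta})>0$'' is true but not automatic, so you should add a line of justification---e.g.\ via Rokhlin's entropy formula together with the observation that $\tilde{f}$ is everywhere at least two-to-one on $J(\tilde{f})$ since $f$ is non-exceptional.
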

\begin{proof}
That $-t^{*}\left(-\alpha\right)=\beta\alpha+t\left(\beta\right)$
follows from Lemma \ref{lem:conjugate} (\ref{enu:conjugate-facts-1}).
Since $\tilde{\mu}_{\beta}$ is the equilibrium state for $\beta\tilde{\psi}+t\left(\beta\right)\tilde{\zeta}$
and $\mathcal{P}\bigl(\beta\tilde{\psi}+t\left(\beta\right)\tilde{\zeta},\tilde{f}\bigr)=0$,
we have $-h\left(\tilde{\mu}_{\beta}\right)=\int\beta\tilde{\psi}+t\left(\beta\right)\tilde{\zeta}d\tilde{\mu}_{\beta}$.
Combining with Proposition \ref{prop:free-energy-function}, we obtain
$-t^{*}\left(-\alpha\right)=\beta\alpha+t\left(\beta\right)=-h\left(\tilde{\mu}_{\beta}\right)\big/\int\tilde{\zeta}d\tilde{\mu}_{\beta}$. 

To prove that $-h\left(\tilde{\mu}_{\beta}\right)\big/\int\tilde{\zeta}d\tilde{\mu}_{\beta}>0$
first observe that $-h\left(\tilde{\mu}_{\beta}\right)\big/\int\tilde{\zeta}d\tilde{\mu}_{\beta}\ge0$.
Now suppose for a contradiction that there exists $\beta_{0}\in\R$
such that $-h\left(\tilde{\mu}_{\beta_{0}}\right)\big/\int\tilde{\zeta}d\tilde{\mu}_{\beta_{0}}=-t^{*}\left(t'\left(\beta_{0}\right)\right)=0$.
We distinguish two cases according to Proposition \ref{prop:characterisation-strict-convexity}.
First suppose that $t'$ is constant on $\R$. Then we have $0=-t^{*}\left(t'\left(\beta_{0}\right)\right)=-t^{*}\left(t'\left(0\right)\right)=t\left(0\right)$
by Lemma \ref{lem:conjugate} (\ref{enu:conjugate-facts-1}). This
gives the desired contradiction, because $f$ is non-exceptional giving
that $t\left(0\right)=\delta>0$. For the remaining case, we may assume
that $t''>0$ on $\R .$ Since $-t^{*}\left(c\right)\ge0$ for
all $c$ in the open neighbourhood $t'\left(\R\right)$ of $t'\left(\beta_{0}\right)$
and $-t^{*}\left(t'\left(\beta_{0}\right)\right)=0$, we conclude
that the derivative of $t^{*}$ vanishes in $t'\left(\beta_{0}\right)$.
 By Lemma \ref{lem:conjugate} (\ref{enu:conjugate-facts-3}), it
follows that zero is a local maximum of $-t^{*}$ in a neighbourhood
of $t'\left(\beta_{0}\right)$, which implies that $-t^{*}$ is constant
in a neighbourhood of $t'\left(\beta_{0}\right)$. However, by Lemma
\ref{lem:conjugate} (\ref{enu:conjugate-facts-3}), we have that
$(t^{*})''>0$ on $t'\left(\R\right)$ which is a contradiction.
We have thus shown that $-h\left(\tilde{\mu}_{\beta}\right)\big/\int\tilde{\zeta}d\tilde{\mu}_{\beta}>0$
for all $\beta\in\R$. 

To verify the remaining assertions, first suppose that $\tilde{\mu}_{0}=\tilde{\mu}_{\gamma}$.
By Proposition \ref{prop:free-energy-function} and \ref{prop:characterisation-strict-convexity}
we then have that $t'\left(\beta\right)=t'\left(0\right)=-\alpha_{0}(\psi )$
for all $\beta\in\R$. By Lemma \ref{lem:conjugate} (\ref{enu:conjugate-facts-1})
and (\ref{enu:conjugate-facts-2}) we conclude that $-t^{*}\left(-\alpha_{0}(\psi )\right)=-t^{*}\left(t'\left(0\right)\right)=t\left(0\right)=\delta$
and $-t^{*}\left(-\alpha\right)=-\infty$ 
if $\alpha\neq\alpha_{0}(\psi )$.
Now suppose that $\tilde{\mu}_{0}\neq\tilde{\mu}_{\gamma}$. By Proposition
\ref{prop:free-energy-function} and \ref{prop:characterisation-strict-convexity},
we have that $t$ is strictly convex and real analytic. By Lemma \ref{lem:conjugate}
(\ref{enu:conjugate-facts-3}) we have that $-t^{*}$ is a strictly
concave and real analytic function on $-t'\left(\R\right)$. Also, 
$s''<0$ on $t'(\Bbb{R}).$ 
Moreover,
Lemma \ref{lem:conjugate} (\ref{enu:conjugate-facts-3}) implies
that the derivative of $t^{*}$ vanishes in 
$t'\left(0\right)=-\alpha_{0}(\psi )$,
which shows that $-t^{*}$ attains a maximum in $-\alpha_{0}(\psi )$ with
$-t^{*}\left(-\alpha_{0}(\psi )\right)=\delta$. 
\end{proof}
For the support of the measures $\tilde{\nu}_{\beta}$ and $\nu_{\beta}$
we prove the following lemma. Recall that $\mathcal{F}\left(\alpha,\psi\right)$
is the continuous image of the Borel set \foreignlanguage{english}{$\mathcal{\tilde{F}}\left(\alpha,\psi\right)$.
In particular, $\mathcal{F}\left(\alpha,\psi\right)$ is a Suslin
set and thus} $\nu_{\beta}$-measurable. We refer to \cite[p65-70]{MR0257325}
for details on Suslin sets. 
\begin{lem}
\label{lem:support-of-nu-beta}Let $f=\left(f_{i}\right)_{i\in I}\in\left(\Rat\right)^{I}$
be expanding. Let $\psi=\left(\psi_{i}\right)_{i\in I}$ be a H\"older
family associated with $f$ and let $t:\R\rightarrow\R$ denote the
free energy function for $\left(f,\psi\right)$. For each $\beta\in\R$,
we have that 
\[
\tilde{\nu}_{\beta}\left(\mathcal{\tilde{F}}\left(-t'\left(\beta\right),\psi\right)\right)=\nu_{\beta}\left(\mathcal{F}\left(-t'\left(\beta\right),\psi\right)\right)=1.
\]
In particular, for each $\alpha\in-t'\left(\R\right)$, we have that
$\mathcal{F}\left(\alpha,\psi\right)$ is non-empty.\end{lem}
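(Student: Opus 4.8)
The plan is to deduce the full measure statement from the ergodicity of the equilibrium state $\tilde{\mu}_{\beta}$ together with its mutual absolute continuity with the conformal measure $\tilde{\nu}_{\beta}$, and then to push everything forward under $\pi_{\Chat}$.

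First I would recall from Lemma \ref{lem:existence-skew-product-conformalmeasure} that $\tilde{\mu}_{\beta} = \tilde{h}_{\beta}\, d\tilde{\nu}_{\beta}$ for a strictly positive continuous function $\tilde{h}_{\beta}$ on $J(\tilde{f})$, so that $\tilde{\mu}_{\beta}$ and $\tilde{\nu}_{\beta}$ have exactly the same null sets, and that $\tilde{\mu}_{\beta}$ is exact, hence ergodic, for $\tilde{f}$. Applying Birkhoff's ergodic theorem to the (Hölder, hence continuous and $\tilde{\mu}_{\beta}$-integrable) functions $\tilde{\psi}$ and $\tilde{\zeta}$ gives, for $\tilde{\mu}_{\beta}$-almost every $x \in J(\tilde{f})$, that $n^{-1} S_{n}\tilde{\psi}(x) \rightarrow \int \tilde{\psi}\, d\tilde{\mu}_{\beta}$ and $n^{-1} S_{n}\tilde{\zeta}(x) \rightarrow \int \tilde{\zeta}\, d\tilde{\mu}_{\beta}$. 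The one point requiring care is that $\int \tilde{\zeta}\, d\tilde{\mu}_{\beta} \neq 0$: since $f$ is expanding there are $C>0$ and $\lambda>1$ with $S_{n}(-\tilde{\zeta})(x) = \log \Vert (\tilde{f}^{n})'(x)\Vert \ge \log C + n\log\lambda$ for all $n\in\N$ and all $x\in J(\tilde{f})$, and dividing by $n$ and letting $n\rightarrow\infty$ yields $-\int \tilde{\zeta}\, d\tilde{\mu}_{\beta} \ge \log\lambda > 0$. Hence, for $\tilde{\mu}_{\beta}$-almost every $x$,
\[
\lim_{n\rightarrow\infty}\frac{S_{n}\tilde{\psi}(x)}{S_{n}\tilde{\zeta}(x)} = \frac{\int \tilde{\psi}\, d\tilde{\mu}_{\beta}}{\int \tilde{\zeta}\, d\tilde{\mu}_{\beta}} = -t'(\beta),
\]
the last equality by Proposition \ref{prop:free-energy-function}. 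Since $\tilde{\mathcal{F}}(-t'(\beta),\psi)$ is a Borel set (it is defined by convergence of a ratio of continuous functions), this shows $\tilde{\mu}_{\beta}\bigl(\tilde{\mathcal{F}}(-t'(\beta),\psi)\bigr) = 1$, and by mutual absolute continuity $\tilde{\nu}_{\beta}\bigl(\tilde{\mathcal{F}}(-t'(\beta),\psi)\bigr) = 1$ as well.

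To finish, I would transfer the statement to $\Chat$. By definition $\mathcal{F}(\alpha,\psi) = \pi_{\Chat}(\tilde{\mathcal{F}}(\alpha,\psi))$, so $\tilde{\mathcal{F}}(\alpha,\psi) \subseteq \pi_{\Chat}^{-1}(\mathcal{F}(\alpha,\psi))$, and therefore, using the $\nu_{\beta}$-measurability of the Suslin set $\mathcal{F}(\alpha,\psi)$ noted before the lemma,
\[
\nu_{\beta}\bigl(\mathcal{F}(-t'(\beta),\psi)\bigr) = \tilde{\nu}_{\beta}\bigl(\pi_{\Chat}^{-1}(\mathcal{F}(-t'(\beta),\psi))\bigr) \ge \tilde{\nu}_{\beta}\bigl(\tilde{\mathcal{F}}(-t'(\beta),\psi)\bigr) = 1,
\]
so equality holds since $\nu_{\beta}$ is a probability measure. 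The final ``in particular'' is then immediate: given $\alpha\in -t'(\R)$, choose $\beta\in\R$ with $\alpha = -t'(\beta)$; then $\mathcal{F}(\alpha,\psi)$ carries the probability measure $\nu_{\beta}$ and so is non-empty.

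I do not expect a serious obstacle in this argument; it is essentially the standard ``conditional variational principle / Birkhoff'' computation. The only delicate points are the non-vanishing of $\int\tilde{\zeta}\,d\tilde{\mu}_{\beta}$, which is handled cleanly by the expanding hypothesis as above, and the measure-theoretic bookkeeping in passing from $J(\tilde{f})$ to $J(G)$ via $\pi_{\Chat}$, which has already been prepared in the remarks preceding the statement.
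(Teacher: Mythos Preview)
Your proof is correct and follows essentially the same route as the paper's: Birkhoff's ergodic theorem applied to the ergodic equilibrium state $\tilde{\mu}_{\beta}$, the identification of the limiting ratio via Proposition~\ref{prop:free-energy-function}, and the transfer to $\tilde{\nu}_{\beta}$ and then $\nu_{\beta}$ by mutual absolute continuity and pushforward. If anything, you supply more detail than the paper does, in particular the explicit verification that $\int\tilde{\zeta}\,d\tilde{\mu}_{\beta}<0$ and the careful justification of the pushforward step.
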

\begin{proof}
Let $\beta\in\R$. Since $\tilde{\mu}_{\beta}$ is ergodic by Lemma
\ref{lem:existence-skew-product-conformalmeasure}, we have by Birkhoff's
ergodic theorem that for $\tilde{\mu}_{\beta}$-almost every $x\in J\left(\tilde{f}\right)$,
\[
\lim_{n\rightarrow\infty}\frac{S_{n}\tilde{\psi}\left(x\right)}{S_{n}\tilde{\zeta}\left(x\right)}=\frac{\int\tilde{\psi}d\tilde{\mu}_{\beta}}{\int\tilde{\zeta}d\tilde{\mu}_{\beta}}.
\]
Since $\int\tilde{\psi}d\tilde{\mu}_{\beta}\big/\int\tilde{\zeta}d\tilde{\mu}_{\beta}=-t'\left(\beta\right)$
by Proposition \ref{prop:free-energy-function} and $\tilde{\mu}_{\beta}$
and $\tilde{\nu}_{\beta}$ are equivalent by Lemma \ref{lem:existence-skew-product-conformalmeasure},
we have that $\tilde{\nu}_{\beta}\left(\mathcal{\tilde{F}}\left(-t'\left(\beta\right),\psi\right)\right)=1$.
Consequently, we have that $\nu_{\beta}\left(\mathcal{F}\left(-t'\left(\beta\right),\psi\right)\right)=1$. \end{proof}
\begin{rem}
\label{borelsest-of-fullmeasure}Under the assumptions of Lemma \ref{lem:support-of-nu-beta}
there exists a Borel measurable subset $A\subset\mathcal{F}\left(-t'\left(\beta\right),\psi\right)$
such that $\nu_{\beta}\left(A\right)=1$. \end{rem}
\begin{proof}
Since the Borel measure $\tilde{\nu}_{\beta}$ is regular, there exists
a family of compact subsets $\left(K_{n}\right)_{n\in\N}$, $K_{n}\subset\mathcal{\tilde{F}}\left(-t'\left(\beta\right),\psi\right)$,
such that $\tilde{\nu}_{\beta}\left(\mathcal{\tilde{F}}\left(-t'\left(\beta\right),\psi\right)\setminus\bigcup_{n\in\N}K_{n}\right)=0$.
Hence, for the Borel set $\bigcup_{n\in\N}\pi_{\Chat}\left(K_{n}\right)\subset\mathcal{F}\left(-t'\left(\beta\right),\psi\right)$,
we have that $\nu_{\beta}\left(\bigcup_{n\in\N}\pi_{\Chat}\left(K_{n}\right)\right)=1$. 
\end{proof}

\section{\label{sec:Multifractal-Formalism}Multifractal Formalism}

To establish the multifractal formalism for expanding multi-maps,
we investigate the local dimension of the measures $\nu_{\beta}$
for $\beta\in\R$ (see Definition \ref{def-betaconformal-eigenmeasures}).
To state the next lemma, we have to make a further definition. Recall
that $\alpha_{0}\left(\psi\right):=\int\tilde{\psi}d\tilde{\mu}_{0}\big/\int\tilde{\zeta}d\tilde{\mu}_{0}$
for a H\"older family $\psi$.
\begin{defn}
Let $f=\left(f_{i}\right)_{i\in I}\in\left(\Rat\right)^{I}$ be expanding.
Let $\psi=\left(\psi_{i}\right)_{i\in I}$ be a H\"older family associated
with $f$. For $\alpha\in\R$ we define
\[
\tilde{\mathcal{F}}^{\sharp}\left(\alpha,\psi\right):=\begin{cases}
\left\{ x\in J\left(\tilde{f}\right):\limsup_{n\rightarrow\infty}S_{n}\tilde{\psi}\left(x\right)\big/S_{n}\tilde{\zeta}\left(x\right)\ge\alpha\right\} , & \mbox{for }\alpha\ge\alpha_{0}\left(\psi\right),\\
\left\{ x\in J\left(\tilde{f}\right):\liminf_{n\rightarrow\infty}S_{n}\tilde{\psi}\left(x\right)\big/S_{n}\tilde{\zeta}\left(x\right)\le\alpha\right\} , & \mbox{for }\alpha<\alpha_{0}\left(\psi\right).
\end{cases}
\]
Moreover, we set $\mathcal{F}^{\sharp}\left(\alpha,\psi\right):=\pi_{\Chat}\left(\tilde{\mathcal{F}}^{\sharp}\left(\alpha,\psi\right)\right)$. 
\end{defn}
The proof of the following lemma mimics the proof of \cite[Theorem 3.4]{MR1625124},
where $\nu_{0}$ is considered.  To state the lemma, let $B\left(z,r\right)$
denote the spherical ball of radius $r$ centred at $z\in\Chat$.
Recall from Lemma \ref{lem:support-subconformal-contains-juliaset}
that $\nu_{\beta}\left(B\left(z,r\right)\right)>0$ for each $z\in J\left(G\right)$
and $r>0$. Thus, we have that $\log\nu_{\beta}\left(B\left(z,r\right)\right)$
is a well-defined real number.
\begin{lem}
\label{lem:upper-bound}Let $f=\left(f_{i}\right)_{i\in I}\in\left(\Rat\right)^{I}$
be expanding and let \foreignlanguage{english}{\textup{$G=\left\langle f_{i}:i\in I\right\rangle $.}}
Let $\psi=\left(\psi_{i}\right)_{i\in I}$ be a H\"older family associated
with $f$ and let $t:\R\rightarrow\R$ denote the free energy function
for $\left(f,\psi\right)$. Let $\alpha,\beta\in\R$ such that, either
(1) $\alpha\ge\alpha_{0}\left(\psi\right)$ and $\beta\le0$, or (2)
$\alpha\le\alpha_{0}\left(\psi\right)$ and $\beta\ge0$. For each
$z\in\mathcal{F}^{\sharp}\left(\alpha,\psi\right)$ we then have that
\[
0\le\liminf_{r\rightarrow0}\frac{\log\nu_{\beta}\left(B\left(z,r\right)\right)}{\log r}\le t\left(\beta\right)+\beta\alpha.
\]
\end{lem}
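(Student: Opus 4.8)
The plan is to exploit the conformality of $\tilde\nu_\beta$ together with the bounded-distortion and expanding properties of $\tilde f$, transferring a fibrewise estimate on $\tilde\nu_\beta$ down to $\nu_\beta$ via the projection $\pi_{\Chat}$. First I would fix $z\in\mathcal{F}^\sharp(\alpha,\psi)$ and choose $\omega\in I^{\N}$ with $x:=(\omega,z)\in J(\tilde f)$ realising the $\limsup$ (resp.\ $\liminf$) defining $\mathcal F^\sharp$. For each $n$, let $x_n=\tilde f^{\,n}(x)$ and consider the inverse branch of $\tilde f^{\,n}$ along $\omega$ that maps a fixed-size ball $B(\pi_{\Chat}(x_n),\epsilon_0)$ back to a neighbourhood of $z$; expandingness guarantees this branch is a contraction with derivative comparable to $\|(\tilde f^{\,n})'(x)\|^{-1}=\mathrm{e}^{S_n\tilde\zeta(x)}$, and bounded distortion (available since $\tilde f$ is distance expanding, so we have a Markov partition and the standard Koebe-type estimates from \cite{MR2656475}) tells us this image contains a ball $B(z,r_n)$ and is contained in a ball $B(z,C r_n)$ with $\log r_n = S_n\tilde\zeta(x) + O(1)$. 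Since $S_n\tilde\zeta(x)\to-\infty$ at a linear rate in $n$ by expandingness, the radii $r_n$ decrease to $0$ geometrically, so it suffices to bound $\log\nu_\beta(B(z,r_n))/\log r_n$ along this sequence (interpolating to general $r$ costs only a bounded factor, again because $r_{n+1}/r_n$ is bounded below).

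Next I would estimate $\nu_\beta(B(z,r_n))$ from above. Because $\nu_\beta=(\pi_{\Chat})_*\tilde\nu_\beta$, we have $\nu_\beta(B(z,r_n))\le\tilde\nu_\beta(\pi_{\Chat}^{-1}(B(z,Cr_n)))$, and the preimage ball-neighbourhood of $x$ described above is one of the pieces of $\tilde f^{-n}$ of the set $\pi_{\Chat}^{-1}(B(\pi_{\Chat}(x_n),\epsilon_0))$; applying $\tilde\varphi$-conformality of $\tilde\nu_\beta$ with $\tilde\varphi=\beta\tilde\psi+t(\beta)\tilde\zeta$ over this injective branch gives
\[
\nu_\beta(B(z,r_n))\le \mathrm{e}^{-S_n\tilde\varphi(x)+O(1)}\tilde\nu_\beta\bigl(\pi_{\Chat}^{-1}(B(\pi_{\Chat}(x_n),\epsilon_0))\bigr)\le \mathrm{e}^{-\beta S_n\tilde\psi(x)-t(\beta)S_n\tilde\zeta(x)+O(1)}.
\]
Taking logarithms and dividing by $\log r_n = S_n\tilde\zeta(x)+O(1)$ (note $S_n\tilde\zeta(x)<0$, so the inequality direction is determined by the sign of the numerator's coefficients) yields
\[
\frac{\log\nu_\beta(B(z,r_n))}{\log r_n}\le t(\beta)+\beta\,\frac{S_n\tilde\psi(x)}{S_n\tilde\zeta(x)}+o(1).
\]

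Finally I would pass to the limit inferior in $n$. The sign hypotheses on $\beta$ versus $\alpha_0(\psi)$ are exactly what makes the quantity $\beta\cdot S_n\tilde\psi(x)/S_n\tilde\zeta(x)$ behave correctly: in case (1), $\beta\le 0$ and $\limsup_n S_n\tilde\psi(x)/S_n\tilde\zeta(x)\ge\alpha$, so $\liminf_n \beta S_n\tilde\psi(x)/S_n\tilde\zeta(x)\le\beta\alpha$; in case (2), $\beta\ge 0$ and $\liminf_n S_n\tilde\psi(x)/S_n\tilde\zeta(x)\le\alpha$, so again $\liminf_n \beta S_n\tilde\psi(x)/S_n\tilde\zeta(x)\le\beta\alpha$. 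Hence $\liminf_{r\to0}\log\nu_\beta(B(z,r))/\log r\le t(\beta)+\beta\alpha$. The lower bound $0\le\liminf_{r\to0}\log\nu_\beta(B(z,r))/\log r$ is immediate since $\nu_\beta$ is a probability measure, so $\nu_\beta(B(z,r))\le 1$ and $\log r<0$ for small $r$. The main obstacle I anticipate is the careful bookkeeping of the bounded-distortion constants and of the fact that each $f_i$ (hence $\tilde f$ along a fibre) need not be injective globally: one must work with the inverse branch determined by the orbit $(x,\tilde f x,\dots,\tilde f^n x)$ and verify that the relevant sets on which conformality is applied are genuinely injectivity domains of $\tilde f$; this is where the open-distance-expanding structure and the Markov partition from Section \ref{sec:Thermodynamic-formalism-for} do the real work, exactly as in the single-map argument of \cite[Theorem 3.4]{MR1625124}.
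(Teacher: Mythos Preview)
Your argument has the central inequality reversed. To bound $\liminf_{r\to0}\log\nu_\beta(B(z,r))/\log r$ from \emph{above} you must bound $\nu_\beta(B(z,r))$ from \emph{below}, because $\log r<0$ and dividing by a negative number reverses inequalities. You instead try to bound $\nu_\beta(B(z,r_n))$ from above. Even granting the sign in the exponent (conformality actually gives $\tilde\nu_\beta(A)\asymp \mathrm{e}^{+S_n\tilde\varphi(x)}$ for the single branch piece $A$, not $\mathrm{e}^{-S_n\tilde\varphi(x)}$), the step $\nu_\beta(B(z,r_n))=\tilde\nu_\beta\bigl(\pi_{\Chat}^{-1}(B(z,r_n))\bigr)\le\tilde\nu_\beta(A)$ is false: $A$ sits only over the cylinder $[\omega_1,\dots,\omega_n]$, while $\pi_{\Chat}^{-1}(B(z,r_n))$ contains all fibres over $B(z,r_n)$, so the containment goes the other way. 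Your parenthetical remark that ``the inequality direction is determined by the sign of the numerator's coefficients'' is not how division works; the flip depends solely on the sign of $\log r_n$.

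The paper's proof proceeds in the opposite direction and uses a different measure. It works with the \emph{sub}conformal measure $\nu_\beta$ on $J(G)$ (Lemma~\ref{lem:existence-sub-conformalmeasure-on-juliaset}) rather than the conformal $\tilde\nu_\beta$ on $J(\tilde f)$. Subconformality applied to the forward map $f_{\omega|_n}$ on the inverse-branch image $\phi_n(B(z_n,c_2^{-1}R))\subset B(z,r_n)$ gives
\[
\nu_\beta\bigl(B(z_n,c_2^{-1}R)\bigr)\le\int_{\phi_n(B(z_n,c_2^{-1}R))}\mathrm{e}^{-S_n(\beta\tilde\psi+t(\beta)\tilde\zeta)(\omega,\cdot)}\,d\nu_\beta\le\nu_\beta(B(z,r_n))\cdot\max\mathrm{e}^{-S_n\tilde\varphi}.
\]
Since $\supp(\nu_\beta)=J(G)$ and $z_n\in J(G)$, the left-hand side is bounded below by a uniform $M>0$, yielding the required \emph{lower} bound $\nu_\beta(B(z,r_n))\ge M\,\mathrm{e}^{S_n\tilde\varphi(x)-O(1)}$. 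After dividing by $\log r_n<0$ this becomes the claimed upper bound on the local-dimension quotient, and the sign hypothesis on $\beta$ is used exactly as you describe to pass to the $\limsup$ (resp.\ $\liminf$) of $S_n\tilde\psi/S_n\tilde\zeta$ along a subsequence.
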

\begin{proof}
We  only consider the case that $\alpha\ge\alpha_{0}\left(\psi\right)$
and $\beta\le0$. The remaining case can be proved in a similar fashion.
Let $z\in\mathcal{F}^{\sharp}\left(\alpha,\psi\right)$. There exists
$\omega\in I^{\N}$ such that $\left(\omega,z\right)\in\tilde{\mathcal{F}}^{\sharp}\left(\alpha,\psi\right)$.
Since $\alpha\ge\alpha_{0}\left(\psi\right)$ we have 
\begin{equation}
\limsup_{n\rightarrow\infty}\frac{S_{n}\tilde{\psi}\left(\left(\omega,z\right)\right)}{S_{n}\tilde{\zeta}\left(\left(\omega,z\right)\right)}\ge\alpha.\label{eq:proof-upperbound-1}
\end{equation}
Since $\left(\omega,z\right)\in J\left(\tilde{f}\right)$ we have
$z\in J_{\omega}$ by Lemma \ref{lem:exp-implies-compactfibres}.
 We set $z_{n}:=f_{\omega_{|n}}\left(z\right)$ for each $n\in\N$.
By Proposition \ref{prop:basic-facts} we have $z_{n}\in J\left(G\right)$.
Since $f$ is expanding, we have that $G$ is hyperbolic and there
exists a $G$-forward invariant non-empty compact subset of $F\left(G\right)$
by Proposition \ref{prop:expandingness-criteria} (\ref{enu:exp-implies-hyperbolicloxodromic}).
Hence, there exists $R>0$ such that, for each $n\in\N$, there exists
a holomorphic branch $\phi_{n}:B\left(z_{n},R\right)\rightarrow\Chat$
of $f_{\omega_{|n}}^{-1}$ such that $f_{\omega_{|n}}\circ\phi_{n}=\id\big|_{B\left(z_{n},R\right)}$
and $\phi\bigl(f_{\omega_{|n}}\left(z\right)\bigr)=z$. By Koebe's
distortion theorem, there exist constants $c_{1}>0$ and $c_{2}>1$
such that for each $n\in\N$, 
\begin{equation}
\phi_{n}\left(B\left(z_{n},c_{2}^{-1}R\right)\right)\subset B\left(z,c_{1}^{-1}R\left\Vert \phi_{n}'\left(z_{n}\right)\right\Vert \right).\label{eq:proof-upperbound-2}
\end{equation}
Using that $\nu_{\beta}$ is $\beta\psi+t\left(\beta\right)\zeta$-subconformal
by Lemma \ref{lem:existence-sub-conformalmeasure-on-juliaset} and
the set inclusion in (\ref{eq:proof-upperbound-2}), we obtain that
for each $n\in\N$, 
\begin{eqnarray*}
\nu_{\beta}\left(f_{\omega_{|n}}\left(\phi_{n}\left(B\left(z_{n},c_{2}^{-1}R\right)\right)\right)\right) & \le & \int_{\phi_{n}\left(B\left(z_{n},c_{2}^{-1}R\right)\right)}\e^{-S_{n}\left(\beta\tilde{\psi}+t\left(\beta\right)\tilde{\zeta}\right)\left(\omega,\cdot\right)}d\nu_{\beta}\\
 & \le & \nu_{\beta}\left(B\left(z,c_{1}^{-1}R\left\Vert \phi_{n}'\left(z_{n}\right)\right\Vert \right)\right)\max_{y\in\phi_{n}\left(B\left(z_{n},c_{2}^{-1}R\right)\right)}\e^{-S_{n}\left(\beta\tilde{\psi}+t\left(\beta\right)\tilde{\zeta}\right)\left(\omega,y\right)}.
\end{eqnarray*}
Since $\supp\left(\nu_{\beta}\right)=J\left(G\right)$ by Lemma \ref{lem:support-subconformal-contains-juliaset}
and by the compactness of $J\left(G\right)$, there exists a constant
$M>0$ such that \foreignlanguage{english}{$\nu_{\beta}\bigl(f_{\omega_{|n}}\left(\phi_{n}\left(B\left(z_{n},c_{2}^{-1}R\right)\right)\right)\bigr)>M$
for all $n\in\N$.} Using that $f$ is expanding and that $\beta\tilde{\psi}+t\left(\beta\right)\tilde{\zeta}$
is H\"older continuous, one verifies that there exists a constant
$C>1$ such that, for all $n\in\N$ and for all $x\in\phi_{n}\left(B\left(z_{n},c_{2}^{-1}R\right)\right)$,
\[
\left|S_{n}\left(\beta\tilde{\psi}+t\left(\beta\right)\tilde{\zeta}\right)\left(\omega,x\right)-S_{n}\left(\beta\tilde{\psi}+t\left(\beta\right)\tilde{\zeta}\right)\left(\omega,z\right)\right|\le\log C.
\]
From $\beta\le0$ and (\ref{eq:proof-upperbound-1}) it follows that
there exists a sequence $\left(n_{j}\right)\in\N^{\N}$ tending to
infinity, such that, for each $\epsilon>0$, we have for all $j$
sufficiently large, 
\[
\max_{x\in\phi_{n_{j}}\bigl(B\bigl(z_{n_{j}},c_{2}^{-1}R\bigr)\bigr)}\e^{-S_{n_{j}}\left(\beta\tilde{\psi}+t\left(\beta\right)\tilde{\zeta}\right)\left(\omega,x\right)}\le C\e^{-S_{n_{j}}\tilde{\zeta}\left(\omega,z\right)\left(\beta\alpha+t\left(\beta\right)-\beta\epsilon\right)}=C\left\Vert \phi_{n_{j}}'\left(z_{n_{j}}\right)\right\Vert ^{-\left(\beta\alpha+t\left(\beta\right)-\beta\epsilon\right)}.
\]
We have thus shown that $0<M\le C\nu_{\beta}\left(B\left(z,c_{1}^{-1}R\left\Vert \phi_{n_{j}}'\left(z_{n_{j}}\right)\right\Vert \right)\right)\left\Vert \phi_{n_{j}}'\left(z_{n_{j}}\right)\right\Vert ^{-\left(\beta\alpha+t\left(\beta\right)-\beta\epsilon\right)}$
for all $j$ sufficiently large. Set $r_{j}:=c_{1}^{-1}R\left\Vert \phi_{n_{j}}'\left(z_{n_{j}}\right)\right\Vert $.
Clearly, we have that $\lim_{j}r_{j}=0$. Hence, we have 
\[
0\le\liminf_{r\rightarrow0}\frac{\log\nu_{\beta}\left(B\left(z,r\right)\right)}{\log r}\le\beta\alpha+t\left(\beta\right)-\beta\epsilon.
\]
Since $\epsilon$ was arbitrary, the proof is complete. \end{proof}
\begin{lem}
\label{lem:t-star-properties}Let $f=\left(f_{i}\right)_{i\in I}\in\left(\Rat\right)^{I}$
be expanding. Let $\psi=\left(\psi_{i}\right)_{i\in I}$ be a H\"older
family associated with $f$ and let $t:\R\rightarrow\R$ denote the
free energy function for $\left(f,\psi\right)$. Then we have the
following.
\begin{enumerate}
\item \label{enu:t-star-formula}For each $\alpha\in\R$ we have 
\[
-t^{*}\left(-\alpha\right)=\begin{cases}
\inf_{\beta\le0}\left\{ t\left(\beta\right)+\beta\alpha\right\} , & \mbox{for }\alpha\ge\alpha_{0}\left(\psi\right),\\
\inf_{\beta\ge0}\left\{ t\left(\beta\right)+\beta\alpha\right\} , & \mbox{for }\alpha\le\alpha_{0}\left(\psi\right).
\end{cases}
\]

\item \label{enu:ourside-range-empty}Let $\alpha\in\R$. If $-t^{*}\left(-\alpha\right)<0$
then $\mathcal{F}\left(\alpha,\psi\right)=\mathcal{F}^{\sharp}\left(\alpha,\psi\right)=\emptyset$.
In particular, we have that $\mathcal{F}\left(\alpha,\psi\right)=\mathcal{F}^{\sharp}\left(\alpha,\psi\right)=\emptyset$,
if $\alpha\notin-\overline{t'\left(\R\right)}$. 
\end{enumerate}
\end{lem}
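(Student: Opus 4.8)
The plan is to obtain (\ref{enu:t-star-formula}) purely from convexity of the free energy function, and then to deduce (\ref{enu:ourside-range-empty}) by combining (\ref{enu:t-star-formula}) with the upper bound of Lemma \ref{lem:upper-bound}.

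For (\ref{enu:t-star-formula}) I would start from the definition of the convex conjugate, which gives $-t^{*}(-\alpha)=-\sup_{\beta\in\R}\{-\beta\alpha-t(\beta)\}=\inf_{\beta\in\R}\{t(\beta)+\beta\alpha\}$. Set $g(\beta):=t(\beta)+\beta\alpha$. By Proposition \ref{prop:free-energy-function} the function $t$ is convex and differentiable, hence so is $g$, with non-decreasing derivative $g'(\beta)=t'(\beta)+\alpha$; moreover $g'(0)=t'(0)+\alpha=\alpha-\alpha_{0}(\psi)$, using that $t'(0)=-\int\tilde{\psi}\,d\tilde{\mu}_{0}\big/\int\tilde{\zeta}\,d\tilde{\mu}_{0}=-\alpha_{0}(\psi)$ by Proposition \ref{prop:free-energy-function} and Definition \ref{def-betaconformal-eigenmeasures}. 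If $\alpha\ge\alpha_{0}(\psi)$, then $g'(0)\ge0$, so $g$ is non-decreasing on $[0,\infty)$, whence $\inf_{\beta\ge0}g(\beta)=g(0)\ge\inf_{\beta\le0}g(\beta)$ and therefore $\inf_{\beta\in\R}g(\beta)=\inf_{\beta\le0}g(\beta)$. The case $\alpha\le\alpha_{0}(\psi)$ is symmetric and yields $\inf_{\beta\in\R}g(\beta)=\inf_{\beta\ge0}g(\beta)$. These are exactly the two asserted formulas (and they agree at $\alpha=\alpha_{0}(\psi)$, where $g'(0)=0$).

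For (\ref{enu:ourside-range-empty}) I would first record the elementary inclusion $\tilde{\mathcal{F}}(\alpha,\psi)\subset\tilde{\mathcal{F}}^{\sharp}(\alpha,\psi)$, valid for every $\alpha$: if $\lim_{n}S_{n}\tilde{\psi}(x)\big/S_{n}\tilde{\zeta}(x)=\alpha$ then both the $\limsup$ and the $\liminf$ equal $\alpha$, so the defining inequality of $\tilde{\mathcal{F}}^{\sharp}(\alpha,\psi)$ holds; applying $\pi_{\Chat}$ gives $\mathcal{F}(\alpha,\psi)\subset\mathcal{F}^{\sharp}(\alpha,\psi)$, so it suffices to show $\mathcal{F}^{\sharp}(\alpha,\psi)=\emptyset$. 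Now assume $-t^{*}(-\alpha)<0$. By (\ref{enu:t-star-formula}) there exists $\beta$ — with $\beta\le0$ when $\alpha\ge\alpha_{0}(\psi)$, and $\beta\ge0$ when $\alpha\le\alpha_{0}(\psi)$ — such that $t(\beta)+\beta\alpha<0$. If $\mathcal{F}^{\sharp}(\alpha,\psi)$ contained a point $z$, then the hypotheses on the signs of $\alpha$ and $\beta$ in Lemma \ref{lem:upper-bound} would be satisfied, and that lemma would give
\[
0\le\liminf_{r\rightarrow0}\frac{\log\nu_{\beta}\left(B\left(z,r\right)\right)}{\log r}\le t(\beta)+\beta\alpha<0,
\]
a contradiction; hence $\mathcal{F}^{\sharp}(\alpha,\psi)=\emptyset$. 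For the final clause, if $\alpha\notin-\overline{t'(\R)}$, i.e. $-\alpha\notin\overline{t'(\R)}$, then $t^{*}(-\alpha)=\infty$ by Lemma \ref{lem:conjugate} (\ref{enu:conjugate-facts-2}), so $-t^{*}(-\alpha)=-\infty<0$ and the preceding argument applies.

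I do not expect a serious obstacle; the only point needing care is that the range of $\beta$ over which the infimum in (\ref{enu:t-star-formula}) is attained must match exactly the sign conditions under which Lemma \ref{lem:upper-bound} is applicable — which is precisely why it is convenient to establish (\ref{enu:t-star-formula}) in this one-sided form first.
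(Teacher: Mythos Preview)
Your proposal is correct and follows essentially the same approach as the paper: part (\ref{enu:t-star-formula}) is deduced from convexity of $t$ together with $t'(0)=-\alpha_{0}(\psi)$, and part (\ref{enu:ourside-range-empty}) is obtained by feeding the one-sided infimum into Lemma \ref{lem:upper-bound} to derive a contradiction, with the final clause coming from Lemma \ref{lem:conjugate} (\ref{enu:conjugate-facts-2}). The only cosmetic difference is that in (\ref{enu:t-star-formula}) the paper shows $t(\beta)+\beta\alpha\ge t(0)$ for $\beta>0$ via Lemma \ref{lem:conjugate} (\ref{enu:conjugate-facts-1}), whereas you obtain the same inequality by observing that $g'(0)\ge0$ and $g'$ is non-decreasing; both arguments are equivalent uses of convexity.
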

\begin{proof}
To prove (\ref{enu:t-star-formula}), first recall that $\alpha_{0}\left(\psi\right)=-t'\left(0\right)$
by Proposition \ref{prop:free-energy-function}. Hence, we have that
$-t^{*}\left(-\alpha_{0}\left(\psi\right)\right)=-t^{*}\left(t'\left(0\right)\right)=t\left(0\right)$
by Lemma \ref{lem:conjugate} (\ref{enu:conjugate-facts-1}). Now
we only consider the case that $\alpha\ge\alpha_{0}\left(\psi\right)$.
The remaining case can be proved similarly. If $\beta>0$ then we
have that $t\left(\beta\right)+\beta\alpha\ge t\left(\beta\right)+\beta\alpha_{0}\left(\psi\right)\ge-t^{*}\left(-\alpha_{0}\left(\psi\right)\right)=t\left(0\right)$.
Hence, we have $-t^{*}\left(-\alpha\right)=\inf_{\beta\le0}\left\{ t\left(\beta\right)+\beta\alpha\right\} $. 

For the proof of (\ref{enu:ourside-range-empty}), suppose for a contradiction
that there exists $\alpha\in\R$ and $z\in\mathcal{F}^{\sharp}\left(\alpha,\psi\right)$
such that $-t^{*}\left(-\alpha\right)<0$. Again, we only consider
the case that $\alpha\ge\alpha_{0}\left(\psi\right)$. By (1) there
exists $\beta\le0$ such that $t\left(\beta\right)+\beta\alpha<0$.
This contradicts Lemma \ref{lem:upper-bound} and thus proves the
first assertion in (\ref{enu:ourside-range-empty}). Finally, if $\alpha\notin-\overline{t'\left(\R\right)}$
then we have $-t^{*}\left(-\alpha\right)=-\infty$ by Lemma \ref{lem:conjugate}
(\ref{enu:conjugate-facts-2}). Hence, we have $\mathcal{F}^{\sharp}\left(\alpha,\psi\right)=\emptyset$.
Since $\mathcal{F}\left(\alpha,\psi\right)\subset\mathcal{F}^{\sharp}\left(\alpha,\psi\right)$,
the proof is complete.
\end{proof}
For an expanding multi-map which satisfies the open set condition,\foreignlanguage{english}{
we prove the following lower bound for the Hausdorff dimension of
$\nu_{\beta}$ }by using estimates from \foreignlanguage{english}{\cite[Section 5]{MR2153926}.
Related results and similar arguments can be found in \cite[Lemma 2]{MR1435198}
for conformal repellers, and in \cite[Theorem 4.4.2]{MR2003772} for
graph directed Markov systems. Recall that, for a Borel probability
measure $\nu$ on $J\left(G\right)$, the Hausdorff dimension of $\nu$
(cf. \cite{falconerfractalgeometryMR2118797}) is given by 
\[
\dim_{H}\left(\nu\right):=\inf\left\{ \dim_{H}\left(A\right):A\subset J\left(G\right)\mbox{ is a Borel set with }\nu\left(A\right)=1\right\} .
\]
 }
\begin{prop}
\label{prop:local-dimension-lowerbound}Let $f=\left(f_{i}\right)_{i\in I}\in\left(\Rat\right)^{I}$
be expanding and let \foreignlanguage{english}{\textup{$G=\left\langle f_{i}:i\in I\right\rangle $.}}
Suppose that $f$ satisfies the open set condition. Let $\psi=\left(\psi_{i}\right)_{i\in I}$
be a H\"older family associated with $f$ and let $t:\R\rightarrow\R$
denote the free energy function for $\left(f,\psi\right)$. For each
$\beta\in\R$ we have that 
\[
\dim_{H}\left(\nu_{\beta}\right)\ge-t^{*}\left(t'\left(\beta\right)\right).
\]
In particular, we have $\dim_{H}\left(\mathcal{F}\left(\alpha,\psi\right)\right)\ge-t^{*}\left(-\alpha\right)$
for each $\alpha\in-t'\left(\R\right)$. \end{prop}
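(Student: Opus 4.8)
The plan is to reduce the statement to a pointwise lower bound on the local dimension of $\nu_{\beta}$ and then to apply the mass distribution principle. Concretely, I would show that
\[
\liminf_{r\rightarrow0}\frac{\log\nu_{\beta}\left(B\left(z,r\right)\right)}{\log r}\ge-t^{*}\left(t'\left(\beta\right)\right)\qquad\text{for }\nu_{\beta}\text{-a.e. }z\in J\left(G\right).
\]
Granting this, if $A$ denotes a Borel set with $\nu_{\beta}\left(A\right)=1$ on which the above $\liminf$ bound holds at every point, then the mass distribution principle (see e.g. \cite{falconerfractalgeometryMR2118797}) gives $\dim_{H}\left(A\cap B\right)\ge-t^{*}\left(t'\left(\beta\right)\right)$ for every Borel set $B$ with $\nu_{\beta}\left(B\right)=1$, and hence $\dim_{H}\left(\nu_{\beta}\right)\ge-t^{*}\left(t'\left(\beta\right)\right)$. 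We record for later use that, writing $\alpha:=-t'\left(\beta\right)$, Lemma \ref{lem:conjugate} (\ref{enu:conjugate-facts-1}) together with Proposition \ref{prop:free-energy-function}, the fact that $\tilde{\mu}_{\beta}$ is the equilibrium state for $\beta\tilde{\psi}+t\left(\beta\right)\tilde{\zeta}$, and $\mathcal{P}\bigl(\beta\tilde{\psi}+t\left(\beta\right)\tilde{\zeta},\tilde{f}\bigr)=0$ yield
\[
-t^{*}\left(t'\left(\beta\right)\right)=t\left(\beta\right)+\beta\alpha=-\frac{h\left(\tilde{\mu}_{\beta}\right)}{\int\tilde{\zeta}d\tilde{\mu}_{\beta}},
\]
and that $\int\tilde{\zeta}d\tilde{\mu}_{\beta}<0$ because $f$ is expanding.

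The first step is to lift the problem to the skew product. Since $\tilde{\mu}_{\beta}$ is ergodic and equivalent to $\tilde{\nu}_{\beta}$ by Lemma \ref{lem:existence-skew-product-conformalmeasure}, Birkhoff's ergodic theorem yields a Borel set $\tilde{A}\subset J\left(\tilde{f}\right)$ with $\tilde{\nu}_{\beta}\left(\tilde{A}\right)=1$ such that for every $\left(\omega,z\right)\in\tilde{A}$,
\[
\frac{1}{n}S_{n}\tilde{\zeta}\left(\omega,z\right)\longrightarrow\int\tilde{\zeta}d\tilde{\mu}_{\beta}\qquad\mbox{and}\qquad\frac{1}{n}S_{n}\bigl(\beta\tilde{\psi}+t\left(\beta\right)\tilde{\zeta}\bigr)\left(\omega,z\right)\longrightarrow-h\left(\tilde{\mu}_{\beta}\right).
\]
Passing to a Borel subset as in Remark \ref{borelsest-of-fullmeasure}, the set $A:=\pi_{\Chat}\bigl(\tilde{A}\bigr)$ satisfies $\nu_{\beta}\left(A\right)=1$, so it suffices to prove the $\liminf$ bound at an arbitrary $z\in A$, using some $\omega$ with $\left(\omega,z\right)\in\tilde{A}$.

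Fix such $z$ and $\omega$ and argue as in the proof of Lemma \ref{lem:upper-bound}: writing $z_{n}:=f_{\omega_{|n}}\left(z\right)\in J\left(G\right)$, expandingness and Proposition \ref{prop:expandingness-criteria} (\ref{enu:exp-implies-hyperbolicloxodromic}) provide a uniform radius $R>0$ and holomorphic inverse branches $\phi_{n}$ of $f_{\omega_{|n}}$ on $B\left(z_{n},R\right)$ with $\phi_{n}\left(z_{n}\right)=z$, and Koebe distortion gives a constant $c>1$, independent of $z,\omega,n$, with
\[
B\bigl(z,c^{-1}R\Vert\phi_{n}'\left(z_{n}\right)\Vert\bigr)\subset\phi_{n}\bigl(B\left(z_{n},c^{-1}R\right)\bigr)\subset B\bigl(z,cR\Vert\phi_{n}'\left(z_{n}\right)\Vert\bigr),
\]
bounded distortion of $\Vert\phi_{n}'\Vert$ on $B\left(z_{n},c^{-1}R\right)$, and $\Vert\phi_{n}'\left(z_{n}\right)\Vert=\e^{S_{n}\tilde{\zeta}\left(\omega,z\right)}$ by the chain rule. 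The crucial point --- and the place where the open set condition genuinely enters, and where the non-injectivity of the generators $f_{i}$ forces substantially more work than for contracting iterated function systems --- is the geometric estimate of \cite[Section 5]{MR2153926}: under the open set condition the preimages of the open set $U$ under the inverse branches of a given generation $n$ have uniformly bounded overlap, so that any spherical ball of radius of order $\Vert\phi_{n}'\left(z_{n}\right)\Vert$ meets only a bounded number, independent of $n$, of generation-$n$ pieces. Combining this with the $\bigl(\beta\tilde{\psi}+t\left(\beta\right)\tilde{\zeta}\bigr)$-subconformality of $\nu_{\beta}$ (Lemma \ref{lem:existence-sub-conformalmeasure-on-juliaset}), iterated $n$ times along $\omega$, with the bounded distortion and the H\"older continuity of $\beta\tilde{\psi}+t\left(\beta\right)\tilde{\zeta}$, one obtains a constant $C>1$, independent of $z,\omega,n$, such that
\[
\nu_{\beta}\Bigl(B\bigl(z,cR\,\e^{S_{n}\tilde{\zeta}\left(\omega,z\right)}\bigr)\Bigr)\le C\,\e^{S_{n}\left(\beta\tilde{\psi}+t\left(\beta\right)\tilde{\zeta}\right)\left(\omega,z\right)}.
\]
I expect this bounded-multiplicity estimate to be the main obstacle; once it is available, the rest is routine.

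Finally, I would interpolate over $r$. Set $r_{n}:=cR\,\e^{S_{n}\tilde{\zeta}\left(\omega,z\right)}$; since the spherical derivatives of the $f_{i}$ on $J\left(G\right)$ are bounded, $\inf_{n}r_{n+1}/r_{n}>0$ and $r_{n}\rightarrow0$. For $r\in\left[r_{n+1},r_{n}\right]$ and $n$ large, monotonicity of $\nu_{\beta}\left(B\left(z,\cdot\right)\right)$ and the previous display give
\[
\frac{\log\nu_{\beta}\left(B\left(z,r\right)\right)}{\log r}\ge\frac{\log C+S_{n}\bigl(\beta\tilde{\psi}+t\left(\beta\right)\tilde{\zeta}\bigr)\left(\omega,z\right)}{\log\left(cR\right)+S_{n+1}\tilde{\zeta}\left(\omega,z\right)},
\]
and as $n\rightarrow\infty$ the right-hand side converges to $-h\left(\tilde{\mu}_{\beta}\right)\big/\int\tilde{\zeta}d\tilde{\mu}_{\beta}=-t^{*}\left(t'\left(\beta\right)\right)$ by the Birkhoff convergences and $n/(n+1)\rightarrow1$. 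This establishes the $\liminf$ bound at every $z\in A$, and the mass distribution principle argument above yields $\dim_{H}\left(\nu_{\beta}\right)\ge-t^{*}\left(t'\left(\beta\right)\right)$. For the final assertion, given $\alpha\in-t'\left(\R\right)$ write $\alpha=-t'\left(\beta\right)$; then $\nu_{\beta}\bigl(\mathcal{F}\left(\alpha,\psi\right)\bigr)=1$ by Lemma \ref{lem:support-of-nu-beta}, so by Remark \ref{borelsest-of-fullmeasure} there is a Borel set $A'\subset\mathcal{F}\left(\alpha,\psi\right)$ with $\nu_{\beta}\left(A'\right)=1$, whence $\dim_{H}\bigl(\mathcal{F}\left(\alpha,\psi\right)\bigr)\ge\dim_{H}\left(A'\right)\ge\dim_{H}\left(\nu_{\beta}\right)\ge-t^{*}\left(t'\left(\beta\right)\right)=-t^{*}\left(-\alpha\right)$.
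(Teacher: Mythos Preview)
Your overall strategy is the same as the paper's, and your identification of the bounded-overlap estimate from \cite[Section 5]{MR2153926} as the crux is correct. However, the displayed inequality
\[
\nu_{\beta}\Bigl(B\bigl(z,cR\,\e^{S_{n}\tilde{\zeta}\left(\omega,z\right)}\bigr)\Bigr)\le C\,\e^{S_{n}\left(\beta\tilde{\psi}+t\left(\beta\right)\tilde{\zeta}\right)\left(\omega,z\right)}
\]
is not obtainable from the ingredients you list, for two independent reasons. First, subconformality of $\nu_{\beta}$ goes the wrong way: the inequality $\nu_{\beta}(f_{i}(B))\le\int_{B}\e^{-\varphi_{i}}d\nu_{\beta}$ yields \emph{lower} bounds for $\nu_{\beta}$ on small inverse-branch images (this is exactly how it is used in Lemma~\ref{lem:upper-bound}), not the upper bound you need here. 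Second, and more seriously, the bounded-overlap covering of $\pi_{\Chat}^{-1}(B(z,r))$ produced by \cite{MR2153926} consists of pieces $\eta^{i}(\cdot)$ indexed by \emph{different} words $\omega^{i}\in I^{l_{i}}$; while all of these have $S_{l_{i}}\tilde{\zeta}$ comparable to $\log r$, their Birkhoff sums $S_{l_{i}}\tilde{\psi}$ need not be comparable to $S_{n}\tilde{\psi}(\omega,z)$ for your fixed $\omega$. Hence even the genuine conformality of $\tilde{\nu}_{\beta}$ on the skew product only gives $\nu_{\beta}(B(z,r))\lesssim\sum_{i}\e^{S_{l_{i}}(\beta\tilde{\psi}+t(\beta)\tilde{\zeta})(\eta^{i}(\cdot))}$, which cannot in general be bounded by $\e^{S_{n}(\beta\tilde{\psi}+t(\beta)\tilde{\zeta})(\omega,z)}$.

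The paper resolves exactly this by an Egoroff step: one passes to a set $\tilde{E}(\alpha,\Delta)\subset J(\tilde{f})$ of positive $\tilde{\nu}_{\beta}$-measure on which $\beta S_{n}\tilde{\psi}/S_{n}\tilde{\zeta}\ge\beta\alpha-\Delta$ holds \emph{uniformly} for all large $n$, and estimates $\tilde{\nu}_{\beta}\bigl(\pi_{\Chat}^{-1}(B(z,r))\cap\tilde{E}(\alpha,\Delta)\bigr)$ instead of $\nu_{\beta}(B(z,r))$. Conformality of $\tilde{\nu}_{\beta}$ together with the uniform ratio control on $\tilde{E}(\alpha,\Delta)$ then bounds each covering piece by $C r^{t(\beta)+\beta\alpha-\Delta}$, regardless of which word $\omega^{i}$ it comes from. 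One then applies the local-dimension criterion to the conditioned measure $\nu_{\beta,\Delta}(A):=\tilde{\nu}_{\beta}(\pi_{\Chat}^{-1}(A)\cap\tilde{E}(\alpha,\Delta))/\tilde{\nu}_{\beta}(\tilde{E}(\alpha,\Delta))$, observes that $\dim_{H}(\nu_{\beta})\ge\dim_{H}(\nu_{\beta,\Delta})$ since $\nu_{\beta,\Delta}\ll\nu_{\beta}$, and lets $\Delta\to 0$. Your pointwise Birkhoff argument would work for a single rational map (where there is only one symbolic itinerary through each point), but in the semigroup setting the Egoroff uniformisation is what bridges the gap between the different branches meeting a given ball.
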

\begin{proof}
We  use some estimates and notations from \foreignlanguage{english}{\cite[Section 5]{MR2153926}}.
Suppose that $f$ satisfies the open set condition with open set $U\subset\Chat$.
We may assume that there exists $\epsilon>0$ such that $B(\overline{U},2\epsilon)\cap P(G)=\emptyset$.
Let $\overline{U}=\bigcup_{j=1}^{k}K_{j}$ be a measurable partition
such that $\Int(K_{j})\neq\emptyset$ and $\diam(K_{j})\le\epsilon/10$
for each $j\in\left\{ 1,\dots,k\right\} $.

Let $\beta\in\R$ and $\alpha=-t'(\beta)$. Our main task is to prove
that there exists a constant $C>0$ with the property that, for each
$\Delta>0$ there exist $r_{0}\left(\Delta\right)>0$ and a Borel
set $\tilde{E}\left(\alpha,\Delta\right)\subset J\left(\tilde{f}\right)$
with $\tilde{v}_{\beta}\left(\tilde{E}\left(\alpha,\Delta\right)\right)>0$,
such that for all $r\le r_{0}\left(\Delta\right)$ and $z\in J(G)$
we have 
\begin{equation}
\tilde{v}_{\beta}\left(\pi_{\Chat}^{-1}\left(B(z,r)\right)\cap\tilde{E}\left(\alpha,\Delta\right)\right)\le Cr^{t(\beta)+\beta\alpha-\Delta}.\label{eq:localdimension-lowerbound-1}
\end{equation}
For the Borel probability measure \foreignlanguage{english}{$\nu_{\beta,\Delta}$
on $J\left(G\right)$, given by} $\nu_{\beta,\Delta}\left(A\right):=\tilde{v}_{\beta}\left(\pi_{\Chat}^{-1}\left(A\right)\cap\tilde{E}\left(\alpha,\Delta\right)\right)/\tilde{\nu}_{\beta}\left(\tilde{E}\left(\alpha,\Delta\right)\right)$,
for $A\subset J\left(G\right)$, we then have for each $z\in J\left(G\right)$,
\[
\liminf_{r\rightarrow0}\frac{\log\nu_{\beta,\Delta}\left(B(z,r)\right)}{\log r}=\liminf_{r\rightarrow0}\frac{\log\tilde{v}_{\beta}\left(\pi_{\Chat}^{-1}\left(B(z,r)\right)\cap\tilde{E}\left(\alpha,\Delta\right)\right)}{\log r}\ge t\left(\beta\right)+\beta\alpha-\Delta.
\]
Hence, we have $\dim_{H}\left(\nu_{\beta,\Delta}\right)\ge t\left(\beta\right)+\beta\alpha-\Delta$
by \cite{MR684248}, which gives $\dim_{H}\left(\nu_{\beta}\right)\ge\dim_{H}\left(\nu_{\beta,\Delta}\right)\ge t\left(\beta\right)+\beta\alpha-\Delta$.
Letting $\Delta$ tend to zero, gives that $\dim_{H}\left(\nu_{\beta}\right)\ge t\left(\beta\right)+\beta\alpha$.
Finally, since there exists a Borel subset $A$ of $J\left(G\right)$
with $A\subset\mathcal{F}\left(\alpha,\psi\right)$ and $\nu_{\beta}\left(A\right)=1$
by Lemma \ref{lem:support-of-nu-beta} and Remark \ref{borelsest-of-fullmeasure},
we have $\dim_{H}\left(\mathcal{F}\left(\alpha,\psi\right)\right)\ge\dim_{H}\left(\nu_{\beta}\right)\ge t\left(\beta\right)+\beta\alpha$,
which finishes the proof. 

To prove (\ref{eq:localdimension-lowerbound-1}), let $z\in J(G)$
and let $\Delta>0$. By Lemma \ref{lem:support-of-nu-beta} we have
$\tilde{\nu}_{\beta}\left(\mathcal{\tilde{F}}\left(\alpha,\psi\right)\right)=1$.
By Egoroff's Theorem, there exist a Borel set $\tilde{E}\left(\alpha,\Delta\right)\subset\tilde{\mathcal{F}}\left(\alpha,\psi\right)$
with \foreignlanguage{english}{$\tilde{\nu}_{\beta}\left(\tilde{E}\left(\alpha,\Delta\right)\right)>0$
and  $N\left(\Delta\right)\in\N$ such that 
\begin{equation}
\inf_{y\in\tilde{E}\left(\alpha,\Delta\right)}\beta\frac{S_{n}\tilde{\psi}\left(y\right)}{S_{n}\tilde{\zeta}\left(y\right)}\ge\beta\alpha-\Delta,\mbox{ for all }n\ge N\left(\Delta\right).\label{eq:ergodic-ratio-uniformbound}
\end{equation}
With the notation from \cite[Lemma 5.15]{MR2153926}, we have for
each $r>0$, 
\begin{equation}
\pi_{\Chat}^{-1}\left(B(z,r)\right)\cap J\left(\tilde{f}\right)\subset\bigcup_{i=1}^{p}\eta^{i}\left(\pi_{\Chat}^{-1}\left(B\left(K_{v_{i}},\epsilon/5\right)\right)\cap J\left(\tilde{f}\right)\right),\label{eq:good-cover-sumi}
\end{equation}
where $p\in\N$, $1\le v_{i}\le k$ and $\eta^{i}\left(\kappa,y\right)=\bigl(\omega^{i}\kappa,\gamma_{1}^{i}\dots\gamma_{l_{i}}^{i}\left(y\right)\bigr)$,
$\kappa\in I^{\N}$, $y\in B\left(K_{v_{i}},\epsilon/5\right)$, for
each $1\le i\le p$. Here, we have $\omega^{i}\in I^{l_{i}}$, $l_{i}\in\N$
and $\gamma_{1}^{i}\dots\gamma_{l_{i}}^{i}$ is an inverse branch
of $f_{\omega^{i}}$ in a neighborhood of $B\left(K_{v_{i}},\epsilon/5\right)$
and $\gamma_{j}^{i}$ is an inverse branch of $f_{\omega_{j}^{i}}$,
for every $1\le i\le p$. It is important to note that $p\le C_{4}$,
for some constant $C_{4}$ independent of $r$ and $z$ by \cite[(12)]{MR2153926}.
Moreover, by \cite[(13)]{MR2153926}, we have that 
\begin{equation}
\big\Vert(\gamma_{1}^{i}\circ\dots\circ\gamma_{l_{i}}^{i})'\left(y\right)\big\Vert\le C_{5}r,\quad y\in B(K_{v_{i}},\epsilon/5),\label{eq:derivative-bound}
\end{equation}
with some constant $C_{5}$ independent of $r$ and $y$. In particular,
we have that $l_{i}$ tends to infinity as $r$ tends to zero. Hence,
there exists $r_{0}\left(\Delta\right)>0$ such that, for each $z$
and $r\le r_{0}\left(\Delta\right)$, we have $l_{i}\ge N\left(\Delta\right)$
for $1\le i\le p$ in (\ref{eq:good-cover-sumi}). Then we obtain
by (\ref{eq:good-cover-sumi}) and }$\beta\tilde{\psi}+t\left(\beta\right)\tilde{\zeta}$-conformal\foreignlanguage{english}{ity
of }$\tilde{v}_{\beta}$ \foreignlanguage{english}{that 
\begin{align}
\tilde{v}_{\beta}\left(\pi_{\Chat}^{-1}\left(B(z,r)\right)\cap\tilde{E}\left(\alpha,\Delta\right)\right) & \le\sum_{i=1}^{p}\tilde{v}_{\beta}\left(\eta^{i}\left(\pi_{\Chat}^{-1}\left(B\left(K_{v_{i}},\epsilon/5\right)\right)\right)\cap\tilde{E}\left(\alpha,\Delta\right)\right)\nonumber \\
 & =\sum_{i=1}^{p}\int_{\pi_{\Chat}^{-1}\left(B\left(K_{v_{i}},\epsilon/5\right)\right)}\e^{S_{l_{i}}\left(\beta\tilde{\psi}+t\left(\beta\right)\tilde{\zeta}\right)\left(\eta^{i}\right)}\1_{\tilde{E}\left(\alpha,\Delta\right)}\left(\eta^{i}\right)d\tilde{\nu}_{\beta}.\label{eq:conformality-estimate}
\end{align}
The estimate in (\ref{eq:ergodic-ratio-uniformbound}) gives that
\begin{equation}
\1_{\tilde{E}\left(\alpha,\Delta\right)}S_{l_{i}}\left(t\left(\beta\right)\tilde{\zeta}+\beta\tilde{\psi}\right)=\1_{\tilde{E}\left(\alpha,\Delta\right)}S_{l_{i}}\tilde{\zeta}\Bigl(t(\beta)+\beta S_{l_{i}}\tilde{\psi}\big/S_{l_{i}}\tilde{\zeta}\Bigr)\le\1_{\tilde{E}\left(\alpha,\Delta\right)}S_{l_{i}}\tilde{\zeta}\left(t(\beta)+\beta\alpha-\Delta\right).\label{eq:good-indicator}
\end{equation}
Combining the estimates in (\ref{eq:derivative-bound}), (\ref{eq:conformality-estimate})
and (\ref{eq:good-indicator}), we obtain 
\[
\tilde{v}_{\beta}\left(\pi_{\Chat}^{-1}\left(B(z,r)\right)\cap\tilde{E}\left(\alpha,\Delta\right)\right)\le C_{4}(C_{5}r)^{t(\beta)+\beta\alpha-\Delta},
\]
which completes  the proof.}
\end{proof}
\selectlanguage{english}%
We can now state the main result of this section, which establishes
the multifractal formalism for H\"older families associated with
expanding rational semigroups.
\selectlanguage{british}%
\begin{thm}
\label{thm:multifractalformalism}Let $f=\left(f_{i}\right)_{i\in I}\in\left(\Rat\right)^{I}$
be expanding. Let $\psi=\left(\psi_{i}\right)_{i\in I}$ be a H\"older
family associated with $f$ and let $t:\R\rightarrow\R$ denote the
free energy function for $\left(f,\psi\right)$. Suppose there exists
$\gamma\in\R$ such that $\mathcal{P}\bigl(\gamma\tilde{\zeta},\tilde{f}\bigr)=0$.
Let $\alpha_{\pm}:=\alpha_{\pm}\left(\psi\right)$ and $\alpha_{0}:=\alpha_{0}\left(\psi\right)$.
Then we have the following.
\begin{enumerate}
\item \label{enu:range-of-spectrum}If $\alpha_{-}=\alpha_{+}$ then we
have that $\alpha_{-}=\alpha_{0}=\alpha_{+}$, $-t'\left(\R\right)=\left\{ \alpha_{0}\right\} $
and $\mathcal{F}\left(\alpha,\psi\right)$ is non-empty if and only
if $\alpha=\alpha_{0}$. If $\alpha_{-}<\alpha_{+}$ then we have
that $-t'\left(\R\right)=\left(\alpha_{-},\alpha_{+}\right)$, each
$\mathcal{F}\left(\alpha,\psi\right)$ is non-empty for $\alpha\in\left(\alpha_{-},\alpha_{+}\right)$,
$s(\alpha ):=-t^{*}\left(-\alpha \right)$ is a strictly concave
real analytic positive function on $\left(\alpha_{-},\alpha_{+}\right)$
with maximum value $\delta=-t^{*}\left(-\alpha_{0}\right)>0$, and 
$s''<0$ on $(\alpha _{-},\alpha _{+}).$ 
\item \label{enu:upperbound-of-spectrum}For each $\alpha\in\R$ we have
that 
\[
\dim_{H}\left(\mathcal{F}\left(\alpha,\psi\right)\right)\le\dim_{H}\left(\mathcal{F}^{\sharp}\left(\alpha,\psi\right)\right)\le\max\left\{ -t^{*}\left(-\alpha\right),0\right\} .
\]

\item \label{enu:lowerbound}If $f$ satisfies the open set condition, then
for each $\alpha\in-t'\left(\R\right)$ we have that 
\[
\dim_{H}\left(\mathcal{F}\left(\alpha,\psi\right)\right)=\dim_{H}\left(\mathcal{F}^{\sharp}\left(\alpha,\psi\right)\right)=-t^{*}\left(-\alpha\right)>0.
\]
In particular, we have $\dim_{H}\left(\mathcal{F}\left(\alpha_{0},\psi\right)\right)=\delta>0$. 
\end{enumerate}
\end{thm}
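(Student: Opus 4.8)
The plan is to assemble the theorem from the results already in place, since its analytic and measure-theoretic core has been isolated in the preceding lemmas. I would begin with part (\ref{enu:upperbound-of-spectrum}). Fix $\alpha\in\R$. Directly from the definitions one has $\mathcal{F}\left(\alpha,\psi\right)\subset\mathcal{F}^{\sharp}\left(\alpha,\psi\right)$, so it suffices to bound $\dim_{H}\left(\mathcal{F}^{\sharp}\left(\alpha,\psi\right)\right)$. If $-t^{*}\left(-\alpha\right)<0$ then $\mathcal{F}^{\sharp}\left(\alpha,\psi\right)=\emptyset$ by Lemma \ref{lem:t-star-properties} (\ref{enu:ourside-range-empty}) and the bound is trivial. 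Otherwise assume $\alpha\ge\alpha_{0}$, the case $\alpha\le\alpha_{0}$ being entirely symmetric. For every $\beta\le0$, Lemma \ref{lem:upper-bound} gives $\liminf_{r\rightarrow0}\log\nu_{\beta}\left(B\left(z,r\right)\right)/\log r\le t\left(\beta\right)+\beta\alpha$ for every $z\in\mathcal{F}^{\sharp}\left(\alpha,\psi\right)$; since $\nu_{\beta}$ is a finite Borel measure and $\mathcal{F}^{\sharp}\left(\alpha,\psi\right)$ is a Suslin set, hence $\nu_{\beta}$-measurable, the standard upper estimate of Hausdorff dimension in terms of local dimensions (cf.\ \cite{MR684248}) yields $\dim_{H}\left(\mathcal{F}^{\sharp}\left(\alpha,\psi\right)\right)\le t\left(\beta\right)+\beta\alpha$. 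Taking the infimum over $\beta\le0$ and applying Lemma \ref{lem:t-star-properties} (\ref{enu:t-star-formula}) gives $\dim_{H}\left(\mathcal{F}^{\sharp}\left(\alpha,\psi\right)\right)\le-t^{*}\left(-\alpha\right)$, which together with $\mathcal{F}\left(\alpha,\psi\right)\subset\mathcal{F}^{\sharp}\left(\alpha,\psi\right)$ proves part (\ref{enu:upperbound-of-spectrum}).

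For part (\ref{enu:lowerbound}) I would combine this with the reverse inequality. Under the open set condition, Proposition \ref{prop:local-dimension-lowerbound} gives $\dim_{H}\left(\mathcal{F}\left(\alpha,\psi\right)\right)\ge-t^{*}\left(-\alpha\right)$ for each $\alpha\in-t'\left(\R\right)$, while Lemma \ref{lem:legendre-transform-convex-conjugate} shows $-t^{*}\left(-\alpha\right)>0$ on $-t'\left(\R\right)$, with value $\delta$ at $\alpha=\alpha_{0}=-t'\left(0\right)$. Since part (\ref{enu:upperbound-of-spectrum}) furnishes $\dim_{H}\left(\mathcal{F}\left(\alpha,\psi\right)\right)\le\dim_{H}\left(\mathcal{F}^{\sharp}\left(\alpha,\psi\right)\right)\le-t^{*}\left(-\alpha\right)$ on this range, all four quantities coincide, which is part (\ref{enu:lowerbound}).

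It remains to establish part (\ref{enu:range-of-spectrum}). The observation used throughout is the chain of inclusions $-t'\left(\R\right)\subset\left\{ \alpha\in\R:\mathcal{F}\left(\alpha,\psi\right)\neq\emptyset\right\} \subset\overline{-t'\left(\R\right)}$, whose left inclusion is the final assertion of Lemma \ref{lem:support-of-nu-beta} and whose right inclusion is the last sentence of Lemma \ref{lem:t-star-properties} (\ref{enu:ourside-range-empty}). Since $\alpha_{0}=-t'\left(0\right)$ always lies in $-t'\left(\R\right)$ (Proposition \ref{prop:free-energy-function}), this identifies $\alpha_{\pm}$ with the endpoints of the interval $-t'\left(\R\right)$ and gives $\alpha_{-}\le\alpha_{0}\le\alpha_{+}$. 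By Proposition \ref{prop:characterisation-strict-convexity} there are exactly two cases. If $t'$ is constant on $\R$, then $-t'\left(\R\right)=\left\{ \alpha_{0}\right\} $, so $\alpha_{-}=\alpha_{0}=\alpha_{+}$, and the chain above forces $\mathcal{F}\left(\alpha,\psi\right)=\emptyset$ for every $\alpha\neq\alpha_{0}$ while $\mathcal{F}\left(\alpha_{0},\psi\right)\neq\emptyset$. If instead $t''>0$ on $\R$, then $t'$ is strictly increasing, so $-t'\left(\R\right)$ is a nonempty open interval, which by the chain above equals $\left(\alpha_{-},\alpha_{+}\right)$ with $\alpha_{-}<\alpha_{+}$; every $\mathcal{F}\left(\alpha,\psi\right)$ with $\alpha\in\left(\alpha_{-},\alpha_{+}\right)$ is non-empty, and the asserted real analyticity, strict concavity, positivity, maximum value $\delta=-t^{*}\left(-\alpha_{0}\right)$, and the inequality $s''<0$ for $s\left(\alpha\right):=-t^{*}\left(-\alpha\right)$ on $\left(\alpha_{-},\alpha_{+}\right)$ are precisely the conclusion of Lemma \ref{lem:legendre-transform-convex-conjugate} in the case $\tilde{\mu}_{0}\neq\tilde{\mu}_{\gamma}$, which holds here by Proposition \ref{prop:characterisation-strict-convexity}. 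Since $\alpha_{-}=\alpha_{+}$ forces the first case and $\alpha_{-}<\alpha_{+}$ the second, this yields both halves of part (\ref{enu:range-of-spectrum}).

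Essentially every step is bookkeeping; the genuine work has already been carried out in Lemma \ref{lem:upper-bound}, Lemma \ref{lem:legendre-transform-convex-conjugate}, and Proposition \ref{prop:local-dimension-lowerbound}, and the subtlest mathematical input — the rigidity dichotomy behind the degenerate case $\alpha_{-}=\alpha_{+}$ — is supplied by Proposition \ref{prop:characterisation-strict-convexity}. The points that still need attention, none of them a real obstacle, are: matching the sign constraint on $\beta$ in Lemma \ref{lem:upper-bound} with the one-sided infimum of Lemma \ref{lem:t-star-properties} (\ref{enu:t-star-formula}) according to whether $\alpha\ge\alpha_{0}$ or $\alpha\le\alpha_{0}$; checking that the mass-distribution upper bound for Hausdorff dimension applies to the Suslin, hence universally measurable, sets $\mathcal{F}^{\sharp}\left(\alpha,\psi\right)$ equipped with the finite measures $\nu_{\beta}$; and confirming that the set $\left\{ \alpha:\mathcal{F}\left(\alpha,\psi\right)\neq\emptyset\right\} $, whose membership at the endpoints of $-t'\left(\R\right)$ is not part of the claim, nevertheless has infimum $\alpha_{-}$ and supremum $\alpha_{+}$ equal to those endpoints, so that $-t'\left(\R\right)=\left(\alpha_{-},\alpha_{+}\right)$ holds as stated.
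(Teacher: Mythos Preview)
Your proposal is correct and follows essentially the same approach as the paper's proof: the same case split via Proposition \ref{prop:characterisation-strict-convexity} for part (\ref{enu:range-of-spectrum}), the same use of Lemma \ref{lem:upper-bound} together with Lemma \ref{lem:t-star-properties} (\ref{enu:t-star-formula}) and a mass-distribution upper bound for part (\ref{enu:upperbound-of-spectrum}), and the same appeal to Proposition \ref{prop:local-dimension-lowerbound} and Lemma \ref{lem:legendre-transform-convex-conjugate} for part (\ref{enu:lowerbound}). The only cosmetic differences are the order in which you treat the three parts and your citation of \cite{MR684248} rather than \cite[Proposition 4.9(b)]{falconerfractalgeometryMR2118797} for the local-dimension upper bound; note that the latter result applies to arbitrary subsets, so your aside about Suslin measurability of $\mathcal{F}^{\sharp}\left(\alpha,\psi\right)$, while true, is not actually needed.
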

\begin{proof}
We start with the proof of (\ref{enu:range-of-spectrum}). We distinguish
two cases. First suppose that $\tilde{\mu}_{0}=\tilde{\mu}_{\gamma}$.
Then we have that $-t'\left(\R\right)=\left\{ -t'\left(0\right)\right\} =\left\{ \alpha_{0}\right\} $
by Propositions \ref{prop:characterisation-strict-convexity} 
and \ref{prop:free-energy-function}. By Lemma \ref{lem:support-of-nu-beta}
we have that $\mathcal{F}\left(\alpha_{0},\psi\right)\neq\emptyset$.
By Lemma \ref{lem:t-star-properties} (\ref{enu:ourside-range-empty}),
we have that $\mathcal{F}\left(\alpha,\psi\right)=\emptyset$ if $\alpha\neq\alpha_{0}$.
Hence, we have that $\alpha_{-}=\alpha_{0}=\alpha_{+}$. Now suppose
that $\tilde{\mu}_{0}\neq\tilde{\mu}_{\gamma}$. Then $t''>0$ on $\R $ 
by Proposition \ref{prop:characterisation-strict-convexity}
 and we have $\mathcal{F}\left(\alpha,\psi\right)\neq\emptyset$
for $\alpha\in-t'\left(\R\right)$ by Lemma \ref{lem:support-of-nu-beta}.
Combining with Lemma \ref{lem:t-star-properties} (\ref{enu:ourside-range-empty}),
we obtain that $-t'\left(\R\right)=\left(\alpha_{-},\alpha_{+}\right)$.
That $s(\alpha ):= -t^{*}\left(-\alpha \right)$ is a strictly concave
real analytic positive function on $\left(\alpha_{-},\alpha_{+}\right)$
with maximum value $\delta=-t^{*}\left(-\alpha_{0}\right)>0$ and $s''<0$ on 
$(\alpha _{-},\alpha _{+})$ follows
from Lemma \ref{lem:legendre-transform-convex-conjugate}. 

To prove (\ref{enu:upperbound-of-spectrum}), let $\alpha\in\R$ and
suppose that $\alpha\ge\alpha_{0}$. The case $\alpha<\alpha_{0}$
can be proved similarly. Since the upper bound in (\ref{enu:upperbound-of-spectrum})
clearly holds if $\mathcal{F}^{\sharp}\left(\alpha,\psi\right)=\emptyset$
we may assume that $\mathcal{F}^{\sharp}\left(\alpha,\psi\right)\neq\emptyset$.
Hence, we have $-t^{*}\left(-\alpha\right)\ge0$ by Lemma \ref{lem:t-star-properties}
(\ref{enu:ourside-range-empty}). Let $z\in\mathcal{F}^{\sharp}\left(\alpha,\psi\right)$.
By Lemma \ref{lem:upper-bound} we have for each $\beta\le0$, 
\begin{equation}
0\le\liminf_{r\rightarrow0}\frac{\log\nu_{\beta}\left(B\left(z,r\right)\right)}{\log r}\le t\left(\beta\right)+\beta\alpha.\label{eq:upperbound-proof-1}
\end{equation}
Since $\inf_{\beta\le0}\left\{ t\left(\beta\right)+\beta\alpha\right\} =-t^{*}\left(-\alpha\right)$
by Lemma \ref{lem:t-star-properties} (\ref{enu:t-star-formula}),
it follows from (\ref{eq:upperbound-proof-1}) and \cite[Proposition 4.9 (b)]{falconerfractalgeometryMR2118797}
and its proof that we have $\dim_{H}\left(\mathcal{F}^{\sharp}\left(\alpha,\psi\right)\right)\le-t^{*}\left(-\alpha\right)$.
Since $\mathcal{F}\left(\alpha,\psi\right)\subset\mathcal{F}^{\sharp}\left(\alpha,\psi\right)$,
the proof of (\ref{enu:upperbound-of-spectrum}) is complete.

To prove (\ref{enu:lowerbound}), suppose that $f$ satisfies the
open set condition and let $\alpha=-t'\left(\beta\right)$ for some
$\beta\in\R$. By Proposition \ref{prop:local-dimension-lowerbound}
we have $\dim_{H}\left(\mathcal{F}\left(\alpha,\psi\right)\right)\ge-t^{*}\left(-\alpha\right)$.
By Lemma \ref{lem:legendre-transform-convex-conjugate} we have $-t^{*}\left(-\alpha\right)>0$.
Combining these estimates with the upper bound in (\ref{enu:upperbound-of-spectrum})
completes the proof of the theorem. 
\end{proof}
The next lemma shows that, for a Bernoulli family $\psi$, a trivial
multifractal spectrum occurs in a very special situation. For a compact
metric space $X$, we denote by $C\left(X\right)$ the space of all
complex-valued continuous functions endowed with the supremum norm.
\begin{prop}
\label{lem:degenerate-via-zdunik}Let $f=\left(f_{i}\right)_{i\in I}\in\left(\Rat\right)^{I}$
be expanding and let \foreignlanguage{english}{\textup{$G=\left\langle f_{i}:i\in I\right\rangle $.
}}Suppose that $\deg\left(f_{i_{0}}\right)\ge2$ for some $i_{0}\in I$.
Let $\left(c_{i}\right)_{i\in I}$ be a family of negative numbers
and let $\psi=\left(\psi_{i}:f_{i}^{-1}\left(J\left(G\right)\right)\rightarrow\R\right)_{i\in I}$
be given by $\psi_{i}\left(z\right)=c_{i}$ for each $i\in I$ and
$z\in f_{i}^{-1}\left(J\left(G\right)\right)$. Let $t:\R\rightarrow\R$
denote the free energy function for $\left(f,\psi\right)$. Let $\gamma$
be the unique number such that $\mathcal{P}\left(\gamma\tilde{\psi},\tilde{f}\right)=0$.
Then we have $\alpha_{-}\left(\psi\right)=\alpha_{+}\left(\psi\right)$
if and only if there exist an automorphism $\varphi\in\Aut\bigl(\Chat\bigr)$,
complex numbers $\left(a_{i}\right)_{i\in I}$ and $\lambda\in\R$
such that for all $i\in I$, 
\begin{equation}
\varphi\circ f_{i}\circ\varphi^{-1}\left(z\right)=a_{i}z^{\pm\deg\left(f_{i}\right)}\quad\mbox{and}\quad\log\deg\left(f_{i}\right)=\lambda c_{i}.\label{eq:degenerate-conditions}
\end{equation}
Moreover, if the assertions in (\ref{eq:degenerate-conditions}) hold,
then we have $\lambda=-\left(\gamma/\delta\right)$. \end{prop}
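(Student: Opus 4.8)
The plan is to reduce the assertion, by means of the thermodynamic machinery already assembled, to a statement about coboundaries on $J(\tilde f)$, and then to feed in Zdunik's rigidity theorem for single rational maps together with some bookkeeping of critical points under composition. First note that $\deg(f_{i_{0}})\ge 2$ together with Lemma \ref{lem:moebiussemigroup-expanding-implies-J-not-twoelements} forces $\card(J(G))\ge 3$, so all the structural facts (perfectness of $J(G)$, $E(G)\subset F(G)$, etc.) are available. By Theorem \ref{thm:multifractalformalism} (\ref{enu:range-of-spectrum}) we have $\alpha_{-}(\psi)=\alpha_{+}(\psi)$ if and only if $t$ is affine, and by Proposition \ref{prop:characterisation-strict-convexity} the latter is equivalent to the existence of a continuous $v\colon J(\tilde f)\to\R$ with $\delta\tilde\zeta-\gamma\tilde\psi=v-v\circ\tilde f$. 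Thus it suffices to show that this cohomological equation is equivalent to (\ref{eq:degenerate-conditions}) with $\lambda=-\gamma/\delta$. Throughout we use that conjugating $f$ by some $\varphi\in\Aut(\Chat)$ changes $\tilde\zeta$ only by a continuous coboundary (a conformal change of the spherical metric), hence leaves $t$, $\gamma$, $\delta$, $\alpha_{\pm}$, and expandingness unchanged.

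For the ``if'' direction (which also yields the ``moreover''), assume (\ref{eq:degenerate-conditions}). Since each $c_{i}<0$ and $\deg(f_{i_{0}})\ge 2$, the relation $\log\deg(f_{i})=\lambda c_{i}$ forces $\lambda<0$ and $\deg(f_{i})\ge 2$ for all $i$. After conjugation we may assume $f_{i}(z)=a_{i}z^{\pm\deg(f_{i})}$, so $\{0,\infty\}=\CV(f_{i})\subset P(G)\subset F(G)$ by hyperbolicity (Proposition \ref{prop:expandingness-criteria} (\ref{enu:exp-implies-hyperbolicloxodromic})) and hence $J(G)$ is a compact subset of $\C\setminus\{0\}$. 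In logarithmic coordinates on the cylinder $\C/2\pi i\Z$ each $f_{i}$ becomes an affine map whose derivative has modulus $\deg(f_{i})$, and a further conformal change of metric gives $\tilde\zeta=-\log\deg(f_{\omega_{1}})+\phi-\phi\circ\tilde f$ for some continuous $\phi$. Writing the transfer operator for $\tilde f$ in these coordinates, $\mathcal{P}(\delta\tilde\zeta,\tilde f)=0$ and $\mathcal{P}(\gamma\tilde\psi,\tilde f)=0$ become $\sum_{i}\deg(f_{i})^{1-\delta}=1=\sum_{i}\deg(f_{i})\e^{\gamma c_{i}}$; substituting $\deg(f_{i})=\e^{\lambda c_{i}}$ and using that $s\mapsto\sum_{i}\e^{s c_{i}}$ is strictly decreasing yields $\lambda(1-\delta)=\lambda+\gamma$, i.e. $\lambda=-\gamma/\delta$. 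Then $-\delta\log\deg(f_{\omega_{1}})-\gamma c_{\omega_{1}}=-(\delta\lambda+\gamma)c_{\omega_{1}}=0$, so $\delta\tilde\zeta-\gamma\tilde\psi=\delta(\phi-\phi\circ\tilde f)$ is a continuous coboundary and $\alpha_{-}=\alpha_{+}$.

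For the ``only if'' direction, assume $\delta\tilde\zeta-\gamma\tilde\psi=v-v\circ\tilde f$ for a continuous $v$. For a periodic $\omega\in I^{\N}$ of period $n$ put $g:=f_{\omega_{|n}}\in G$; since $\tilde f$ is expanding, Lemma \ref{lem:exp-implies-compactfibres} gives $\{\omega\}\times J(g)\subset J(\tilde f)$, and iterating the cohomological equation $n$ times along this $\tilde f^{n}$-invariant fibre yields $-\delta\log\Vert g'(z)\Vert-\gamma C=w(z)-w(g(z))$ on $J(g)$, where $C:=c_{\omega_{1}}+\dots+c_{\omega_{n}}$ and $w:=v(\omega,\cdot)$ is continuous; in particular the geometric potential of $g$ is cohomologous to a constant on $J(g)$. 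If $\deg(g)\ge 2$, then $g$ is hyperbolic of degree $\ge 2$ (being an element of the hyperbolic semigroup $G$, so $P(g)\subset P(G)\subset F(G)\subset F(g)$), and Zdunik's theorem (\cite{MR1032883})---together with the classification of rational maps with parabolic orbifold and the fact that Chebyshev and Latt\`es maps are not hyperbolic---shows that $g$ is M\"obius-conjugate to $z\mapsto b z^{\pm\deg(g)}$. Consequently $\mathcal{P}(-\delta\log\Vert g'\Vert,g)=(1-\delta)\log\deg(g)$; taking pressures in the displayed equation gives $\mathcal{P}(-\delta\log\Vert g'\Vert,g)=\gamma C+\log\deg(g)$, so $\log\deg(g)=-(\gamma/\delta)C$ for every periodic word with $\deg(f_{\omega_{|n}})\ge 2$.

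Applying this to $\omega=i_{0}^{\infty}$ and to $\omega=(i_{0}j)^{\infty}$ for a hypothetical $j$ with $\deg(f_{j})=1$ would give $\log\deg(f_{i_{0}})=-(\gamma/\delta)c_{i_{0}}=-(\gamma/\delta)(c_{i_{0}}+c_{j})$, forcing the contradiction $c_{j}=0$; hence $\deg(f_{i})\ge 2$ for every $i$, so each $f_{i}$ is M\"obius-conjugate to a power map, has exactly two exceptional points, and satisfies $\mathrm{Crit}(f_{i})=E(f_{i})$. To get one conjugating map for all $i$: for $i\ne j$ the map $g:=f_{j}\circ f_{i}$ is power-like with exactly two critical points, and $\mathrm{Crit}(g)=\mathrm{Crit}(f_{i})\cup f_{i}^{-1}(\mathrm{Crit}(f_{j}))$; hence $f_{i}^{-1}(E(f_{j}))=f_{i}^{-1}(\mathrm{Crit}(f_{j}))$ has exactly two points, which forces both points of $E(f_{j})$ to be totally ramified values of $f_{i}$, and since the totally ramified values of the power map $f_{i}$ are precisely its two exceptional points we get $E(f_{j})=E(f_{i})$. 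Conjugating by a M\"obius map that carries this common pair to $\{0,\infty\}$, each $f_{i}$ becomes a degree-$\deg(f_{i})$ rational map whose only critical points are $0$ and $\infty$ and which permutes $\{0,\infty\}$, which forces $f_{i}(z)=a_{i}z^{\pm\deg(f_{i})}$; finally the $n=1$ case of the identity above, now valid for all $i$, gives $\log\deg(f_{i})=-(\gamma/\delta)c_{i}$, completing (\ref{eq:degenerate-conditions}) with $\lambda=-\gamma/\delta$. I expect the main obstacle to be exactly this ``only if'' part---extracting from a single cohomological identity the simultaneous power-map normal form for all generators---which rests essentially on Zdunik's rigidity theorem and on the careful bookkeeping of critical points and exceptional sets under composition.
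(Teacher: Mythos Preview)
Your proof is correct and follows the same overarching strategy as the paper: reduce the equality $\alpha_{-}=\alpha_{+}$ via Theorem \ref{thm:multifractalformalism} and Proposition \ref{prop:characterisation-strict-convexity} to the cohomological equation $\delta\tilde\zeta-\gamma\tilde\psi=v-v\circ\tilde f$, restrict this to fibres over periodic sequences, and invoke Zdunik's rigidity theorem on the resulting single rational maps.

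The execution differs in two places. For the ``if'' direction the paper checks the identity $-S_{n}\tilde\zeta(\bar\xi,z)=\lambda S_{n}\tilde\psi(\bar\xi,z)$ on periodic points and appeals to the Liv\v{s}ic theorem to produce the coboundary, whereas you construct it explicitly by passing to logarithmic coordinates on the cylinder and then read off $\lambda=-\gamma/\delta$ from the two pressure equations $\sum_{i}\deg(f_{i})^{1-\delta}=1=\sum_{i}\deg(f_{i})\e^{\gamma c_{i}}$. For the ``only if'' direction, after Zdunik shows that each $f_{\omega_{|n}}$ is \emph{individually} conjugate to a power map, the paper defers to \cite[Proof of Theorem 3.1]{su09} for the passage to a \emph{single} conjugating M\"obius map $\varphi$ and for the conclusion $\deg(f_{i})\ge 2$ for all $i$; you instead give a self-contained argument, first ruling out degree-one generators via the identity $\log\deg(f_{\omega_{|n}})=-(\gamma/\delta)\sum_{k}c_{\omega_{k}}$ applied to $i_{0}^{\infty}$ and $(i_{0}j)^{\infty}$, and then forcing $E(f_{i})=E(f_{j})$ by counting critical points of $f_{j}\circ f_{i}$ (which is itself power-like by Zdunik applied to $(ij)^{\infty}$). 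Both tactics are sound; yours is more self-contained, while the paper's Liv\v{s}ic route in the ``if'' direction is a little cleaner and avoids the explicit pressure computation. One minor quibble: $\card(J(G))\ge 3$ follows immediately from $J(f_{i_{0}})\subset J(G)$ being perfect, so the reference to Lemma \ref{lem:moebiussemigroup-expanding-implies-J-not-twoelements} is unnecessary there.
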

\begin{proof}
First note that we have $\alpha_{-}\left(\psi\right)=\alpha_{+}\left(\psi\right)$
if and only if $\tilde{\mu}_{0}=\tilde{\mu}_{\gamma}$ by Theorem
\ref{thm:multifractalformalism} (\ref{enu:range-of-spectrum}) and
Proposition \ref{prop:characterisation-strict-convexity}. Now suppose
that $\alpha_{-}\left(\psi\right)=\alpha_{+}\left(\psi\right)$. 
Then $\tilde{\mu }_{0}=\tilde{\mu }_{\gamma }.$  By  Proposition \ref{prop:characterisation-strict-convexity} there exists a continuous function $v:J(\tilde{f})\rightarrow \R $ such that
$\delta \tilde{\zeta }=\gamma \tilde{\psi }+v-v\circ \tilde{f}.$

For each $n\in\N$, $\xi\in I^{n}$ and $u\in\R$, we denote by $p\left(u,\xi\right)$
the topological pressure of the potential $u\log\Vert f_{\xi}'\Vert:J_{\overline{\xi}}\rightarrow\R$
with respect to $f_{\xi}$, where $\overline{\xi}:=\left(\xi_{1},\dots,\xi_{n},\xi_{1},\dots,\xi_{n},\dots\right)\in I^{\N}$.
Note that $J_{\overline{\xi}}=J\left(f_{\xi}\right)$. Our next aim
is to show that the function $u\mapsto p\left(u,\xi\right)$, $u\in\R$,
is constant. By \cite[Theorem 5.6.5]{MR2656475} we have that, for
each $u\in\R$ there exists an $f_{\xi}$-invariant Borel probability
measure $m$ on $J_{\overline{\xi}}$ such that 
\begin{equation}
\frac{\partial}{\partial u}p\left(u,\xi\right)=\int_{J_{\overline{\xi}}}\log\Vert f_{\xi}'\Vert dm.\label{eq:degenerate-criterion-1}
\end{equation}
Denote by $\tilde{m}$ the Borel probability measure supported on
$J^{\overline{\xi}}$ which is given by $\tilde{m}\left(\bigl\{\overline{\xi}\bigr\}\times A\right):=m\left(A\right)$,
for each Borel set $A\subset J_{\overline{\xi}}$. Then $\tilde{m}$
is $\tilde{f}^{n}\big|_{J^{\overline{\xi}}}$-invariant. From this
and (\ref{eq:cohomologous-potentials}) we deduce that 
\begin{equation}
\frac{\partial}{\partial u}p\left(u,\xi\right)=-\int_{J^{\overline{\xi}}}S_{n}\tilde{\zeta}d\tilde{m}=-\left(\gamma/\delta\right)\int_{J^{\overline{\xi}}}S_{n}\tilde{\psi}d\tilde{m}=-\left(\gamma/\delta\right)\sum_{i=1}^{n}c_{\xi_{i}}.\label{eq:degenerate-criterion-2}
\end{equation}
Hence, the function $u\mapsto p\left(u,\xi\right)$, $u\in\R$, is
constant. Now, similarly as in \cite[Proof of Theorem 3.1]{su09},
using Zdunik's theorem (\cite{MR1032883}), we obtain that there exist
an automorphism $\varphi\in\Aut\bigl(\Chat\bigr)$ and complex numbers
$\left(a_{i}\right)_{i\in I}$ such that 
\[
\varphi\circ f_{i}\circ\varphi^{-1}\left(z\right)=a_{i}z^{\pm\deg\left(f_{i}\right)},\quad z\in\Chat.
\]
Since $\deg\left(f_{i_{0}}\right)\ge2$ and $f$ is expanding, it
follows that $\deg\left(f_{i}\right)\ge2$ for all $i\in I$. Moreover,
by combining (\ref{eq:degenerate-criterion-1}) and (\ref{eq:degenerate-criterion-2}),
we have that $\log\deg\left(f_{i}\right)=-\left(\gamma/\delta\right)c_{i}$
for each $i\in I$. 

To prove the converse implication, suppose that there exist an automorphism
$\varphi\in\Aut\bigl(\Chat\bigr)$, complex numbers $\left(a_{i}\right)_{i\in I}$
and $\lambda\in\R$, such that $\varphi\circ f_{i}\circ\varphi^{-1}\left(z\right)=a_{i}z^{\pm\deg\left(f_{i}\right)}$
and $\log\deg\left(f_{i}\right)=\lambda c_{i}$, for each $i\in I$.
It follows that, for each $n\in\N$, $\xi\in I^{n}$ and $z\in J_{\overline{\xi}}$
 such that $f_{\xi}\left(z\right)=z$, we have that 
\[
\log\Vert f_{\xi}'\left(z\right)\Vert=\sum_{i=1}^{n}\log\deg\left(f_{\xi_{i}}\right)=\lambda\sum_{i=1}^{n}c_{\xi_{i}}.
\]
Hence, we have $-S_{n}\tilde{\zeta}\bigl(\overline{\xi},z\bigr)=\lambda S_{n}\tilde{\psi}\bigl(\overline{\xi},z\bigr)$.
Since $\tilde{f}:J\left(\tilde{f}\right)\rightarrow J\left(\tilde{f}\right)$
is an open, distance expanding, topologically transitive map (see
e.g. Lemma \ref{lem:ratsemi-facts-basic} (\ref{enu:preimages-dense})
for the transitivity), it follows from a Livsic type theorem that
there exists a continuous function $\tilde{h}:J\left(\tilde{f}\right)\rightarrow\R$
such that $-\tilde{\zeta}=\lambda\tilde{\psi}+\tilde{h}-\tilde{h}\circ\tilde{f}$
(see e.g. \cite[Proposition 4.4.5]{MR2656475}). In particular, we
have $-\delta\tilde{\zeta}=\delta\lambda\tilde{\psi}+\delta\left(\tilde{h}-\tilde{h}\circ\tilde{f}\right)$,
which shows that $\lambda=-\left(\gamma/\delta\right)$. Thus, the
potentials $\delta\tilde{\zeta}$ and $\gamma\tilde{\psi}$ have the
same equilibrium state, which means  that $\tilde{\mu}_{0}=\tilde{\mu}_{\gamma}$.
The proof is complete.
\end{proof}

\section{\label{sec:Application-to-Random}Application to Random Complex Dynamics}

The first lemma relates the H\"older exponent of a function to $\M_{*}$
(see Definition \ref{def:QRH}).
\begin{lem}
\label{lem:hoelderexponent-via-Q}Let $U\subset\Chat$ be an open
set and let $\rho:U\rightarrow\C$ be a bounded function.\foreignlanguage{english}{\textup{
Then we have for each $z\in U$, 
\[
\Hol\left(\rho,z\right)=\M_{*}\left(\rho,z\right).
\]
}}\end{lem}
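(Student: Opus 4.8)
The plan is to establish $\Hol(\rho,z) = \M_*(\rho,z)$ by elementary estimates with logarithms, after reformulating $\Hol(\rho,z)$ in terms of the function $r \mapsto \M(\rho,z,r)$. Fix $z \in U$. Since $U$ is open, $B(z,r) \subset U$ for all small $r > 0$, so $\M(\rho,z,r)$ is a well-defined real number (finite because $\rho$ is bounded), and it is nondecreasing in $r$. If $\M(\rho,z,r_{0}) = 0$ for some $r_{0} > 0$, then $\rho$ is constant on $B(z,r_{0})$, so $\Hol(\rho,z) = \infty$ and $\M(\rho,z,r) = 0$ for all $r \le r_{0}$, whence $\M_*(\rho,z) = \infty$ as well; the identity holds. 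So we may assume $\M(\rho,z,r) > 0$ for every $r > 0$, and $\log\M(\rho,z,r)$ is a real number for small $r$.

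First I would verify that for each $\beta \ge 0$ the condition $\limsup_{y\to z,\,y\neq z} |\rho(y)-\rho(z)|/d(y,z)^{\beta} < \infty$ is equivalent to the existence of $\delta, C > 0$ with $\M(\rho,z,r) \le C r^{\beta}$ for all $r \in (0,\delta)$. The implication ``$\Leftarrow$'' holds because any $y\neq z$ lies in $B(z,d(y,z)+\epsilon)$ for every $\epsilon>0$, so $|\rho(y)-\rho(z)| \le C(d(y,z)+\epsilon)^{\beta}$, and letting $\epsilon\to 0$ gives $|\rho(y)-\rho(z)| \le C\, d(y,z)^{\beta}$ whenever $0<d(y,z)<\delta$; the implication ``$\Rightarrow$'' follows by taking the supremum over $y \in B(z,r)$ with $d(y,z)<r$. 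Consequently
\[
\Hol(\rho,z) = \sup\bigl\{ \beta \ge 0 : \text{ there exist } \delta, C > 0 \text{ with } \M(\rho,z,r) \le C r^{\beta} \text{ for all } r \in (0,\delta) \bigr\} =: \sup S,
\]
and $0 \in S$ because $\M(\rho,z,\cdot)$ is bounded, so $\sup S \ge 0$.

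It then remains to prove $\sup S = \M_*(\rho,z)$. For ``$\ge$'' (i.e. $\M_*(\rho,z)\ge\sup S$): given $\beta \in S$ and $r \in (0,\min\{\delta,1\})$, from $\log\M(\rho,z,r) \le \log C + \beta\log r$ and $\log r < 0$ we get $\log\M(\rho,z,r)/\log r \ge \beta + \log C/\log r$, and since $\log C/\log r \to 0$ as $r \to 0$ this yields $\M_*(\rho,z) = \liminf_{r\to 0}\log\M(\rho,z,r)/\log r \ge \beta$; taking the supremum over $\beta \in S$ gives $\M_*(\rho,z) \ge \sup S$. For ``$\le$'': fix any $\beta \ge 0$ with $\beta < \M_*(\rho,z)$ (if there is none, i.e. $\M_*(\rho,z)=0$, then already $\sup S \ge 0 = \M_*(\rho,z)$); by definition of the liminf there is $\delta \in (0,1)$ with $\log\M(\rho,z,r)/\log r > \beta$ for all $r \in (0,\delta)$, and multiplying by $\log r < 0$ gives $\M(\rho,z,r) < r^{\beta}$ on $(0,\delta)$, so $\beta \in S$; letting $\beta \uparrow \M_*(\rho,z)$ gives $\sup S \ge \M_*(\rho,z)$ (this also covers $\M_*(\rho,z)=\infty$). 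Combining, $\Hol(\rho,z) = \sup S = \M_*(\rho,z)$. The only delicate points are the reversal of inequalities when dividing by the negative quantity $\log r$ and the bookkeeping for the degenerate constant case and the endpoints $0$ and $\infty$; there is no substantial obstacle.
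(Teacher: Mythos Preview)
Your proof is correct and follows essentially the same approach as the paper: both arguments reduce to elementary manipulations with logarithms and the sign of $\log r$. Your version is organized slightly differently---you first reformulate $\Hol(\rho,z)$ as the supremum of exponents $\beta$ for which $\M(\rho,z,r)\le Cr^{\beta}$ and then compare this with $\M_*(\rho,z)$, whereas the paper works directly with quotients $\log|\rho(y)-\rho(z)|/\log d(y,z)$ for individual points $y$---and you handle the degenerate locally constant case and the endpoints $0$ and $\infty$ more explicitly, but the substance is the same.
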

\begin{proof}
Let $\beta>\Hol\left(\rho,z\right)$. Then we have $\limsup_{y\rightarrow z,y\neq z}\left|\rho\left(y\right)-\rho\left(z\right)\right|\big/d\left(y,z\right)^{\beta}=\infty$,
which implies that \linebreak  $\limsup_{y\rightarrow z,y\neq z}\log\left|\rho\left(y\right)-\rho\left(z\right)\right|-\beta\log d\left(y,z\right)=\infty$.
Hence, we have that 
\[
\limsup_{y\rightarrow z,y\neq z}\left(-\log d\left(y,z\right)\right)\left(\frac{\log\left|\rho\left(y\right)-\rho\left(z\right)\right|}{-\log d\left(y,z\right)}+\beta\right)=\infty.
\]
Since $\lim_{y\rightarrow z,y\neq z}\left(-\log d\left(y,z\right)\right)=\infty$,
we conclude that $\limsup_{y\rightarrow z,y\neq z}\log\left|\rho\left(y\right)-\rho\left(z\right)\right|\big/\left(-\log d\left(y,z\right)\right)+\beta\ge0$,
which implies $\liminf_{y\rightarrow z,y\neq z}\log\left|\rho\left(y\right)-\rho\left(z\right)\right|\big/\log d\left(y,z\right)\le\beta$.
We have thus shown that $\M_{*}\left(\rho,z\right)\le\Hol\left(\rho,z\right)$. 

Let $\beta<\Hol\left(\rho,z\right)$. Then we have $\lim_{y\rightarrow z,y\neq z}\left|\rho\left(y\right)-\rho\left(z\right)\right|\big/d\left(y,z\right)^{\beta}=0$,
which implies that \linebreak  $\lim_{y\rightarrow z,y\neq z}\log\left|\rho\left(y\right)-\rho\left(z\right)\right|-\beta\log d\left(y,z\right)=-\infty$.
Hence, we have that 
\[
\lim_{y\rightarrow z,y\neq z}\left(-\log d\left(y,z\right)\right)\left(\frac{\log\left|\rho\left(y\right)-\rho\left(z\right)\right|}{-\log d\left(y,z\right)}+\beta\right)=-\infty.
\]
We conclude that $\limsup_{y\rightarrow z,y\neq z}\log\left|\rho\left(y\right)-\rho\left(z\right)\right|\big/\left(-\log d\left(y,z\right)\right)+\beta\le0$,
which then implies that \linebreak  $\liminf_{y\rightarrow z,y\neq z}\log\left|\rho\left(y\right)-\rho\left(z\right)\right|\big/\log d\left(y,z\right)\ge\beta$.
We have thus shown that $\M_{*}\left(\rho,z\right)\ge\Hol\left(\rho,z\right)$
and the proof of the lemma is complete.
\end{proof}
The following lemma allows us to investigate the H\"older exponent
of a non-constant unitary eigenfunction of $M_{\tau}$ by means of
ergodic sums with respect to the skew product associated with a rational
semigroup. 
\begin{lem}
\label{lem:Q-via-ergodicsums}Let $f=\left(f_{i}\right)_{i\in I}\in\left(\Rat\right)^{I}$
be expanding and let \foreignlanguage{english}{\textup{$G=\left\langle f_{i}:i\in I\right\rangle $.}}
Suppose that $f$ satisfies the separation condition. Let $\left(p_{i}\right)_{i\in I}\in\left(0,1\right)^{I}$
be a probability vector, let $\tau:=\sum_{i\in I}p_{i}\delta_{f_{i}}$
and let $\rho\in C\bigl(\Chat\bigr)$ be a non-constant function belonging
to $U_{\tau}$. Let $\psi=\left(\psi_{i}:f_{i}^{-1}\left(J\left(G\right)\right)\rightarrow\R\right)_{i\in I}$
be given by $\psi_{i}\left(z\right):=\log p_{i}$ for each $i\in I$.
Then \foreignlanguage{english}{for each $\left(\omega,z\right)\in J\left(\tilde{f}\right)$
we have that 
\[
\liminf_{n\rightarrow\infty}\frac{S_{n}\tilde{\psi}\left(\left(\omega,z\right)\right)}{S_{n}\tilde{\zeta}\left(\left(\omega,z\right)\right)}=\M_{*}\left(\rho,z\right)\quad\mbox{and}\quad\limsup\frac{S_{n}\tilde{\psi}\left(\left(\omega,z\right)\right)}{S_{n}\tilde{\zeta}\left(\left(\omega,z\right)\right)}=\M^{*}\left(\rho,z\right).
\]
}\end{lem}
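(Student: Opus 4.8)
The plan is to derive, from the spectral structure of $\rho$ together with the separation condition, an \emph{exact} self-reproduction formula for $\rho(y)-\rho(z)$ on small balls $B(z,r)$, and then to read off both identities by matching scales. For the algebraic input, write $\rho=\sum_k\lambda_k\rho_k$ with $M_\tau\rho_k=u_k\rho_k$, $|u_k|=1$, and for $n\in\N$ set $\rho^{(n)}:=\sum_k\lambda_k u_k^{-n}\rho_k\in U_\tau$, so $M_\tau^n\rho^{(n)}=\rho$. Since $|u_k^{-n}|=1$, $\sup_n\|\rho^{(n)}\|_\infty\le\sum_k|\lambda_k|\,\|\rho_k\|_\infty=:M_0<\infty$, and since $M_\tau$ fixes constants with $\|M_\tau\|\le1$ on $C(\Chat)$, $\inf_c\|\rho^{(n)}-c\|_\infty\ge\inf_c\|\rho-c\|_\infty>0$ for all $n$. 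As $\dim_\C U_\tau<\infty$ under the separation condition (\cite[Theorem 3.15]{s11random}), the closure $\mathcal C$ of $\{\rho^{(n)}:n\in\N\}$ is a compact subset of $U_\tau$ consisting of non-constant functions; by \cite{s11random} each $\rho'\in\mathcal C$ is locally constant on $F(G)$ and non-constant on $B(w,R)\cap J(G)$ for every $w\in J(G)$, $R>0$. Hence, by compactness and lower semicontinuity of $(\rho',w)\mapsto\osc\bigl(\rho',B(w,R_1)\cap J(G)\bigr)$, there is a fixed $R_1>0$ with $\kappa_0:=\inf_{\rho'\in\mathcal C,\,w\in J(G)}\osc\bigl(\rho',B(w,R_1)\cap J(G)\bigr)>0$.

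For the geometry, fix $(\omega,z)\in J(\tilde f)$; by Lemma~\ref{lem:exp-implies-compactfibres} $z\in J_\omega$, and $z_n:=f_{\omega_{|n}}(z)\in J(G)$. Put $\rho_n:=\Vert(f_{\omega_{|n}})'(z)\Vert^{-1}$. As in the proof of Lemma~\ref{lem:upper-bound} (using that $G$ is hyperbolic, so $f_{\omega_{|n}}$ has no critical values near $z_n$) there is a uniform $R_0>0$ and a univalent branch $\phi_n\colon B(z_n,R_0)\to\Chat$ of $f_{\omega_{|n}}^{-1}$ with $\phi_n(z_n)=z$; by Koebe's distortion theorem $\Omega_n:=\phi_n(B(z_n,R_0))$ satisfies $B(z,c_0\rho_n)\subset\Omega_n\subset B(z,C_0\rho_n)$, $f_{\omega_{|n}}$ is univalent on $\Omega_n$, and it has bounded distortion there. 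The separation condition gives that the $X_i:=f_i^{-1}(J(G))\subset J(G)$ are pairwise disjoint compact sets, hence clopen in $J(G)$ and mutually $\delta$-separated for some $\delta>0$. The key step is: there is $c_*\in(0,c_0]$ such that for every $n$, every $r\le c_*\rho_n$ and every $v\in I^n$ with $v\ne\omega_{|n}$, one has $f_v(B(z,r))\subset F(G)$. Indeed, if $j$ is the first index with $v_j\ne\omega_j$, bounded distortion makes $\diam\bigl(f_{\omega_{|j-1}}(B(z,r))\bigr)<\delta$ (this forces the choice of $c_*$, using that $\Vert(f_{(\omega_j,\dots,\omega_n)})'(z_{j-1})\Vert$ is bounded below by expansion), and $f_{\omega_{|j-1}}(B(z,r))\ni z_{j-1}=f_{\omega_{|j-1}}(z)\in X_{\omega_j}$, so $f_{\omega_{|j-1}}(B(z,r))\cap J(G)\subset X_{\omega_j}$, whence $f_{\omega_{|j-1}}(B(z,r))\subset X_{\omega_j}\cup F(G)$; now $f_{v_j}$ maps $X_{\omega_j}$ into $F(G)$ (as $X_{\omega_j}\cap X_{v_j}=\emptyset$) and $F(G)$ into $F(G)$, and the remaining factors preserve $F(G)$. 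Since $B(z,r)$ is connected, $f_v(B(z,r))$ lies in one Fatou component, on which $\rho^{(n)}$ is constant. Substituting into $\rho=M_\tau^n\rho^{(n)}=\sum_{v\in I^n}p_v\,\rho^{(n)}\!\circ f_v$ and subtracting, all $v\ne\omega_{|n}$ terms cancel, so for all $r\le c_*\rho_n$ and $y\in B(z,r)$,
\[
\rho(y)-\rho(z)=p_{\omega_{|n}}\bigl(\rho^{(n)}(f_{\omega_{|n}}(y))-\rho^{(n)}(z_n)\bigr),\qquad p_{\omega_{|n}}:=\textstyle\prod_{l=1}^n p_{\omega_l},
\]
hence $\M(\rho,z,r)=p_{\omega_{|n}}\sup_{y\in B(z,r)}\bigl|\rho^{(n)}(f_{\omega_{|n}}(y))-\rho^{(n)}(z_n)\bigr|$.

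The supremum above is $\le 2M_0$; and, after choosing $R_1$ compatibly with $c_*$, for $r$ in the range $[c_*\rho_n/\Lambda,\,c_*\rho_n]$ with $\Lambda:=\max_i\sup\Vert f_i'\Vert$, Koebe's theorem gives $f_{\omega_{|n}}(B(z,r))\supset B(z_n,R_1)$, so the supremum is $\ge\tfrac12\osc\bigl(\rho^{(n)},B(z_n,R_1)\cap J(G)\bigr)\ge\kappa_0/2$. Letting $n(r)$ be the largest $n$ with $c_*\rho_n\ge r$ (so $r\asymp\rho_{n(r)}$ and $n(r)\to\infty$ as $r\to0$), we get $\tfrac{\kappa_0}{2}p_{\omega_{|n(r)}}\le\M(\rho,z,r)\le 2M_0\,p_{\omega_{|n(r)}}$ for all small $r$. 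Since $\log p_{\omega_{|n}}=S_n\tilde\psi(\omega,z)$, $\log\rho_n=-\log\Vert(f_{\omega_{|n}})'(z)\Vert=S_n\tilde\zeta(\omega,z)$, $\log r=S_{n(r)}\tilde\zeta(\omega,z)+O(1)$, and $0<p_{\min}\le p_i\le p_{\max}<1$ together with expansion give $|S_n\tilde\psi(\omega,z)|\asymp n\asymp|S_n\tilde\zeta(\omega,z)|\to\infty$, the $O(1)$ corrections are negligible in the quotient; as $n(r)$ runs over all large integers this yields $\M_*(\rho,z)=\liminf_n S_n\tilde\psi(\omega,z)/S_n\tilde\zeta(\omega,z)$ and $\M^*(\rho,z)=\limsup_n S_n\tilde\psi(\omega,z)/S_n\tilde\zeta(\omega,z)$, as required.

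The main obstacle is the choice of scale $c_*\rho_n$ in the key step: one must ensure, with only bounded distortion available, that \emph{every} non-principal branch $f_v$ ($v\ne\omega_{|n}$) pushes the \emph{whole} ball $B(z,r)$ into $F(G)$. This is exactly where the separation condition (and not merely the open set condition) is used, and where the non-injectivity of the generators requires the care noted in the paper; once the exact formula is in hand, everything else is routine Koebe-distortion bookkeeping together with the soft compactness input $\kappa_0>0$.
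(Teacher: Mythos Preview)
Your proof is correct and follows essentially the same architecture as the paper's: derive an exact self-reproduction formula for $\rho(y)-\rho(z)$ on small balls by using the separation condition to kill all non-principal branches, then match scales via Koebe distortion and use a compactness argument to bound the oscillation of the image from below.

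The one genuine difference is how the general eigenfunction is handled. The paper invokes \cite[Theorem 3.15 (10)]{s11random} to reduce at the outset to the case $M_\tau\rho=\rho$, so a single function $\rho$ is used throughout and the uniform lower bound on oscillation follows from compactness of $J(G)$ alone. You instead keep $\rho$ arbitrary in $U_\tau$, introduce the rotated family $\rho^{(n)}=\sum_k\lambda_k u_k^{-n}\rho_k$ with $M_\tau^n\rho^{(n)}=\rho$, and extract the uniform oscillation bound $\kappa_0>0$ from compactness of $\{\rho^{(n)}\}$ in the finite-dimensional space $U_\tau$ together with the fact that limits remain non-constant (via $\|M_\tau\|\le1$). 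This buys you a more self-contained argument that does not rely on the black-box reduction, at the price of a slightly more elaborate compactness step. The remaining differences---you work directly with $B(z,r)$ and the index $n(r)$ while the paper works with the pulled-back balls $B_n=\phi_n(B(z_n,r_0))$ and a shift by a fixed $k$---are cosmetic.

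One point worth making explicit in your write-up: in the ``key step'' you need $\diam\bigl(f_{\omega_{|j-1}}(B(z,r))\bigr)<\delta$ for \emph{every} $1\le j\le n$, which requires a uniform positive lower bound on $\Vert f_{(\omega_j,\dots,\omega_n)}'(z_{j-1})\Vert$ over all $j,n$. For $n-j$ large this is the expanding property; for $n-j$ bounded it follows from hyperbolicity (no critical points near $J(G)$) and compactness. You allude to this, but since $c_*$ must absorb the worst of these finitely many initial factors, it is the point where the argument is most delicate and deserves a sentence.
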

\begin{proof}
We proceed similarly as in the proof of \foreignlanguage{english}{\cite[Lemma 5.48]{s11random}.
By \cite[Theorem 3.15 (10)]{s11random} we may assume that $M_{\tau}\left(\rho\right)=\rho$.
Since $f$ satisfies the separation condition, }we conclude that there
exists $r_{0}>0$ such that, for all $i,j\in I$ with $i\neq j$ and
$y\in f_{i}^{-1}\left(J\left(G\right)\right)$, we have $f_{j}\left(B\left(y,r_{0}\right)\right)\subset F\left(G\right)$. 

\selectlanguage{english}%
Let $\left(\omega,z\right)\in J\left(\tilde{f}\right)$. \foreignlanguage{british}{Since
$f$ is expanding, we have that $G$ is hyperbolic and there exists
a non-empty $G$-forward invariant compact subset of $F\left(G\right)$
by Proposition \ref{prop:expandingness-criteria} (\ref{enu:exp-implies-hyperbolicloxodromic}).
Hence, there exists $R>0$ such that, for each $n\in\N$, }there exists
a holomorphic branch $\phi_{n}:B\bigl(f_{\omega_{|n}}\left(z\right),R\bigr)\rightarrow\Chat$
of $f_{\omega_{|n}}^{-1}$ such that $f_{\omega_{|n}}\left(\phi_{n}\left(y\right)\right)=y$
for $y\in B\bigl(f_{\omega_{|n}}\left(z\right),R\bigr)$ and $\phi_{n}\bigl(f_{\omega_{|n}}\left(z\right)\bigr)=z$.
After making $r_{0}$ sufficiently small, we may assume that, for
the sets $B_{n}$, which are for $n\in\N$ given by 
\[
B_{n}:=\phi_{n}\bigl(B\bigl(f_{\omega_{|n}}\left(z\right),r_{0}\bigr)\bigr),
\]
we have that $\diam\bigl(f_{\omega_{|k}}\left(B_{n}\right)\bigr)\le r_{0}$
for all $1\le k\le n$. Combining this with our assumption that $M_{\tau}\left(\rho\right)=\rho$
and that $\rho$ is constant on each connected component of $F\left(G\right)$
by \cite[Theorem 3.15 (1)]{s11random}, we obtain that for all $a,b\in B_{n}$,
\begin{equation}
\left|\rho\left(a\right)-\rho\left(b\right)\right|=p_{\omega_{1}}\cdot\dots\cdot p_{\omega_{n}}\left|\rho\bigl(f_{\omega_{|n}}\left(a\right)\bigr)-\rho\bigl(f_{\omega_{|n}}\left(b\right)\bigr)\right|.\label{eq:functional-equation}
\end{equation}
We set $r_{n}:=\big\Vert f_{\omega_{|n}}'\left(z\right)\big\Vert^{-1}$
for each $n\in\N$. W\foreignlanguage{british}{e may assume that $\left(r_{n}\right)_{n\in\N}$
is strictly decreasing} because $f$ is\foreignlanguage{british}{
expanding. Hence, for each $r>0$ sufficiently small, there exists
a unique $n\in\N$ such that $r_{n+1}\le r\le r_{n}$.} To prove the
lemma, our main task is to verify that there exists a constant $C>0$,
such that for all $r>0$ sufficiently small, 
\begin{equation}
C^{-1}\le\frac{\sup\left\{ \left|\rho\left(z\right)-\rho\left(y\right)\right|:y\in B\left(z,r\right)\right\} }{p_{\omega_{1}}\cdot\dots\cdot p_{\omega_{n}}}\le C.\label{eq:limit-function-versus-ergodicsums}
\end{equation}
To prove \foreignlanguage{british}{(\ref{eq:limit-function-versus-ergodicsums})},
we first observe that by Koebe's distortion theorem, there exist positive
constants $c_{1},c_{2}$ such that for each $n\in\N$, 
\[
B\left(z,c_{1}r_{n}\right)\subset B_{n}\subset B\left(z,c_{2}r_{n}\right).
\]
Moreover, it is shown in \cite{s11random} that 
\[
J\left(G\right)=\left\{ y\in\Chat:\forall\epsilon>0:\rho_{|B\left(y,\epsilon\right)}\mbox{ is not constant}\right\} .
\]
Since $J\left(G\right)$ is compact, we see that there exists $C_{1}>0$
such that for all $y\in J\left(G\right)$, 
\begin{equation}
\sup\left\{ \left|\rho\left(y\right)-\rho\left(y'\right)\right|:y'\in B\left(y,r_{0}\right)\right\} \ge C_{1}.\label{eq:non-constant-estimate}
\end{equation}
Further, there exists $k\in\N$ such that $c_{2}r_{n+k}\le r_{n+1}$
and $r_{n}\le c_{1}r_{n-k}$, for all $n$ sufficiently large. Consequently,
for each $r>0$ we have 
\[
B\left(z,r\right)\supset B\left(z,r_{n+1}\right)\supset B\left(z,c_{2}r_{n+k}\right)\supset B_{n+k}\quad\mbox{and}\quad B\left(z,r\right)\subset B\left(z,r_{n}\right)\subset B\left(z,c_{1}r_{n-k}\right)\subset B_{n-k}.
\]
Combining with (\ref{eq:functional-equation}) and (\ref{eq:non-constant-estimate})
we obtain that 
\begin{align*}
\sup\left\{ \left|\rho\left(z\right)-\rho\left(y\right)\right|:y\in B\left(z,r\right)\right\}  & \ge\sup\left\{ \left|\rho\left(z\right)-\rho\left(y\right)\right|:y\in B_{n+k}\right\} \\
 & =\sup\left\{ p_{\omega_{1}}\cdot\dots\cdot p_{\omega_{n+k}}\left|\rho\bigl(f_{\omega_{|n+k}}\left(z\right)\bigr)-\rho\bigl(f_{\omega_{|n+k}}\left(y\right)\bigr)\right|:y\in B_{n+k}\right\} \\
 & \ge p_{\omega_{1}}\cdot\dots\cdot p_{\omega_{n+k}}C_{1}.
\end{align*}
Similarly, we have 
\begin{align*}
\sup\left\{ \left|\rho\left(z\right)-\rho\left(y\right)\right|:y\in B\left(z,r\right)\right\}  & \le\sup\left\{ \left|\rho\left(z\right)-\rho\left(y\right)\right|:y\in B_{n-k}\right\} \\
 & =\sup\left\{ p_{\omega_{1}}\cdot\dots\cdot p_{\omega_{n-k}}\left|\rho\bigl(f_{\omega_{|n-k}}\left(z\right)\bigr)-\rho\bigl(f_{\omega_{|n-k}}\left(y\right)\bigr)\right|:y\in B_{n-k}\right\} \\
 & \le p_{\omega_{1}}\cdot\dots\cdot p_{\omega_{n-k}}2\max_{v\in C\left(\Chat\right)}\left|\rho\left(v\right)\right|.
\end{align*}
We have thus proved \foreignlanguage{british}{(\ref{eq:limit-function-versus-ergodicsums}). }

\selectlanguage{british}%
By (\ref{eq:limit-function-versus-ergodicsums}) and $r_{n+1}\le r\le r_{n}$
we obtain that for each $r>0$, 
\[
\frac{S_{n}\tilde{\psi}\left(\left(\omega,z\right)\right)+\log C}{S_{n+1}\tilde{\zeta}\left(\left(\omega,z\right)\right)}\le\frac{\log\sup\left\{ \left|\rho\left(z\right)-\rho\left(y\right)\right|:y\in B\left(z,r\right)\right\} }{\log r}\le\frac{S_{n}\tilde{\psi}\left(\left(\omega,z\right)\right)-\log C}{S_{n}\tilde{\zeta}\left(\left(\omega,z\right)\right)}.
\]
The lemma follows by letting $r$ tend to zero.
\end{proof}
We are now in the position to state the main result of this section.
\begin{thm}
\label{thm:mf-for-hoelderexponent}Let $f=\left(f_{i}\right)_{i\in I}\in\left(\Rat\right)^{I}$
be expanding and let \foreignlanguage{english}{\textup{$G=\left\langle f_{i}:i\in I\right\rangle $.}}
Suppose that $f$ satisfies the separation condition. Let $\left(p_{i}\right)_{i\in I}\in\left(0,1\right)^{I}$
be a probability vector, let $\tau:=\sum_{i\in I}p_{i}\delta_{f_{i}}$
and suppose that there exists a non-constant function belonging to
$U_{\tau}$. Let $\rho\in C(\Chat)$ be a non-constant function belonging
to $U_{\tau}$. Let $\psi=\left(\psi_{i}:f_{i}^{-1}\left(J\left(G\right)\right)\rightarrow\R\right)_{i\in I}$
be given by $\psi_{i}\left(z\right):=\log p_{i}$ and let $t:\R\rightarrow\R$
denote the free energy function for $\left(f,\psi\right)$. Let $\gamma$
be the unique number such that $\mathcal{P}\left(\gamma\tilde{\psi},\tilde{f}\right)=0$.
Then we have the following.
\begin{enumerate}
\item There exists a number $a\in\left(0,1\right)$ such that $\rho:\Chat\rightarrow\C$
is $a$-H\"older continuous and $a\le\alpha_{-}\left(\psi\right)$.
\selectlanguage{english}%
\item \textup{\emph{We have $\alpha_{+}(\psi )=\sup\left\{ \alpha\in\R:H\left(\rho,\alpha\right)\neq\emptyset\right\} $
and $\alpha_{-}(\psi )=\inf\left\{ \alpha\in\R:H\left(\rho,\alpha\right)\neq\emptyset\right\} $.
Moreover, $H$ can be replaced by $R_{*},R$ or $R^{*}$. }}
\selectlanguage{british}%
\item Let $\alpha_{\pm}:=\alpha_{\pm}\left(\psi\right)$ and $\alpha_{0}:=\alpha_{0}\left(\psi\right)$.
If $\alpha_{-}<\alpha_{+}$ then we have for each $\alpha\in\left(\alpha_{-},\alpha_{+}\right)$,
\begin{align*}
\dim_{H}\left(\mathcal{F}\left(\alpha,\psi\right)\right) & =\dim_{H}\left(\mathcal{F}^{\sharp}\left(\alpha,\psi\right)\right)=\dim_{H}\left(R^{*}\left(\rho,\alpha\right)\right)=\dim_{H}\left(R_{*}\left(\rho,\alpha\right)\right)\\
 & =\dim_{H}\left(R\left(\rho,\alpha\right)\right)=\dim_{H}\left(H\left(\rho,\alpha\right)\right)=-t^{*}\left(-\alpha\right)>0.
\end{align*}
Moreover, $s(\alpha ):=-t^{*}\left(-\alpha \right)$ is a real analytic
strictly concave positive function on $\left(\alpha_{-},\alpha_{+}\right)$
with maximum value $\delta=-t^{*}\left(-\alpha_{0}\right)>0$.  
Also, $s''<0$ on $(\alpha _{-}, \alpha _{+}).$ 
\item 

\begin{enumerate}
\item For each $i\in I$ we have $\deg\left(f_{i}\right)\ge2$. Moreover,
we have $\alpha_{-}=\alpha_{+}$ if and only if there exist an automorphism
$\varphi\in\Aut\bigl(\Chat\bigr)$ and $\left(a_{i}\right)\in\C^{I}$
such that 
\[
\varphi\circ f_{i}\circ\varphi^{-1}\left(z\right)=a_{i}z^{\pm\deg\left(f_{i}\right)}\quad\mbox{and}\quad\log\deg\left(f_{i}\right)=-\left(\gamma/\delta\right)\log p_{i}.
\]

\item If $\alpha_{-}=\alpha_{+}$ then we have 
\[
\mathcal{F}\left(\alpha_{0},\psi\right) =\mathcal{F}^{\sharp}\left(\alpha_{0},\psi\right)=R^{*}\left(\rho,\alpha_{0}\right)=R_{*}\left(\rho,\alpha_{0}\right)=
 R\left(\rho,\alpha_{0}\right)=H\left(\rho,\alpha_{0}\right)=J(G),
\]
where $\dim_H(J(G))=\delta >0$,  and for all $\alpha\neq\alpha_{0}$  we have 
\[
\mathcal{F}\left(\alpha,\psi\right)=\mathcal{F}^{\sharp}\left(\alpha,\psi\right)=R^{*}\left(\rho,\alpha\right)=R_{*}\left(\rho,\alpha\right)=R\left(\rho,\alpha\right)=H\left(\rho,\alpha\right)=\emptyset.
\]

\end{enumerate}
\end{enumerate}
\end{thm}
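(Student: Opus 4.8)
The plan is to reduce Theorem~\ref{thm:mf-for-hoelderexponent} to the Birkhoff-ratio multifractal analysis of Theorem~\ref{thm:multifractalformalism} together with the rigidity statement Proposition~\ref{lem:degenerate-via-zdunik}, the bridge being the two identities $\Hol(\rho,z)=\M_{*}(\rho,z)$ (Lemma~\ref{lem:hoelderexponent-via-Q}) and, on $J(\tilde f)$, $\liminf_{n}S_{n}\tilde\psi/S_{n}\tilde\zeta=\M_{*}(\rho,\cdot)$, $\limsup_{n}S_{n}\tilde\psi/S_{n}\tilde\zeta=\M^{*}(\rho,\cdot)$ (Lemma~\ref{lem:Q-via-ergodicsums}). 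First I would clear away preliminaries: $\psi_{i}\equiv\log p_{i}$ is (trivially) H\"older, so $\psi$ is a H\"older family and $\gamma$ is given; the separation condition implies the open set condition (possibly after passing to an iterate, which does not affect the free energy function or the conformal measures $\nu_{\beta}$), so every conclusion of Theorem~\ref{thm:multifractalformalism} is available. Also $\card(J(G))\ge3$: otherwise $\card(J(G))=1$ by Lemma~\ref{lem:moebiussemigroup-expanding-implies-J-not-twoelements}, which would force $\rho$ to be constant on the connected set $F(G)=\Chat\setminus J(G)$ and hence on $\Chat$, contradicting that $\rho$ is non-constant; consequently $\pi_{\Chat}(J(\tilde f))=J(G)$ by Proposition~\ref{prop:basic-facts}(\ref{enu:basic-facts-3}).

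Next I would establish the dictionary of level sets. If $z\in F(G)$ then $\rho$ is constant near $z$, so $\M_{*}(\rho,z)=\M^{*}(\rho,z)=+\infty$; hence for $\alpha\in\R$ the sets $R_{*}(\rho,\alpha),R(\rho,\alpha),R^{*}(\rho,\alpha),H(\rho,\alpha)$ all lie in $J(G)$. For $z\in J(G)$ pick $\omega$ with $(\omega,z)\in J(\tilde f)$; by Lemma~\ref{lem:Q-via-ergodicsums}, $\liminf_{n}S_{n}\tilde\psi(\omega,z)/S_{n}\tilde\zeta(\omega,z)=\M_{*}(\rho,z)$ and $\limsup_{n}=\M^{*}(\rho,z)$, independently of $\omega$. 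Since $\M(\rho,z)$ exists and equals $\alpha$ exactly when these two coincide and equal $\alpha$, i.e.\ exactly when $(\omega,z)\in\tilde{\mathcal F}(\alpha,\psi)$, we get $R(\rho,\alpha)=\mathcal F(\alpha,\psi)$; and $H(\rho,\alpha)=R_{*}(\rho,\alpha)$ by Lemma~\ref{lem:hoelderexponent-via-Q}. Splitting into the cases $\alpha\ge\alpha_{0}$ and $\alpha<\alpha_{0}$ and using $\liminf\le\limsup$ shows $R_{*}(\rho,\alpha),R^{*}(\rho,\alpha)\subseteq\mathcal F^{\sharp}(\alpha,\psi)$. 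Thus for every $\alpha\in\R$,
\[
\mathcal F(\alpha,\psi)=R(\rho,\alpha)\subseteq R_{*}(\rho,\alpha)=H(\rho,\alpha)\subseteq\mathcal F^{\sharp}(\alpha,\psi),\qquad \mathcal F(\alpha,\psi)\subseteq R^{*}(\rho,\alpha)\subseteq\mathcal F^{\sharp}(\alpha,\psi).
\]

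Parts (1)--(3) then follow mechanically. For (1): by \cite[Theorem 3.15]{s11random} there is $a\in(0,1)$ with $\rho$ being $a$-H\"older, so $\M_{*}(\rho,z)=\Hol(\rho,z)\ge a$ for all $z$; if $\mathcal F(\alpha,\psi)\ne\emptyset$, any $(\omega,z)\in\tilde{\mathcal F}(\alpha,\psi)$ gives $\alpha=\M_{*}(\rho,z)\ge a$, so $a\le\alpha_{-}(\psi)$. For (3): for $\alpha\in(\alpha_{-},\alpha_{+})=-t'(\R)$ (Theorem~\ref{thm:multifractalformalism}(\ref{enu:range-of-spectrum})), Theorem~\ref{thm:multifractalformalism}(\ref{enu:lowerbound}) gives $\dim_{H}\mathcal F(\alpha,\psi)=\dim_{H}\mathcal F^{\sharp}(\alpha,\psi)=-t^{*}(-\alpha)>0$, so by monotonicity of $\dim_{H}$ every set in the display above has this dimension; the asserted analytic properties of $s(\alpha)=-t^{*}(-\alpha)$ are exactly those in Theorem~\ref{thm:multifractalformalism}(\ref{enu:range-of-spectrum}) (equivalently Lemma~\ref{lem:legendre-transform-convex-conjugate}). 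For (2): $\mathcal F(\alpha,\psi)\ne\emptyset$ whenever $\alpha=\alpha_{0}$ or $\alpha\in(\alpha_{-},\alpha_{+})$ (Theorem~\ref{thm:multifractalformalism}(\ref{enu:range-of-spectrum})), so each $X\in\{R_{*},R,R^{*},H\}$ is non-empty there, while $X(\rho,\alpha)\subseteq\mathcal F^{\sharp}(\alpha,\psi)=\emptyset$ for $\alpha\notin-\overline{t'(\R)}=[\alpha_{-},\alpha_{+}]$ (Lemma~\ref{lem:t-star-properties}(\ref{enu:ourside-range-empty})); hence $\inf\{\alpha:X(\rho,\alpha)\ne\emptyset\}=\alpha_{-}(\psi)$ and $\sup\{\alpha:X(\rho,\alpha)\ne\emptyset\}=\alpha_{+}(\psi)$.

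Finally (4). One first shows $\deg(f_{i})\ge2$ for every $i$: if some $f_{j}$ had degree $1$ it would be loxodromic (Proposition~\ref{prop:expandingness-criteria}(\ref{enu:exp-implies-hyperbolicloxodromic})), and then analysing the functional equation $M_{\tau}\rho=\rho$ near the fixed points of $f_{j}$ --- using the separation condition and that $\rho$ is constant on Fatou components but varies precisely on $J(G)$ --- leads to a contradiction; this is the content of the structure theory in \cite{s11random}. Granting this, Proposition~\ref{lem:degenerate-via-zdunik} applied with $c_{i}=\log p_{i}$ yields the equivalence in (4a) together with $\lambda=-\gamma/\delta$. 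For (4b) assume $\alpha_{-}=\alpha_{+}$; then $\tilde\mu_{0}=\tilde\mu_{\gamma}$ and $t'$ is constant (Theorem~\ref{thm:multifractalformalism}(\ref{enu:range-of-spectrum}), Proposition~\ref{prop:characterisation-strict-convexity}), whence $t(\beta)=\delta-\alpha_{0}\beta$, $\alpha_{0}=\delta/\gamma$, and by the coboundary characterisation in Proposition~\ref{prop:characterisation-strict-convexity} (a continuous $v$ on $J(\tilde f)$ with $\delta\tilde\zeta=\gamma\tilde\psi+v-v\circ\tilde f$) the function $\tilde\psi-\alpha_{0}\tilde\zeta$ is a continuous coboundary for $\tilde f$; since $v$ is bounded and $S_{n}\tilde\zeta\to-\infty$ uniformly on $J(\tilde f)$ (expanding), $S_{n}\tilde\psi(x)/S_{n}\tilde\zeta(x)\to\alpha_{0}$ for every $x\in J(\tilde f)$, so $\tilde{\mathcal F}(\alpha_{0},\psi)=J(\tilde f)$ and $\mathcal F(\alpha_{0},\psi)=\mathcal F^{\sharp}(\alpha_{0},\psi)=\pi_{\Chat}(J(\tilde f))=J(G)$. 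By Lemma~\ref{lem:Q-via-ergodicsums} this also gives $\M_{*}(\rho,z)=\M^{*}(\rho,z)=\alpha_{0}$ for all $z\in J(G)$, so $R_{*}(\rho,\alpha_{0})=R(\rho,\alpha_{0})=R^{*}(\rho,\alpha_{0})=H(\rho,\alpha_{0})=J(G)$; moreover $\dim_{H}J(G)=\delta>0$ by Theorem~\ref{thm:multifractalformalism}(\ref{enu:lowerbound}); and for $\alpha\ne\alpha_{0}$ the display combined with $\mathcal F^{\sharp}(\alpha,\psi)=\emptyset$ (Lemma~\ref{lem:t-star-properties}(\ref{enu:ourside-range-empty}), as $-\overline{t'(\R)}=\{\alpha_{0}\}$) makes all six level sets empty. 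The main obstacle is the degree statement in (4a): every other step merely chains the inclusions above with the already-established Theorem~\ref{thm:multifractalformalism} and Proposition~\ref{lem:degenerate-via-zdunik}, whereas ruling out degree-one generators needs a genuine local study of $\rho$ near the fixed points of a loxodromic generator (or the external input of \cite{s11random}).
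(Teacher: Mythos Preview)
Your proposal is correct and follows essentially the same route as the paper: reduce everything to Theorem~\ref{thm:multifractalformalism} and Proposition~\ref{lem:degenerate-via-zdunik} via the sandwich $\mathcal{F}(\alpha,\psi)\subseteq X(\rho,\alpha)\subseteq\mathcal{F}^{\sharp}(\alpha,\psi)$ for $X\in\{R_{*},R,R^{*},H\}$, obtained from Lemmas~\ref{lem:hoelderexponent-via-Q} and~\ref{lem:Q-via-ergodicsums}, and for the degenerate case use the coboundary equation to force $S_{n}\tilde\psi/S_{n}\tilde\zeta\to\alpha_{0}$ everywhere on $J(\tilde f)$.

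Two small points where the paper is sharper than your sketch. First, for $\deg(f_{i})\ge2$: rather than a local analysis of $\rho$ near loxodromic fixed points, the paper observes that the separation condition together with $J(G)=\bigcup_{i}f_{i}^{-1}(J(G))$ forces the kernel Julia set $J_{\mathrm{ker}}(G)=\bigcap_{g\in G}g^{-1}(J(G))$ to be empty, and then \cite[Theorem 3.15 (21), Remark 3.18]{s11random} yields $\deg(f_{i})\ge2$ directly from the existence of a non-constant element of $U_{\tau}$. Second, the $a$-H\"older continuity of $\rho$ is quoted from \cite[Theorem 3.29]{S13Coop} rather than \cite[Theorem 3.15]{s11random}. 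Neither affects the structure of your argument.
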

\begin{proof}
By the separation condition and $J\left(G\right)=\bigcup_{i\in I}f_{i}^{-1}\left(J\left(G\right)\right)$
(see \cite{MR1767945}) we have that the kernel Julia set $J_{\textrm{ker}}(G):=\bigcap_{g\in G}g^{-1}\left(J\left(G\right)\right)$
of $G$ is empty. From this and the assumption that there exists a
non-constant unitary eigenfunction of $M_{\tau}$ in $C(\Chat)$ (\cite[Theorem 3.15 (21), Remark 3.18]{s11random}),
it follows that $\deg\left(f_{i}\right)\ge2$ for each $i\in I$.
Moreover, by \cite[Theorem 3.29]{S13Coop} there exists a constant
$a\in\left(0,1\right)$ such that $\rho:\Chat\rightarrow\C$ is $a$-H\"older
continuous. 

Since $f$ satisfies the separation condition, by passing to 
$(f_{\omega })_{\omega \in I^{k}}$ where $k$ is a sufficiently large positive integer, 
we may assume that $f$ satisfies
the open set condition. Also note that there exists $\gamma\in\R$
such that $\mathcal{P}\left(\gamma\tilde{\psi},\tilde{f}\right)=0$. 

Let $\alpha\in\R$. Recall that $H\left(\rho,\alpha\right)=R_{*}\left(\rho,\alpha\right)$
by Lemma \ref{lem:hoelderexponent-via-Q}. We only give the proof
of (2), (3) and (4) for the level set $R_{*}\left(\rho,\alpha\right)$.
The sets $R^{*}\left(\rho,\alpha\right)$ and $R\left(\rho,\alpha\right)$
can be considered in a similar fashion. The main task is to show that
$\mathcal{F}\left(\alpha,\psi\right)\subset R_{*}\left(\rho,\alpha\right)\subset\mathcal{F}^{\sharp}\left(\alpha,\psi\right)$.
Then the assertion in (2) follows from Theorem \ref{thm:multifractalformalism}
(1) and Lemma~\ref{lem:t-star-properties} (\ref{enu:ourside-range-empty}), and the assertions in (3) follow from Theorem \ref{thm:multifractalformalism}
(1), (2) and (3). The assertion in (4a) follows from Proposition \ref{lem:degenerate-via-zdunik}. To prove (4b) we observe that by 
the proof of Proposition~\ref{lem:degenerate-via-zdunik}
there exists a continuous function $\tilde{h}:J(\tilde{f})\rightarrow \R$  such that 
\[
\frac{S_n\tilde{\psi}(x)}{S_n\tilde{\zeta}(x)}=\frac{\delta S_n\tilde{\psi}(x)}{\gamma S_n\tilde{\psi}(x)+\tilde{h}(x)-\tilde{h}\circ \tilde{f}^{n+1}(x)},\quad \textrm{for every } x\in J(\tilde{f}),
\]
which shows that $\lim_{n \rightarrow \infty}{S_n\tilde{\psi}(x)}/{S_n\tilde{\zeta}(x)}=\delta / \gamma$ for every $ x\in J(\tilde{f})$. Now (4b) follows from Theorem \ref{thm:multifractalformalism} (1)
and (3) and Lemma~\ref{lem:t-star-properties} (\ref{enu:ourside-range-empty}). Finally, by combining with the fact that $\rho$ is $a$-H\"older
continuous, we obtain that $a\le\alpha_{-}\left(\psi\right)$. 

To complete the proof, we verify that $\mathcal{F}\left(\alpha,\psi\right)\subset R_{*}\left(\rho,\alpha\right)\subset\mathcal{F}^{\sharp}\left(\alpha,\psi\right)$
. To prove that $\mathcal{F}\left(\alpha,\psi\right)\subset R_{*}\left(\rho,\alpha\right)$,
let $z\in\mathcal{F}\left(\alpha,\psi\right)$. By definition of $\mathcal{F}\left(\alpha,\psi\right)$,
there exists $\omega\in I^{\N}$ such that $\left(\omega,z\right)\in\tilde{\mathcal{F}}\left(\alpha,\psi\right)$.
Hence, by Lemma \ref{lem:Q-via-ergodicsums}, we have that $\M\left(\rho,z\right)=\alpha$
and thus, $z\in R\left(\rho,\alpha\right)\subset R_{*}\left(\rho,\alpha\right)$.
To verify that $R_{*}\left(\rho,\alpha\right)\subset\mathcal{F}^{\sharp}\left(\alpha,\psi\right)$,
let $z\in R_{*}\left(\rho,\alpha\right)$, that is, $\M_{*}\left(\rho,z\right)=\alpha$.
Since \foreignlanguage{english}{$\rho$ is constant on each connected
component of $F\left(G\right)$ by \cite[Theorem 3.15 (1)]{s11random},
we have that $z\in J\left(G\right)$. By Proposition \ref{prop:basic-facts}
(\ref{enu:basic-facts-3}), there exists $\omega\in I^{\N}$ such
that $\left(\omega,z\right)\in J\left(\tilde{f}\right)$. }Lemma \ref{lem:Q-via-ergodicsums}
gives that $\liminf_{n}S_{n}\tilde{\psi}\left(\left(\omega,z\right)\right)\big/S_{n}\tilde{\zeta}\left(\left(\omega,z\right)\right)=\alpha$,
which implies that $\left(\omega,z\right)\in\tilde{\mathcal{F}}^{\sharp}\left(\alpha,\psi\right)$.
Hence, we have $z\in\mathcal{F}^{\sharp}\left(\alpha,\psi\right)$.
We have thus shown that $\mathcal{F}\left(\alpha,\psi\right)\subset R_{*}\left(\rho,\alpha\right)\subset\mathcal{F}^{\sharp}\left(\alpha,\psi\right)$
and the proof is complete. 
\end{proof}

\section{Examples\label{sec:Examples}}
We give some examples to which we can apply the main theorems. 
\begin{enumerate}
\item Let $f=\left(f_{i}\right)_{i\in I}\in\left(\Rat\right)^{I}$
be expanding and let \foreignlanguage{english}{\textup{$G=\left\langle f_{i}:i\in I\right\rangle $.}}
Suppose that $f$ satisfies the separation condition. Let $\left(p_{i}\right)_{i\in I}\in\left(0,1\right)^{I}$
be a probability vector, let $\tau:=\sum_{i\in I}p_{i}\delta_{f_{i}}$. 
Suppose that $G$ has at least two minimal sets. 
Here, we say that a non-empty compact subset $L$ of $\hat{\C }$ is a minimal 
set of $G$ if $L$ is minimal among the space 
$\{ K\mid K \mbox{ is a non-empty compact subset of }\hat{\C } 
\mbox{ and }g(K)\subset K \mbox{ for each } g\in G\} $ with respect to the inclusion. 
(Note that if $K$ is a non-empty compact subset $\hat{\C }$ such that 
$g(K)\subset K$ for each $g\in G$, then by Zorn's lemma, there exists a minimal 
set $L$ of $G$ with $L\subset K.$) 
Let $T_{L,\tau }:\hat{\C }\rightarrow [0,1]$ be the function of probability of tending to $L$ which is defined as 
$$ T_{L,\tau }(z)=\left(\otimes _{n=1}^{\infty }\tau \right)
\left( \{ \omega =(\omega _{1},\omega _{2},\ldots )\in \{ f_{i}\mid i\in I\} ^{\N } 
\mid d(\omega _{n}\cdots \omega _{1}(z), L)\rightarrow 0 \mbox{ as }n\rightarrow \infty \} \right).$$ 
Then by \cite{s11random} $T_{L,\tau }$ is a non-constant function belonging to $U_{\tau }.$ 
In fact, $M_{\tau }(T_{L,\tau })=T_{L,\tau }.$ 
Thus, we can apply Theorem~\ref{thm:mf-for-hoelderexponent} to 
$f$ and $\rho =T_{L,\tau }.$ The function 
$T_{L,\tau }$ can be regarded as a complex analogue of the devil's staircase and 
Lebesgue's singular functions (see \cite[Introduction]{s11random}). 
If each $f_{i}$ is a polynomial and $L=\{ \infty \}$ then $T_{\infty ,\tau }:=T_{\{ \infty \} ,\tau }$ is sometimes called 
a devil's coliseum. 

Since $(f_{\omega })_{\omega \in I^{k}}$, where $k$ is a large positive integer, 
satisfies the open set condition, all statements in Theorem~\ref{thm:multifractalformalism} and 
Proposition~\ref{lem:degenerate-via-zdunik} hold for $f.$   
\item Let $f_{1}$, $f_{2}$ be two polynomials with $\deg\left(f_{i}\right)\ge2$
for $i\in\left\{ 1,2\right\} $. Let $I=\left\{ 1,2\right\} $ and
$G=\left\langle f_{i}:i\in I\right\rangle $. Let $\left(p_{1},p_{2}\right)\in\left(0,1\right)^{2}$
with $p_{1}+p_{2}=1$ and let $\tau=p_{1}\delta_{f_{1}}+p_{2}\delta_{f_{2}}.$
Suppose that $G$ is hyperbolic, $P\left(G\right)\setminus\left\{ \infty\right\} $
is bounded in $\C$ and that $J\left(G\right)$ is disconnected. Then
$f=\left(f_{1},f_{2}\right)$ is expanding, $f$ satisfies the separating condition (see
\cite[Theorem 1.7]{MR2553369}) and the function of probability of
tending to infinity $T_{\infty,\tau}=T_{\{ \infty \} ,\tau }:\Chat\rightarrow\left[0,1\right]$ 
is a non-constant function belonging to $U_{\tau }.$ 
To this $f$ and $\rho =T_{\infty ,\tau }$ we can apply Theorem~\ref{thm:mf-for-hoelderexponent}. 
To see a concrete example, let $g_{1}(z)=z^{2}-1, g_{2}(z)=z^{2}/4$, $f_{1}=g_{1}\circ g_{1}, 
f_{2}=g_{2}\circ g_{2}.$ Then $f=(f_{1},f_{2})$ satisfies the above condition (see \cite[Example 6.2]{s11random}).  
\item Let $f_{1}$ be a hyperbolic polynomial with $\deg\left(f_{1}\right)\ge2$.
Suppose that $J\left(f_{1}\right)$ is connected. Let $h\in\Int\left(K\left(f_{1}\right)\right)$.
Let $d\in\N$ with $d\ge2$ and $\left(\deg f_{1},d\right)\neq\left(2,2\right)$.
Then there exists a constant $a_{0}>0$ such that for each $a\in\C$
with $0<\left|a\right|<a_{0}$, setting $f_{2,a}\left(z\right):=a\left(z-h\right)^{d}+h$,
we have that (i) $G_{a}:=\left\langle f_{1},f_{2,a}\right\rangle $
is hyperbolic, (ii) $f_{a}=\left(f_{1},f_{2,a}\right)$ satisfies
the separating condition and (iii) $P\left(\left\langle f_{1},f_{2,a}\right\rangle \right)\setminus\left\{ \infty\right\} $
is bounded in $\C$ (see \cite[Proposition 6.1]{s11random}). 
\item There are many examples of $f=\left(f_{i}\right)_{i\in I}\in\Rat^{I}$,
which satisfy the assumptions of Theorem \ref{thm:mf-for-hoelderexponent}
(see \cite[Propositions 6.3, 6.4 and 6.5]{s11random}). 
\end{enumerate}
Finally we give an important remark on the estimate of $\alpha _{-}$ and the 
non-differentiability of non-constant $\rho \in U_{\tau }.$ 
\begin{rem}
Let $f=\left(f_{i}\right)_{i\in I}\in\Rat^{I}$ and suppose that each
$f_{i}$ is a polynomial with $\deg\left(f_{i}\right)\ge2$. Under
the assumptions of Theorem \ref{thm:mf-for-hoelderexponent}, we have
by Theorem \ref{thm:mf-for-hoelderexponent} and \cite[Theorem 3.82]{s11random}
that  
\[
\alpha_{-}\left(\psi\right)\le\frac{-\sum_{i\in I}p_{i}\log p_{i}}{\sum_{i\in I}p_{i}\log\deg f_{i}+\int_{\Gamma^{\N}}\sum_{c}\mathcal{G}_{\gamma}\left(c\right)d\hat{\tau}\left(\gamma\right)}\le\alpha_{+}\left(\psi\right),
\]
where $\Gamma=\left\{ f_{i}:i\in I\right\} $, $\hat{\tau}=\bigotimes_{j=1}^{\infty}\tau$,
and $\mathcal{G}_{\gamma}$ denotes the Green's function of the basin
$A_{\infty,\gamma}$ of $\infty$ for the sequence $\gamma=\left(\gamma_{1},\gamma_{2},\dots\right)\in\Gamma^{\N}$
and $c$ runs over the critical points of $\gamma_{1}$ in $A_{\infty,\gamma}$.
For the details we refer to \cite[Theorem 3.82]{s11random}.

Moreover, in addition to the assumptions of Theorem \ref{thm:mf-for-hoelderexponent},
if each $f_{i}$ is a polynomial with $\deg(f_{i})\ge2$ and if (a)
$\sum_{i\in I}p_{i}\log\left(p_{i}\deg f_{i}\right)>0$ or (b) $P\left(G\right)\setminus\left\{ \infty\right\} $
is bounded in $\C$ or (c) $\card\left(I\right)=2$, then 
\[
\alpha_{-}\left(\psi\right)\le\frac{-\sum_{i\in I}p_{i}\log p_{i}}{\sum_{i\in I}p_{i}\log\deg f_{i}+\int_{\Gamma^{\N}}\sum_{c}\mathcal{G}_{\gamma}\left(c\right)d\hat{\tau}\left(\gamma\right)}<1.
\]
See \cite[Theorem 3.82]{s11random}. For the proof, we use potential
theory.
\end{rem}
\newcommand{\etalchar}[1]{$^{#1}$}
\def\cprime{$'$}
\providecommand{\bysame}{\leavevmode\hbox to3em{\hrulefill}\thinspace}
\providecommand{\MR}{\relax\ifhmode\unskip\space\fi MR }
\providecommand{\MRhref}[2]{%
  \href{http://www.ams.org/mathscinet-getitem?mr=#1}{#2}
}
\providecommand{\href}[2]{#2}

\end{document}